\pgfplotsset{compat=1.18}
\definecolor{red}{rgb}{1.00,0.00,0.00}
\definecolor{blue}{rgb}{0.00,0.00,0.63}
\definecolor{black}{rgb}{0.00,0.00,0.00}
\definecolor{purple}{rgb}{0.00,1.00,0.00}
\definecolor{pink}{rgb}{0.95,0.01,0.08}
\newtheorem{theorem}{Theorem}[section]
\newtheorem{lemma}{Lemma}[section]
\newtheorem{cor}[lemma]{Corollary}
\newtheorem{remark}{Remark}[section]
\newtheorem{defn}{Definition}[section]
\newtheorem{exmp}{Example}[section]
\def\bma#1\ema{{\allowdisplaybreaks\begin{aligned}#1\end{aligned}}}
\numberwithin{equation}{section}
\begin{document}

\author{Leyun Wu and Chilin Zhang}
\title{{\LARGE \textbf{Sharp Boundary Growth Rate Estimate of the Singular Equation $-\Delta u=u^{-\gamma}$ in a Critical Cone}}}


\date{}
\maketitle
\begin{abstract}
 For $\gamma>0$, we study the sharp boundary growth rate estimate of solutions to the Dirichlet problem of the singular Lane-Emden-Fowler equation
    \begin{equation*}
        -\Delta u=u^{-\gamma}
    \end{equation*}
    in a critical $C^{1,1}$ epigraphical cone $Cone_{\Sigma}$.
    
We show that the growth rate estimate exhibits fundamentally different behaviors in the following three cases: $1<\gamma<2$, $\gamma=2$, and $\gamma>2$. Moreover, we obtain the sharp growth rate estimate near the origin for $\gamma>1$. As a consequence, we show that when $Cone_{\Sigma}$ is a $C^{1,1}$ epigraphical cone, the additional solvability condition in \cite[Theorem 1.3]{GuLiZh25} is both sufficient and necessary to achieve the growth rate therein, thereby resolving the main open question left in that paper. With the growth rate estimate, we also derive the optimal modulus of continuity for solutions via the interior Schauder estimate.

    Our approach is to control the values of a solution $U(x)$ in the region $\Omega=Cone_{\Sigma}\cap B_{1}$ by introducing  a sequence of reference points $p_{k}=\frac{16^{1-k}}{2}\vec{e_{n}}$. From the Green function representation of $U(x)$, we derive  a discrete integral equation for the sequence $a_{k}=16^{k\phi}U(p_{k})$. Such a computation converts the original PDE problem into a recursion for a discrete integral equation, which can be effectively analyzed  using basic ODE methods.
\end{abstract}

 \noindent{\bf Keywords.} Singular elliptic equations, Boundary regularity, Green function, Integral equations, Boundary Harnack principle
\\
2020 {\bf MSC.} 35B40, 35J75, 45G05

\section{Introduction}
\subsection{Motivation}
Due to the negative power on the right-hand side, the singular Lane-Emden-Fowler equation
\begin{equation}\label{eq. main}
    -\Delta u=u^{-\gamma}
\end{equation}
exhibits singular behavior near the boundary, provided that the boundary data is zero. In order to study the global regularity of the solution, it is necessary first to analyze  the boundary growth rate. Once this is established,  the interior Schauder estimate can be applied to obtain the global H\"older regularity, as well as the precise growth rate of higher-order derivatives near the boundary.

Boundary regularity estimates of such  equations in  smooth domains $\mathcal{D}$ have been extensively studied,  in the literature a long time ago, beginning with the pioneering well-posedness result  by Fulks-Maybee \cite{FuMa60}. Subsequently, a systematic study of \eqref{eq. main} in smooth domains was carried out by Crandall-Rabinowitz-Tartar \cite{CrRaTa77}, as well as by Stuart \cite{St76}. Later, when   $\partial\mathcal{D}\in C^{1,1}$,   Lazer and McKenna \cite{LaMc91} first showed that
\begin{equation*}
    u\sim\phi_{1}^{\frac{2}{1+\gamma}},
\end{equation*}
where $\phi_{1}$ denotes the first eigenfunction of $-\Delta$ under the zero Dirichlet boundary condition. Later in Gui-Lin \cite{GuLi93}, the authors obtained the growth rate estimate for more general $\gamma$'s. They  showed that
\begin{equation}\label{eq. Gui-Lin estimate}
    u(x)\sim\left\{\begin{aligned}
        &\left(dist(x,\partial\mathcal{D})\right)^{\frac{2}{1+\gamma}},& \ \mbox{if}\ \gamma>1,\\ 
        &dist(x,\partial\mathcal{D})|\ln{dist(x,\partial\mathcal{D})}|^{\frac{1}{2}} ,& \ \mbox{if}\ \gamma=1,\\
        &dist(x,\partial\mathcal{D}),& \ \mbox{if}\ \gamma<1.
    \end{aligned}\right.
\end{equation}
In fact, the result in \cite{GuLi93} covers a more general class of singular equations with a degenerate weight:
\begin{equation*}
    -\Delta u=f(x)\cdot u^{-\gamma},\quad f\sim \left(dist(x,\partial\mathcal{D})\right)^{\alpha}.
\end{equation*}
See also del Pino \cite{dP92} and Gomes \cite{Go86} for related results on weighted singular equations.

The energy method is also widely used in the study of singular equations with an $L^{p}$ weight, that is, 
\begin{equation*}
    -\Delta u=f(x)\cdot u^{-\gamma},\quad f\in L^{p}(\mathcal{D}).
\end{equation*}
For such equations,  Boccardo-Orsina \cite{BoOr10} and Oliva-Petitta \cite{OlPe18} showed that the condition $\gamma<3-\frac{2}{p}$ is equivalent to the existence of a $H^{1}$ solution with zero Dirichlet boundary data.

When the domain is the half-space $\mathbb{R}^{n}_{+}$,  several interesting classification results for global solutions are known. For example, global solutions to \eqref{eq. main} in $\mathbb{R}^{n}_{+}$ that vanish on $\partial\mathbb{R}^{n}_{+}$ were classified by Montoro-Muglia-Sciunzi \cite{MoMuSc24a,MoMuSc24b}. A similar classification result for the fractional equation $(-\Delta)^{s}u=u^{-\gamma}$ was obtained by Guo-Wu \cite{GuWu25}.

If $\partial\mathcal{D}$ is less regular and possesses a $\frac{\pi}{2}$ angle, then the growth rate estimate and a Krylov-type boundary Harnack principle of \eqref{eq. main} was derived by Huang-Zhang \cite{HuZh24}. In a recent work Guo-Li-Zhang \cite{GuLiZh25}, the authors systematically studied the boundary regularity of solutions to \eqref{eq. main} in  general Lipschitz domains. Analogous to the interior and exterior ball method in \cite{GuLi93}, the authors in \cite{GuLiZh25} considered the limiting cone:
\begin{equation*}
    Cone_{\Sigma}=\Big\{x\in\mathbb{R}^{n}\setminus\{0\}:\frac{x}{|x|}\in\Sigma\Big\}\quad(\mbox{here }\Sigma\subseteq\mathbb{S}^{n-1}=\partial B_{1})
\end{equation*}
centered at a boundary point (without loss of generality, the origin).  The pairs $(\Sigma,\gamma)$ were classified into three categories and were treated accordingly. More precisely, they first define the ``frequency'' of a cone:
\begin{defn}[The ``frequency'' of a cone]\label{def. frequency}
    Let $\lambda=\lambda_{\Sigma}$ be the first eigenvalue of the Laplace-Beltrami operator $-\Delta_{\Sigma}$ in $\Sigma$ and let $E(\vec{\theta})\geq0$ be a corresponding eigenfunction. Define $\phi=\phi_{\Sigma}$ to be the positive solution to $\phi(\phi+n-2)=\lambda$, and we call $\phi$ the ``frequency'' of $Cone_{\Sigma}$. In fact, there exists a homogeneous positive harmonic function
    \begin{equation}\label{eq. H Sigma, homogeneous harmonic in a cone}
        H(x)=H_{\Sigma}(x)=|x|^{\phi}E(\frac{x}{|x|})
    \end{equation}
    supported in the cone $Cone_{\Sigma}$.
\end{defn}
Based on this notion, the pair  $(\Sigma,\gamma)$ is classified as:
\begin{itemize}
    \item subcritical, if $\frac{2}{1+\gamma}<\phi$;
    \item critical, if $\frac{2}{1+\gamma}=\phi$;
    \item supercritical, if $\frac{2}{1+\gamma}>\phi$.
\end{itemize}
\begin{exmp}[The critical cone in $\mathbb{R}^{2}$]\label{ex. 2d how large is the critical angle}
    It is easy to see that in $\mathbb{R}^{2}$, if
    \begin{equation*}
        \Sigma=\Big\{\sin{\theta}\vec{e_{1}}+\cos{\theta}\vec{e_{2}}:\theta\in[-\frac{1+\gamma}{4}\pi,\frac{1+\gamma}{4}\pi]\Big\}\subseteq\mathbb{S}^{1}=\partial B_{1},
    \end{equation*}
    then the pair $(\Sigma,\gamma)$ is critical, and $Cone_{\Sigma}$ is an angle in $\mathbb{R}^{2}$ of size $\frac{1+\gamma}{2}\pi$.
\end{exmp}
\begin{exmp}
    Moreover, we can check that if an angle is less than $\frac{1+\gamma}{2}\pi$, then the pair $(\Sigma,\gamma)$ is subcritical. If the angle is greater than $\frac{1+\gamma}{2}\pi$, then the pair $(\Sigma,\gamma)$ is supercritical.
\end{exmp}

If $\partial Cone_{\Sigma}$ is a Lipschitz graph, then the boundary growth rate of a solution to \eqref{eq. main} in $B_{1}\cap\partial Cone_{\Sigma}$ vanishing on $\partial Cone_{\Sigma}$ was established  in \cite[Theorem 1.2]{GuLiZh25}. Precisely, in $B_{1/2}\cap Cone_{\Sigma}$, we have 
\begin{equation*}
u(x)\leq 
\begin{cases}
    C|x|^{\frac{2}{1+\gamma}},&\quad\mbox{in the subcritical case},\\
    C|x|^{\phi}\ln(\frac{1}{|x|}),&\quad\mbox{in the critical case},\\
     C|x|^{\phi},&\quad\mbox{in the supercritical case}.
    \end{cases}
\end{equation*}
The exponents $\frac{2}{1+\gamma}$ in the subcritical case and $\phi$ in the supercritical case have been shown to be optimal, and solutions exhibit good behavior in these two cases. For instance, the exponent $\frac{2}{1+\gamma}$ is stable in the subcritical case,  does not depend on the precise geometry of  $Cone_{\Sigma}$. In the supercritical case, the exponent $\phi$ is less stable, but under an additional ``interior cone condition'', the solution can be shown to be  divisible by a harmonic function in a natural sense. 

However, the boundary behavior of a solution is more intricate in the critical case. For example, there is a gap in the growth rate in the critical case, since constructing a matching precise lower barrier is subtle and more involved.  In \cite{GuLiZh25}, a lower barrier of the form
\begin{equation}\label{eq. precise lower solution in GLZ}
    \underline{U}(x)=K(\ln{\frac{1}{|x|}})^{\phi/2}H_{\Sigma}(x)
\end{equation}
was constructed for a sufficiently small $K$ was constructed, implying that 
\begin{equation}\label{eq. lower bound general}
    u(t\vec{e_{n}})\geq C^{-1}t^{\phi}(\ln\frac{1}{t})^{\phi/2}.
\end{equation}

Then a natural question arises: ``What is the optimal growth rate of $u(x)$ if the pair $(\Sigma,\gamma)$ is critical?'' A partial answer was given in \cite[Theorem 1.3]{GuLiZh25} under an additional solvability assumption, namely,  that there exists a $w\in C(\overline{B_{1}\cap Cone_{\Sigma}})$ satisfying:
\begin{equation}\label{eq. growth rate special condition in critical case}
   \begin{cases}
        -\Delta w=H_{\Sigma}^{-\gamma}&\mbox{ in }\,\,Cone_{\Sigma}\cap B_{1},\\
        w=0&\mbox{ on }\,\,\partial(Cone_{\Sigma}\cap B_{1}).
    \end{cases}
\end{equation}
Under this solvability condition, one obtains the sharp upper bound
\begin{equation}\label{eq. upper bound with solvability assumption}
    u(x)\leq C|x|^{\phi}(\ln\frac{1}{|x|})^{\phi/2},
\end{equation}
which perfectly matches the lower bound estimate \eqref{eq. lower bound general}. Without such an assumption, the optimal growth rate remained unclear—this is precisely the main question addressed in the present paper.

In fact, the authors in \cite{GuLiZh25} suggested that one could first study the growth rate in an angle in $\mathbb{R}^{2}$, and they conjectured that \eqref{eq. upper bound with solvability assumption} holds if and only if $\gamma<2$ (equivalently, the angle is less than $\frac{3\pi}{2}$). In this paper, we are able to obtain the optimal growth rate of the solution not only for  a two-dimensional angle, but also under the assumption that $Cone_{\Sigma}$ is a $C^{1,1}$ epigraphical cone in $\mathbb{R}^{n}$ for any dimension $n\geq2$. As will be stated in Theorem~\ref{thm. three equivalent statements}, the solvability condition \eqref{eq. growth rate special condition in critical case} is in fact necessary to achieve the estimate \eqref{eq. upper bound with solvability assumption}, provided that $Cone_{\Sigma}$ is smooth away from the origin.

\subsection{Main results}
Now we state our main results. First, we set $f(x)\equiv1$ and study the simplified equation \eqref{eq. main}. We focus on a special solution $U(x)$ of the problem:
\begin{equation}\label{eq. special solution U(x)}
    \begin{cases}
        -\Delta U=U^{-\gamma}&\mbox{in }\,\,\Omega=Cone_{\Sigma}\cap B_{1},\\
        U=0&\mbox{on }\,\,\partial\Omega.
    \end{cases}
\end{equation}
Even for such a special solution $U(x)$, its growth rate near the boundary is highly non-trivial and extremely counterintuitive. We present below a full statement of the growth rate estimate of $U(x)$ everywhere in $\Omega$ for all $\gamma>1$.
\begin{theorem}\label{thm. asymptotic estimate}
    Assume that $Cone_{\Sigma}$ is a $C^{1,1}$ epigraphical cone with norms $(L,K)$ (see Definition~\ref{def. a good cone}) such that its ``frequency'' $\phi=\phi_{\Sigma}$ (see Definition~\ref{def. frequency}) satisfies the critical cone assumption $\phi=\frac{2}{1+\gamma}$, and let $U(x)$ be the solution of \eqref{eq. special solution U(x)}. We fix a defining function $\eta(x)$ as in Definition~\ref{def. defining function eta}, and denote $\Omega=Cone_{\Sigma}\cap B_{1}$. Then, we have the following sharp asymptotic estimates in three cases:
    \begin{itemize}
        \item[(1)] If $1<\gamma<2$, we call this situation ``critical-subcritical'', and
        \begin{equation*}
            U(x)\sim\left\{\begin{aligned}
                 &\left(dist(x,\partial\Omega)\right)^{\frac{2}{1+\gamma}}&\mbox{in }&\Omega\setminus B_{1/16},\\
                 &|x|^{\frac{2}{1+\gamma}}\eta^{\frac{2}{1+\gamma}}(x)\sim \left(dist(x,\partial\Omega)\right)^{\frac{2}{1+\gamma}}&\mbox{in }&\Omega\cap B_{1/16}\cap\Big\{\eta(x)\leq(\ln{\frac{1}{|x|}})^{\frac{1}{1-\gamma}}\Big\},\\
                 &|x|^{\frac{2}{1+\gamma}}(\ln{\frac{1}{|x|}})^{\frac{1}{1+\gamma}}\eta(x)&\mbox{in }&\Omega\cap B_{1/16}\cap\Big\{\eta(x)\geq(\ln{\frac{1}{|x|}})^{\frac{1}{1-\gamma}}\Big\}.
            \end{aligned}\right.
        \end{equation*}
        \item[(2)] If $\gamma=2$, we call this situation ``critical-critical'', and
        \begin{equation*}
            U(x)\sim\left\{\begin{aligned}
                 &\left(dist(x,\partial\Omega)\right)^{\frac{2}{3}}&\mbox{in }&\Omega\setminus B_{1/16},\\
                 &|x|^{\frac{2}{3}}\eta^{\frac{2}{3}}(x)\sim \left(dist(x,\partial\Omega)\right)^{\frac{2}{3}}&\mbox{in }&\Omega\cap B_{1/16}\cap\Big\{\eta(x)\leq\big[(\ln{\frac{1}{|x|}})\cdot(\ln{\ln{\frac{1}{|x|}}})\big]^{-1}\Big\},\\
                 &|x|^{\frac{2}{3}}\big[(\ln{\frac{1}{|x|}})\cdot(\ln{\ln{\frac{1}{|x|}}})\big]^{\frac{1}{3}}\eta(x)&\mbox{in }&\Omega\cap B_{1/16}\cap\Big\{\eta(x)\geq\big[(\ln{\frac{1}{|x|}})\cdot(\ln{\ln{\frac{1}{|x|}}})\big]^{-1}\Big\}.
            \end{aligned}\right.
        \end{equation*}
        \item[(3)] If $\gamma>2$, we call this situation ``critical-supercritical'', and
        \begin{equation*}
            U(x)\sim\left\{\begin{aligned}
                 &\left(dist(x,\partial\Omega)\right)^{\frac{2}{1+\gamma}}&\mbox{in }&\Omega\setminus B_{1/16},\\
                 &|x|^{\frac{2}{1+\gamma}}\eta^{\frac{2}{1+\gamma}}(x)\sim \left(dist(x,\partial\Omega)\right)^{\frac{2}{1+\gamma}}&\mbox{in }&\Omega\cap B_{1/16}\cap\Big\{\eta(x)\leq(\ln{\frac{1}{|x|}})^{-1}\Big\},\\
                 &|x|^{\frac{2}{1+\gamma}}(\ln{\frac{1}{|x|}})^{\frac{\gamma-1}{1+\gamma}}\eta(x)&\mbox{in }&\Omega\cap B_{1/16}\cap\Big\{\eta(x)\geq(\ln{\frac{1}{|x|}})^{-1}\Big\}.
            \end{aligned}\right.
        \end{equation*}
    \end{itemize}
    Here, we use ``$\sim$'' to denote equivalence, see Definition~\ref{def. convention of uniform and equivalence} for the precise conventions.
\end{theorem}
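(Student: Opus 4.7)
Following the abstract's roadmap, the strategy is to reduce the PDE to a discrete recursion on the axial sequence $a_{k} = 16^{k\phi}U(p_{k})$, solve the recursion by ODE-style comparison, and recover the full profile via the $C^{1,1}$ boundary Harnack principle. At each dyadic scale $16^{-k}$, the boundary Harnack principle compares $U(y)$ to $a_{k}|y|^{\phi}\eta(y/|y|)$ away from a narrow boundary strip, while inside the strip the classical Gui-Lin matching (applied to the rescaled equation, cf.\ \eqref{eq. Gui-Lin estimate}) forces $U(y) \asymp dist(y,\partial\Omega)^{2/(1+\gamma)}$. The two descriptions agree on the crossover surface $\eta \asymp a_{k}^{-1/(1-\phi)}$, so once $\{a_{k}\}$ is controlled the three-region profile of the theorem follows by substitution; the thresholds $(\ln\tfrac{1}{|x|})^{1/(1-\gamma)}$, $[(\ln\tfrac{1}{|x|})(\ln\ln\tfrac{1}{|x|})]^{-1}$, and $(\ln\tfrac{1}{|x|})^{-1}$ are exactly this crossover value $a_{k}^{-1/(1-\phi)}$ computed in the three cases below.

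To derive the recursion, we start from $U(p_{k}) = \int_{\Omega}G_{\Omega}(p_{k},y)\,U(y)^{-\gamma}\,dy$, split into dyadic annuli $A_{j} = \{|y|\sim 16^{-j}\}\cap\Omega$, and use the Kondratiev-type estimates for the Green function of a $C^{1,1}$ cone: $G_{\Omega}(p_{k},y) \sim H(y)H^{\ast}(p_{k})$ for $j > k$ and $G_{\Omega}(p_{k},y) \sim H(p_{k})H^{\ast}(y)$ for $j < k$, where $H^{\ast}(y) = |y|^{-(n-2+\phi)}E(y/|y|)$. Substituting the two-regime ansatz for $U(y)^{-\gamma}$ and integrating over each $A_{j}$, the critical identity $\phi\gamma = 2-\phi$ makes every power of $16^{-j}$ cancel exactly in the $j \le k$ regime, while the $j > k$ regime contributes only an exponentially decaying tail, giving
\begin{equation*}
a_{k} \;\asymp\; \sum_{j=1}^{k}\widetilde{J}(a_{j}),\qquad \widetilde{J}(a) \;=\; a^{-\gamma}\!\!\int_{\eta_{\mathrm{GL}}(a)}^{1}\!\!\eta^{1-\gamma}\,d\eta \,+\, \int_{0}^{\eta_{\mathrm{GL}}(a)}\!\!\eta^{\phi-1}\,d\eta,
\end{equation*}
with $\eta_{\mathrm{GL}}(a) = a^{-1/(1-\phi)}$. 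The asymptotic behavior of $\widetilde{J}$ as $a \to \infty$ is the source of the trichotomy: $\widetilde{J}(a) \asymp a^{-\gamma}$ for $1 < \gamma < 2$ (the $\eta$-integral converges), $\widetilde{J}(a) \asymp a^{-2}\ln a$ for $\gamma = 2$ (logarithmically divergent), and $\widetilde{J}(a) \asymp a^{-2/(\gamma-1)}$ for $\gamma > 2$ (dominated by the Gui-Lin cut-off).

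A power-logarithmic ansatz $a_{k} \asymp k^{\alpha}(\ln k)^{\beta}$ in $a_{k} \asymp \sum_{j\leq k}\widetilde{J}(a_{j})$ then yields $a_{k} \sim k^{1/(1+\gamma)}$, $(k\ln k)^{1/3}$, or $k^{(\gamma-1)/(1+\gamma)}$ respectively, and substituting back into the reduction above gives exactly the axial rates and crossover thresholds in the statement. To turn these heuristic bounds into rigorous pointwise estimates on $U$, one constructs super- and sub-solutions of the form $W_{\pm}(x) = c_{\pm}|x|^{\phi}L_{\gamma}(|x|)\eta(x) + C_{\pm}dist(x,\partial\Omega)^{2/(1+\gamma)}$, where $L_{\gamma}$ is the appropriate (iterated) logarithmic factor, verifies $-\Delta W_{\pm} \lessgtr W_{\pm}^{-\gamma}$ separately in the two regions $\{\eta \lessgtr \eta_{\gamma}(|x|)\}$, and invokes the comparison principle. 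The main technical obstacles are: the precise angular integration producing $\widetilde{J}(a)$, which demands a quantitative boundary Harnack principle with constants uniform across all dyadic scales---for which the $C^{1,1}$ hypothesis on $\partial Cone_{\Sigma}$ is essential; the smooth matching of the barriers across the curved crossover surface $\{\eta = \eta_{\gamma}(|x|)\}$; and especially the borderline case $\gamma = 2$, where the first-order guess $a_{k} \asymp k^{1/3}$ must be bootstrapped with a logarithmic perturbation to extract the hidden $(\ln\ln k)^{1/3}$ correction. This last bootstrap is what makes $\gamma = 2$ qualitatively different from both $\gamma < 2$ and $\gamma > 2$, and is the most delicate point of the argument.
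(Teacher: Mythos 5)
Your proposal correctly identifies the paper's core strategy: discretize via the reference points $p_{k}$, introduce $a_{k} = 16^{k\phi}U(p_{k})$, use the Green function representation to derive the discrete recursion $a_{k} \sim \sum_{j\leq k}\widetilde{J}(a_{j})$, observe that the trichotomy is governed by the convergence of the angular integral, and solve for $a_{k}$. The crossover value $\eta \sim a_{k}^{-1/(1-\phi)} = a_{k}^{(1+\gamma)/(1-\gamma)}$ and the three asymptotic regimes all match the paper. However, there are two genuine gaps where your proposal does not close the argument.

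\textbf{Gap 1: the two-regime pointwise estimate inside each annulus is not ``classical Gui-Lin.''} You assert that away from the boundary strip the boundary Harnack principle gives $U(y) \sim a_{k}|y|^{\phi}\eta(y)$, while inside the strip ``the classical Gui-Lin matching (applied to the rescaled equation)'' gives $U(y) \sim dist(y,\partial\Omega)^{2/(1+\gamma)}$, with crossover at $\eta \sim a_{k}^{(1+\gamma)/(1-\gamma)}$. But the classical Gui-Lin estimate is stated for a fixed $C^{1,1}$ domain with a solution of bounded $L^{\infty}$ norm, and it does not directly produce the $\lambda$-dependent crossover $t \sim \lambda^{(1+\gamma)/(1-\gamma)}$ when the rescaled $L^{\infty}$ norm $\lambda \sim a_{k}$ tends to infinity as $k \to \infty$ — which is exactly the situation here. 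Establishing the precise location of the crossover (including its dependence on $\lambda$) is the content of the paper's Theorem~\ref{thm. by-product} and Corollary~\ref{cor. estimate in a cone}, and is technically nontrivial: it relies on the classified translation-invariant 1D solutions $V_{\lambda}$ of Montoro--Muglia--Sciunzi, perturbed to rotation-invariant barriers $W_{\lambda,\pm R}$ adapted to the interior/exterior tangent balls of the $C^{1,1}$ boundary. Your proposal treats this as known; it is not, and it is arguably the single most involved step in the paper.

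\textbf{Gap 2: the barrier construction $W_{\pm}$ is both unnecessary and unverified.} You propose closing the argument by exhibiting super/sub-solutions $W_{\pm} = c_{\pm}|x|^{\phi}L_{\gamma}(|x|)\eta(x) + C_{\pm}dist(x,\partial\Omega)^{2/(1+\gamma)}$ and invoking the comparison principle. The paper does something simpler: once $a_{k}$'s growth \eqref{eq. a_k growth} is determined from the recursion, the pointwise estimate for $U$ in each annulus follows \emph{immediately} from Corollary~\ref{cor. estimate in a cone} by substitution — no barrier for $U$ is needed, and no matching across the crossover surface arises. Moreover, your barrier would be difficult to verify: for $\gamma > 2$ the logarithmic exponent $\beta = (\gamma-1)/(1+\gamma)$ in $L_{\gamma}$ does \emph{not} satisfy $\beta - 1 = -\beta\gamma$, so $-\Delta[L_{\gamma}H]$ does not balance $W^{-\gamma}$ in the unshaded region; the dominant contribution to $U^{-\gamma}$ comes from the shaded boundary strip, and a barrier of your proposed form does not obviously capture this. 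You yourself flag ``the smooth matching of the barriers across the curved crossover surface'' as a ``main technical obstacle,'' which is a sign that this route has not been carried through. The paper's route — derive the recursion rigorously via Corollary~\ref{cor. estimate in a cone} and the sharp Green function bounds, solve it via the substitutions $S_{k}$, $T_{k}$ (a discrete ODE argument rather than an ansatz), then substitute back — avoids this entirely and is the approach you should adopt.
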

\begin{remark}
    Figure 1 below illustrates the domain decomposition described in Theorem~\ref{thm. asymptotic estimate}. The first line of each case corresponds to the outer annulus, i.e., the region between the larger and smaller arcs. The second line of each case corresponds to the shadowed region. The third line of each case corresponds to the unshadowed region contained within the smaller arc.
\end{remark}

\begin{center}
\begin{tikzpicture}[scale=0.3][node distance = 0.1cm]
\draw[black,line width=1.0] (5.656,-5.656) arc (-45:225:8);
 \draw [thick] (-5.656,-5.656) -- (0,0);
  \draw [thick] (5.656,-5.656) -- (0,0);
    \draw[blue,line width=1.0] (3.182,-3.182) arc (-45:225:4.5);
        \draw[red,line width=1.2] [dashed](-4,-2) arc (280:315:7.5);
        \draw[red,line width=1.2] [dashed](4,-2) arc (260:225:7.5);
         \node [below=1cm, align=flush center,text width=8cm] at (0,-3.5)
        {$Figure$ 1 \fontsize{15}{15}\selectfont };
        \def\mypath{(0,0) -- (-3.182, -3.182) arc(225:210:4.5) -- (-4,-2) arc (280:315:7.5)}
       \pattern[pattern color=gray!80, pattern=north west lines]  \mypath;
          \def\mypath{(4,-2) arc (260:225:7.5) -- (0, 0)-- (3.182,-3.182)arc(315:330:4.5) }
       \pattern[pattern color=gray!80, pattern=north west lines]  \mypath;
\end{tikzpicture}
\end{center}

\begin{remark}
    The main differences among these three cases lie in the domain decomposition and  the power of $\ln{\frac{1}{|x|}}$. In the intermediate case $\gamma=2$, the situation becomes more intricate, as a dual-logarithmic term $\ln{\ln{\frac{1}{|x|}}}$ appears both in the estimates and in the domain decomposition.
\end{remark}

Next, we consider a general solution $u(x)$ in $\Omega$ and  establish a continuity estimate in $\Omega\cap B_{1/16}$. Our strategy is to compare $u(x)$ with the special solution $U(x)$. Once the growth rate of $u(x)$ is obtained, a straightforward application of the interior Schauder estimate yields the modulus of continuity.

\begin{theorem}\label{thm. modulus of continuity}
    Let $\gamma>1$. Assume that $Cone_{\Sigma}$ is a $C^{1,1}$ epigraphical cone with norms $(L,K)$ such that its ``frequency'' satisfies $\phi=\frac{2}{1+\gamma}$. Denote $\Omega=Cone_{\Sigma}\cap B_{1}$ and let $u(x)$ be a solution to
    \begin{equation}\label{eq. too general main equation}
        -\Delta u(x)=f(x)\cdot u^{-\gamma}(x)\quad\mbox{with }0<\lambda\leq f(x)\leq\Lambda
    \end{equation}
    in $\Omega$ which vanishes on $\partial Cone_{\Sigma}$. Then, there exists a constant $C=C(n,\gamma,\lambda,\Lambda,L,K)$, such that we have the following growth rate and continuity estimates in $\Omega\cap B_{1/16}$:
    \begin{itemize}
        \item[(1)] Let $U(x)$ be the special solution to \eqref{eq. special solution U(x)}, then in $\Omega\cap B_{1/16}$,
        \begin{equation*}
            C^{-1}\cdot U(x)\leq u(x)\leq C\|u\|_{L^{\infty}(\Omega)}\cdot U(x).
        \end{equation*}
        See estimate of $U(x)$ in Theorem~\ref{thm. asymptotic estimate}.
        \item[(2)] Let $\sigma(t)$ be defined for $t\leq1/8$ as:
        \begin{equation*}
            \sigma(t)=\left\{\begin{aligned}
                &t^{\frac{2}{1+\gamma}}(\ln{\frac{1}{t}})^{\frac{1}{1+\gamma}},&\mbox{if }&1<\gamma<2,\\
                &t^{\frac{2}{3}}(\ln{\frac{1}{t}})^{\frac{1}{3}}(\ln{\ln{\frac{1}{t}}})^{\frac{1}{3}},&\mbox{if }&\gamma=2,\\
                &t^{\frac{2}{1+\gamma}}(\ln{\frac{1}{t}})^{\frac{\gamma-1}{1+\gamma}},&\mbox{if }&\gamma>2.
            \end{aligned}\right.
        \end{equation*}
        Then, we have $u(x)\in C^{\sigma}(\Omega\cap B_{1/16})$, in the sense that for every $x,y\in\Omega\cap B_{1/16}$,
        \begin{equation*}
            |u(x)-u(y)|\leq C\|u\|_{L^{\infty}(\Omega)}\cdot\sigma(|x-y|).
        \end{equation*}
        Such a modulus of continuity is optimal, in the sense that if $u(x)\in C^{\widetilde{\sigma}}(\Omega\cap B_{1/16})$ for some other modulus of continuity $\widetilde{\sigma}$, then
        \begin{equation*}
            \sup_{t\leq1/8}\frac{\sigma(t)}{\widetilde{\sigma}(t)}<\infty.
        \end{equation*}
    \end{itemize}
\end{theorem}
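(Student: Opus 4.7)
The argument splits into three pieces: (a) the sandwich bound in (1) via a comparison principle with $U$, (b) the modulus of continuity in (2) via an interior gradient estimate of the form $|\nabla u(x)|\lesssim \|u\|_{L^\infty}U(x)/\mathrm{dist}(x,\partial\Omega)$ followed by integration along line segments, and (c) optimality by plugging $u=U$ and testing along the axis. Theorem~\ref{thm. asymptotic estimate} is the main input throughout: it both supplies the pointwise form of $U$ and gives matching two-sided bounds.

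\textbf{Part (1).} For the upper bound, set $v=MU$. Then $-\Delta v=MU^{-\gamma}=M^{1+\gamma}v^{-\gamma}$, so $v$ is a super-solution of $-\Delta\cdot\le\Lambda(\cdot)^{-\gamma}$ whenever $M\ge\Lambda^{1/(1+\gamma)}$. By interior Harnack, $\inf_{\partial B_{1/2}\cap Cone_\Sigma}U=:c_0>0$, so choosing $M=\max\{\Lambda^{1/(1+\gamma)},\,\|u\|_{L^\infty}/c_0\}$ gives $v\ge u$ on $\partial(\Omega\cap B_{1/2})$ (both vanish on the lateral piece). The usual comparison for singular semilinear equations---on $\{u>v\}$ one has $-\Delta(u-v)\le f(u^{-\gamma}-v^{-\gamma})\le 0$ by monotonicity of $t\mapsto t^{-\gamma}$---then gives $u\le v=MU$ throughout $\Omega\cap B_{1/2}$. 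The lower bound is symmetric with $v=\delta U$ for small $\delta$, using a universal interior positivity bound for $u$ (any positive solution of $-\Delta u\ge\lambda u^{-\gamma}$ is bounded below on $\partial B_{1/2}\cap \overline{Cone_\Sigma}$ by a constant depending only on $n,\gamma,\lambda$ and the cone geometry, by a standard Green's function argument).

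\textbf{Part (2).} Fix $x\in\Omega\cap B_{1/16}$ and set $d=\mathrm{dist}(x,\partial\Omega)$. Part (1) together with interior Harnack gives $u\sim u(x)\sim \|u\|_{L^\infty}U(x)$ throughout $B_{d/2}(x)\subset\Omega$, and also $u\ge C^{-1}U$, so $u^{-\gamma}\le CU(x)^{-\gamma}$. The interior gradient estimate for $-\Delta u=fu^{-\gamma}$ then yields
\begin{equation*}
|\nabla u(x)|\le \frac{C}{d}\bigl(\|u\|_{L^\infty(B_{d/2}(x))}+d^{2}U(x)^{-\gamma}\bigr).
\end{equation*}
A direct check using Theorem~\ref{thm. asymptotic estimate} shows the critical scaling $d^{2}U(x)^{-\gamma}\sim U(x)$ in all three cases (the near-boundary exponent $\alpha=\tfrac{2}{1+\gamma}$ satisfies $2-\alpha\gamma=\alpha$), so $|\nabla u(x)|\le C\|u\|_{L^\infty}U(x)/d(x)$. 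For two points $x,y\in\Omega\cap B_{1/16}$ with $r=|x-y|$ I distinguish:
(A) $\min(d(x),d(y))\ge 10r$: the segment $[x,y]$ lies in $\{d\ge 9r\}$, and integrating the gradient bound gives $|u(x)-u(y)|\le Cr\|u\|_{L^\infty}\sup_{z\in[x,y]}U(z)/d(z)$. Plugging in the explicit form of $U$ from Theorem~\ref{thm. asymptotic estimate} and using $\alpha<1$ together with $d(z)\ge r$, $|z|\ge r$, one verifies $rU(z)/d(z)\le C\sigma(r)$ in each region.
(B) $\min(d(x),d(y))<10r$: estimate brutally $|u(x)-u(y)|\le u(x)+u(y)\le C\|u\|_{L^\infty}(U(x)+U(y))$ and verify $U(z)\le C\sigma(r)$ at both points; either $d(z)\lesssim r$ so $U(z)\lesssim d(z)^{\alpha}\le r^{\alpha}\le \sigma(r)$, or $|z|\lesssim r$ so $U(z)\lesssim\sigma(|z|)\le\sigma(r)$ (the remaining scenario $d(z)\lesssim r$ and $|z|\gg r$ lies in the third region of Theorem~\ref{thm. asymptotic estimate}, where the factor $\eta$ combined with $d\sim|z|\eta$ absorbs the extra powers of $|z|$).

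\textbf{Optimality and main obstacle.} Take $u=U$; clearly $\|U\|_{L^\infty}<\infty$. If $U\in C^{\widetilde\sigma}(\Omega\cap B_{1/16})$, apply the inequality to $x=t\vec{e}_n$ and $y=s\vec{e}_n$, let $s\to 0^+$, and use continuity of $U$ at the vertex to obtain $U(t\vec{e}_n)\le C\widetilde\sigma(t)$; Theorem~\ref{thm. asymptotic estimate} provides the matching lower bound $U(t\vec{e}_n)\ge c\sigma(t)$ (the axis lies in the innermost region in each of the three cases), so $\sup_{t\le 1/8}\sigma(t)/\widetilde\sigma(t)\le C/c<\infty$. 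The main obstacle is the case-by-case integration of the gradient bound in Case~(A), especially for $\gamma=2$, where the doubly logarithmic threshold $\{\eta\sim[(\ln\tfrac{1}{|x|})(\ln\ln\tfrac{1}{|x|})]^{-1}\}$ creates a delicate transition between the two inner regions; one must verify that the gradient integrates to the same $\sigma$ on both sides of this dividing set. The critical-subcritical and critical-supercritical cases are analogous but involve a single logarithm and so are strictly easier.
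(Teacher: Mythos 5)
Your overall strategy is sound and the result comes out, but you take a genuinely different route from the paper in both parts, so let me compare.

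\textbf{Part (1).} The paper does not argue by direct comparison with $MU$ and $\delta U$. Instead it introduces the solution $U_f$ of $-\Delta U_f=fU_f^{-\gamma}$ with zero boundary data, sandwiches it between $U_\lambda$ and $U_\Lambda$ (which are scalar multiples of $U$), and then invokes the nonlinear boundary Harnack principle of \cite[Theorem 1.4]{GuLiZh25} (Lemma~\ref{lem. Harnack Comparable, general Lipschitz domain}) applied to the pair $(u,U_f)$ to produce the two-sided bound. Your barrier argument works and is more self-contained, but your stated justification for the lower barrier's boundary data is wrong as written: a positive solution is \emph{not} ``bounded below on $\partial B_{1/2}\cap\overline{Cone_\Sigma}$ by a constant'' --- it vanishes where that sphere meets $\partial Cone_\Sigma$. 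What you actually need on $\partial B_{1/2}\cap Cone_\Sigma$ is the comparison $u\ge cU$, which does hold: $u(x)\gtrsim \mathrm{dist}(x,\partial\Omega)^{\alpha}$ by the interior barrier (Lemma~\ref{lem. review : interior lower bound}), while $U(x)\lesssim \mathrm{dist}(x,\partial\Omega)^{\alpha}$ near that sphere by Lemma~\ref{lem. subcritical boundary estimate}, since $\|U\|_{L^\infty}\sim 1$. Replacing your ``constant lower bound'' with this pointwise $\gtrsim d^\alpha$ bound closes the gap.

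\textbf{Part (2).} The paper applies a Schauder estimate with singular right-hand side (\cite[Prop.\ 3.4]{GuLi93}) in each dyadic annulus $\Omega\cap(B_{2r}\setminus B_{r/4})$ to get an oscillation bound $|u(x)-u(y)|\lesssim\|u\|_{L^\infty}\sigma(|x-y|)$ for $x,y$ in the same shell, and then telescopes across shells. Your approach via the interior gradient bound $|\nabla u|\lesssim\|u\|_{L^\infty}U/d$ followed by segment integration is a valid alternative that achieves the same thing. Two minor corrections: the scaling identity you assert, $d^2 U^{-\gamma}\sim U$, holds with equality only where $U\sim d^\alpha$ (the inner two regions of Theorem~\ref{thm. asymptotic estimate}); in the outer region you only get $d^2U^{-\gamma}\lesssim U$, which is what the argument actually needs, so state $\lesssim$ rather than $\sim$. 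And in Case (B) you should note explicitly that if $\min(d(x),d(y))<10r$ then \emph{both} of $d(x),d(y)$ are $\lesssim r$ (by the triangle inequality with $|x-y|=r$), so there is no mixed case to worry about; otherwise the ``brutal'' bound would not be justified. Your parenthetical hint about the $\eta$ factor in the third region is the right calculation and does close that subcase.

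\textbf{Optimality.} Same idea as the paper: test at $x=t\vec e_n$ with $y\to 0$ and use the lower bound from Theorem~\ref{thm. asymptotic estimate} on the axis. No issues.

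In short: correct modulo the misstated lower-barrier boundary data in Part (1), but methodologically distinct from the paper --- you substitute an explicit barrier comparison and an integrated gradient estimate where the paper invokes the GuLiZh boundary Harnack and a singular-RHS Schauder estimate. Your route is arguably more elementary; the paper's is shorter because it leans on two heavier external lemmas.
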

\begin{remark}\label{rmk. weird estimate why?}
   It may seem unusual that in Theorem~\ref{thm. modulus of continuity} (1) we have the upper bound  $u(x)\leq C\|u\|_{L^{\infty}(\Omega)}\cdot U(x)$, but only $u(x)\geq C^{-1} U(x)$. It is important to emphasize that the seemingly ``natural'' estimate
   $u(x)\geq C^{-1}\|u\|_{L^{\infty}(\Omega)}\cdot U(x)$ is actually incorrect. 
   The reason for this subtle point will be explained in Remark~\ref{rmk. weird estimate reason} following Theorem~\ref{thm. ratio tends to 1}.
\end{remark}

Now we can address the open question left in \cite{GuLiZh25}. Specifically, if the cone $Cone_\Sigma$ is smooth away from the origin, then we show that \eqref{eq. growth rate special condition in critical case} is both necessary and sufficient for the estimate \eqref{eq. upper bound with solvability assumption} to hold.

\begin{theorem}\label{thm. three equivalent statements}
    Let $\gamma>0$. Assume that $Cone_{\Sigma}$ is a $C^{1,1}$ epigraphical cone with norms $(L,K)$ such that its ``frequency'' satisfies $\phi=\frac{2}{1+\gamma}$. Denote $\Omega=Cone_{\Sigma}\cap B_{1}$ and let $u(x)$ be a solution to \eqref{eq. too general main equation} in $\Omega$ which vanishes on $\partial Cone_{\Sigma}$. Then, the following three statements are equivalent:
    \begin{itemize}
        \item[(a)] $\gamma<2$;
        \item[(b)] The solvability condition \eqref{eq. growth rate special condition in critical case} holds;
        \item[(c)] There exists some constant $C=C(n,\gamma,L,K,\lambda,\Lambda)$ such that for all $r\leq1/2$,
        \begin{equation*}
            C^{-1}\cdot r^{\phi}(\ln{\frac{1}{r}})^{\phi/2}\leq\max_{Cone_{\Sigma}\cap B_{r}}u\leq C\|u\|_{L^{\infty}(\Omega)}\cdot r^{\phi}(\ln{\frac{1}{r}})^{\phi/2}.
        \end{equation*}
    \end{itemize}
\end{theorem}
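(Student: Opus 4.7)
The plan is to close the triangle in three steps: (i) (b) $\Rightarrow$ (c) from \cite[Theorem 1.3]{GuLiZh25} together with \eqref{eq. lower bound general}; (ii) (a) $\Rightarrow$ (b) via an explicit Green-function construction of $w$ whenever $\gamma<2$; and (iii) $\neg$(a) $\Rightarrow$ $\neg$(c), which uses Theorem~\ref{thm. asymptotic estimate} to exhibit a strict overshoot of $\max_{B_{r}}U$ beyond $r^{\phi}(\ln\frac{1}{r})^{\phi/2}$ in the critical-critical and critical-supercritical regimes.

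Step (i) is essentially immediate from the literature: the upper bound $u(x)\leq C|x|^{\phi}(\ln\frac{1}{|x|})^{\phi/2}$ in \cite[Theorem 1.3]{GuLiZh25} is conditional precisely on (b), and the matching lower bound in (c) follows from \eqref{eq. lower bound general}. For step (iii), I would evaluate Theorem~\ref{thm. asymptotic estimate} along the axis $x=t\vec{e_{n}}$, where $\eta(x)\sim 1$ so that we lie in the third subregion of each case. Using $\phi/2=1/(1+\gamma)$, the asymptotics give
\begin{equation*}
\max_{Cone_{\Sigma}\cap B_{r}}U\sim\begin{cases} r^{\phi}(\ln\frac{1}{r})^{\phi/2}(\ln\ln\frac{1}{r})^{1/3},& \gamma=2,\\ r^{\phi}(\ln\frac{1}{r})^{(\gamma-1)/(1+\gamma)},& \gamma>2, \end{cases}
\end{equation*}
both of which strictly dominate $r^{\phi}(\ln\frac{1}{r})^{\phi/2}$ as $r\to 0$. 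Applying the would-be upper bound of (c) to $u=U$ (which solves \eqref{eq. too general main equation} with $f\equiv 1$ and vanishes on $\partial Cone_{\Sigma}$, hence has finite $\|U\|_{L^{\infty}(\Omega)}$ by Theorem~\ref{thm. asymptotic estimate}) then yields the required contradiction.

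For step (ii), assuming $\gamma<2$, I would set
\begin{equation*}
w(x)=\int_{\Omega}G_{\Omega}(x,y)\,H_{\Sigma}(y)^{-\gamma}\,dy,
\end{equation*}
where $G_{\Omega}$ denotes the Green function of $\Omega$. Using the Carleson-type upper bound $G_{\Omega}(x,y)\leq C\,|x|^{\phi}\eta(x)\cdot|y|^{\phi}\eta(y)\cdot|x-y|^{2-n}$ valid near the origin, together with $H_{\Sigma}(y)^{-\gamma}\sim|y|^{-\phi\gamma}\eta(y)^{-\gamma}$, the integrand acquires a transverse factor $\eta(y)^{1-\gamma}$, integrable precisely when $\gamma<2$ (and trivially when $\gamma\leq 1$). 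Splitting the integration domain into $\{|y|\leq|x|/2\}$, $\{|x|/2<|y|<2|x|\}$, and $\{|y|\geq 2|x|\}$ and estimating each piece delivers $w\in L^{\infty}(\Omega)$ with $w(x)\to 0$ as $x\to 0$, which is the continuous solution required in (b).

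The main obstacle is step (ii): making these Green-function estimates fully rigorous requires sharp Carleson-type upper bounds on $G_{\Omega}$ in a $C^{1,1}$ epigraphical cone and careful bookkeeping of the transverse factor $\eta^{1-\gamma}$. The threshold $\gamma<2$ emerges as the precise condition for this transverse integral to be finite, exactly mirroring the dichotomy already embedded in Theorem~\ref{thm. asymptotic estimate}.
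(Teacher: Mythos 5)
Your triangle (i)(b$\Rightarrow$c), (ii)(a$\Rightarrow$b), (iii)($\neg$a$\Rightarrow\neg$c) matches the logical structure of the paper's proof, and steps (i) and (iii) are essentially identical to what the paper does. The one genuine divergence is step (ii), where you propose a direct Green-function representation
$w(x)=\int_\Omega G_\Omega(x,y)H_\Sigma(y)^{-\gamma}\,dy$,
whereas the paper instead constructs an explicit super-solution barrier $\overline{w}(x)=H(x)^\varepsilon$ with $\varepsilon=1-\gamma/2\in(0,1)$, verifies $-\Delta\overline{w}\gtrsim H^{-\gamma}$ using the lower bound $|\nabla H|\gtrsim|x|^{\phi-1}$, and invokes the maximum principle. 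The barrier approach is considerably lighter: it only needs the gradient lower bound and a one-line computation, rather than any estimate of the Green function itself.

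More importantly, your step (ii) as written contains a gap: the claimed ``Carleson-type upper bound'' $G_\Omega(x,y)\leq C|x|^\phi\eta(x)\cdot|y|^\phi\eta(y)\cdot|x-y|^{2-n}=C\,H(x)H(y)|x-y|^{2-n}$ is \emph{not} a valid upper bound for the Green function near the vertex. The paper's own Lemma~\ref{lem. green function for large y} (together with Lemma~\ref{lem. green function for small y}) gives, for $x$ on the axis and $|y|\geq2|x|$,
\begin{equation*}
G(x,y)\sim\frac{H(x)H(y)}{\max(|x|,|y|)^{2\phi}}\,|x-y|^{2-n},
\end{equation*}
which exceeds your claimed bound by the factor $\max(|x|,|y|)^{-2\phi}\gg1$ when $x,y\to0$. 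Since the discrepancy goes in the unfavorable direction (your candidate ``upper bound'' is \emph{smaller} than the true kernel), using it would spuriously make the integral appear more convergent than it is. The threshold $\gamma<2$ from the transverse factor $\eta^{1-\gamma}$ is indeed the right condition, and the Green-function-representation route can be made rigorous with the actual off-diagonal estimates of Section~\ref{sec. green function}; but you would also need to check that the radial integral converges for all $\gamma<2$ once the factor $\max(|x|,|y|)^{-2\phi}$ is reinstated, a bookkeeping exercise that the paper's barrier argument sidesteps entirely.
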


Finally, for two different solutions $u,v$ satisfying \eqref{eq. too general main equation} in $\Omega$ and vanishing on $\partial Cone_{\Sigma}$, we would like to analyze the behavior of the ratio $u/v$. By \cite[Theorem 1.4]{GuLiZh25} (or see Lemma~\ref{lem. Harnack Comparable, general Lipschitz domain}), it is  already known that such a ratio is bounded above and below in $\Omega\cap B_{1/16}$. However, boundedness of the ratio $u/v$ does not naturally imply its continuity up to the boundary, see an example in \cite[Theorem 1.5]{GuLiZh25}. This is a key difference from the classical boundary Harnack principle for linear equations. Therefore, in this paper, we also investigate the continuity of the ratio $u/v$, at least in the special case for the conical domain $\Omega$.

\begin{theorem}\label{thm. ratio tends to 1}
    Let $\gamma>1$ and let $u(x)$, $v(x)$ be two distinct solutions of \eqref{eq. too general main equation} in $\Omega$, both vanishing on $\partial Cone_{\Sigma}$ as in Theorem~\ref{thm. three equivalent statements}. Then in $\overline{\Omega\cap B_{1/16}}$, the ratio $\frac{u(x)}{v(x)}$ is continuous, bounded away from $0$ and $\infty$, and its limit at $B_{1/16}\cap\partial Cone_{\Sigma}$ is $1$ everywhere.
\end{theorem}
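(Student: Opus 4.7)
The plan is to combine interior continuity of $u/v$, a local Lazer--McKenna type expansion at regular boundary points $x_0\in\partial Cone_{\Sigma}\setminus\{0\}$, and a dyadic nonlinear comparison at the origin driven by the sharp asymptotics of Theorem~\ref{thm. asymptotic estimate}. Continuity and two-sided boundedness of $u/v$ on compact subsets of $\overline{\Omega\cap B_{1/16}}\setminus\partial Cone_{\Sigma}$ follow from interior elliptic regularity (applied to the strictly positive, smooth functions $u,v$) together with Theorem~\ref{thm. modulus of continuity}(1), so the task reduces to showing $u(x)/v(x)\to 1$ at every $x_0\in\partial Cone_{\Sigma}\cap\overline{B_{1/16}}$. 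At a regular $x_0\ne 0$, the boundary is $C^{1,1}$ at unit scale and I would establish the local expansion
\begin{equation*}
u(x)=c_\gamma\,f(x_0)^{1/(1+\gamma)}\,d(x,\partial Cone_{\Sigma})^{2/(1+\gamma)}\bigl(1+o(1)\bigr),\qquad c_\gamma=\left(\tfrac{(1+\gamma)^2}{2(\gamma-1)}\right)^{1/(1+\gamma)},
\end{equation*}
by building matched $\pm\varepsilon$ sub- and super-solutions of the form $\bigl(c_\gamma f(x_0)^{1/(1+\gamma)}\pm\varepsilon\bigr)d^{2/(1+\gamma)}$ in a flattening half-ball at $x_0$ and comparing against $u$ (resp.\ $v$), using Theorem~\ref{thm. modulus of continuity}(1) to control the data on the interior face of the chart. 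Because the leading constant depends only on $\gamma$ and $f(x_0)$, it is the same for both solutions and $u/v\to 1$ at $x_0$.

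The genuinely delicate case is the limit at the origin. Set $\alpha=\limsup_{x\to 0}u/v$ and $\beta=\liminf_{x\to 0}u/v$, both in $(0,\infty)$ by Theorem~\ref{thm. modulus of continuity}(1); it suffices to rule out $\alpha>1$, the symmetric case $\beta<1$ being handled identically after swapping $u$ and $v$. For $\mu\in(1,\alpha)$, the difference $w_\mu:=u-\mu v$ satisfies $-\Delta w_\mu=f(u^{-\gamma}-\mu v^{-\gamma})$, and on the set $\{w_\mu>0\}$ one has $u^{-\gamma}<\mu^{-\gamma}v^{-\gamma}<\mu v^{-\gamma}$ (since $\mu^{-\gamma}<1<\mu$), so $\Delta w_\mu>0$ and $w_\mu$ is strictly subharmonic there. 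My plan is to iterate this comparison dyadically along the reference points $p_k=\tfrac{16^{1-k}}{2}\vec{e_n}$ of the paper: at each scale $r=16^{-k}$ the maximum principle on the annular slab $\Omega\cap(B_r\setminus B_{r/16})$ converts into a one-step control of the rescaled quantity $b_k:=16^{k\phi}(u-\mu v)(p_k)$, and combining this with the discrete integral recursion governing $a_k=16^{k\phi}U(p_k)$ that underlies the proof of Theorem~\ref{thm. asymptotic estimate}, together with the sharp upper envelope obtained therein, the ``attracting'' character of that recursion should force $b_k\to 0$, contradicting $\alpha>\mu$.

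The main obstacle is precisely this iteration step. A one-shot maximum principle on $\Omega\cap B_{1/16}$ only bounds $\sup u/v$ by its supremum on the outer face $Cone_{\Sigma}\cap\partial B_{1/16}$, which is not a priori close to $1$, so an intrinsic positive bulge of $w_\mu$ near the origin is indistinguishable from one inherited from that outer face without additional input. The remedy I intend to deploy is to run the comparison at every dyadic scale in lockstep with the $a_k$-recursion that drives the logarithmic corrections in Theorem~\ref{thm. asymptotic estimate}; the stability of that recursion (which is what forces the precise $(\ln\tfrac1{|x|})^{1/(1+\gamma)}$, $(\ln\tfrac1{|x|}\ln\ln\tfrac1{|x|})^{1/3}$, or $(\ln\tfrac1{|x|})^{(\gamma-1)/(1+\gamma)}$ prefactors in the three regimes of Theorem~\ref{thm. asymptotic estimate}) is exactly what locks the leading constants of $u$ and $v$ onto each other, and hence yields $u/v\to 1$ at the origin. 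Once this limit is established, combining with the regular-point case gives continuity of $u/v$ on the full closure $\overline{\Omega\cap B_{1/16}}$.
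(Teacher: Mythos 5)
Your proposal has the right germ at the origin --- the observation that a suitable difference of the two solutions is subharmonic --- but it is missing the two steps that actually close the argument, and the dyadic iteration you sketch in their place is, as you yourself note, an obstacle rather than a lemma. The paper's proof at $x_0=0$ is a one-shot comparison, not an iteration. First, replace $u,v$ by $\widetilde u\ge\widetilde v$ solving the same singular equation in $\Omega$ with boundary data $\max\{u,v\}$ and $\min\{u,v\}$ respectively; by the maximum principle this sandwiches both $u$ and $v$, so it suffices to prove $\widetilde u/\widetilde v\to1$, and now the sign of $w:=\widetilde u-\widetilde v\ge0$ is globally controlled (your $w_\mu$ construction does not have this property, which is why you are forced to track a sign set $\{w_\mu>0\}$ and iterate). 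Second, $w$ is subharmonic (your computation with $\mu=1$), nonnegative, and vanishes on $\partial Cone_\Sigma\cap B_1$, so it is dominated by the harmonic function with its boundary data; Kemper's boundary Harnack principle (Lemma~\ref{lem. classical BHP}) then gives $w(x)\lesssim H(x)\sim|x|^\phi\eta(x)$. Third --- and this is the step your proposal never supplies --- Theorem~\ref{thm. asymptotic estimate} shows that in each of the three regimes $U$ carries a genuine extra logarithmic factor over $H$, so $H(x)/U(x)\to0$ as $x\to0$; combining with $\widetilde v\gtrsim U$ from Theorem~\ref{thm. modulus of continuity}(1) yields
\begin{equation*}
1\le\frac{\widetilde u(x)}{\widetilde v(x)}\le 1+\frac{w(x)}{\widetilde v(x)}=1+O\!\left(\frac{H(x)}{U(x)}\right)=1+o(1).
\end{equation*}
Your alternative of propagating a bound on $b_k:=16^{k\phi}(u-\mu v)(p_k)$ through the $a_k$-recursion is never formulated: you do not write the recursion $b_k$ satisfies, nor argue why it is ``attracting,'' and the heuristic appeal to the stability of the $a_k$-system does not substitute for that. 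Since the BHP-plus-growth-rate comparison above is immediate, the iteration is both unproven and unnecessary. For regular boundary points $x_0\ne0$, the paper does not re-derive a Lazer--McKenna expansion; it simply invokes the boundary-ratio result of \cite{GuLiZh25} (their Theorem 1.6(a)) applied to $(\widetilde u,\widetilde v)$. Your flattened-chart barrier argument there is plausible but, like the origin step, is left at the level of a plan.
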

\begin{remark}\label{rmk. weird estimate reason}
   We now clarify why the estimate  $u(x)\geq C^{-1}\|u\|_{L^{\infty}(\Omega)}\cdot U(x)$ is incorrect, as mentioned  in Remark~\ref{rmk. weird estimate why?}. To see this, we fix any $v(x)$ as in Theorem~\ref{thm. ratio tends to 1}, and obtain that $u(x)/v(x)\to1$ as $x\to 0$. It then follows that 
    \begin{equation*}
        u(x)\leq2v(x)\leq C\|v\|_{L^{\infty}(\Omega)}\cdot U(x)\mbox{ when }|x|\mbox{ is small}.
    \end{equation*}
    Therefore,  one should not expect that  $u(x)\geq C^{-1}\|u\|_{L^{\infty}(\Omega)}\cdot U(x)$ in $\Omega\cap B_{1/16}$, especially when $\|u\|_{L^{\infty}(\Omega)}$ is large.
\end{remark}

\subsection{Key ideas}
Less elaborate computation does yield some optimal growth rate estimates, but only under the solvability condition \eqref{eq. growth rate special condition in critical case}. Therefore, to obtain the optimal growth rate estimate in more general 
settings, the analysis must be much more precise. To this end, our strategy  is to study the integral form of \eqref{eq. main}, namely,
\begin{equation}\label{eq. key integral equation using Green function}
    U(x)=\int_{\Omega}G(x,y)U^{-\gamma}(y) dy,
\end{equation}
where  $G(x,y)$  is the positive Green  function of $\Omega$ for the operator $-\Delta$, vanishes on $\partial\Omega$, and satisfies 
\begin{equation}\label{eq. green function (intro)}
    G(x,y)=(-\Delta_{y})^{-1}\delta(x-y).
\end{equation}

There are two key difficulties in applying the integral equation \eqref{eq. key integral equation using Green function}:
\begin{itemize}
    \item First, unlike well-behaved domains such as the  ball $B_{1}$, the half space $\mathbb{R}^{n}_{+}$ or the entire space $\mathbb{R}^{n}$, the Green function in a conical region $\Omega=Cone_{\Sigma}\cap B_{1}$ can not be written explicitly, and it is not linearly growing near $\partial\Omega$.
    \item Second, it is difficult to determine in advance the accurate growth rate of $U(y)^{-\gamma}$ inside the  integral, while rough estimates of $U(y)$ alone are insufficient to obtain  the optimal estimate for the integral.
\end{itemize}

In Section~\ref{sec. green function} and  Section~\ref{sec. estimates in annuli}, we focus on overcoming these two main difficulties. To summarize in large, our approach is to ``discretize'' the integral equation problem \eqref{eq. key integral equation using Green function}. We select a ``geometric series'' of reference points, and treat the values of $U(x)$ at these points as ``unknowns''. Using these discrete ``unknowns'', we can estimate $U(y)^{-\gamma}$ in the most accurate way. Then in Section~\ref{sec. integral equation}, 
we estimate the integral \eqref{eq. key integral equation using Green function} by splitting the domain into a sequence of annuli, effectively turning the integral into a discrete sum.
 With this  ``discretization'', it turns out that to a certain extent, we analyze the equation \eqref{eq. key integral equation using Green function} by ``directly solving it''.

The method of analyzing the integral equation \eqref{eq. key integral equation using Green function} in this paper also applies when the pair $(\Sigma,\gamma)$ is subcritical or supercritical, as well as for $\gamma\leq1$.  We believe that  some interesting and counterintuitive results can also be obtained in these cases.

What follows is the organization of this paper:

In Section~\ref{sec. green function}, we address the first difficulty by carefully estimating the Green function \eqref{eq. green function (intro)}, obtaining  sharp bounds throughout $Cone_{\Sigma}$. Our approach is inspired by Bogdan \cite{Bo00}, see also Duraj-Wachtel \cite{DuWa18}. In particular, we apply Kemper's boundary Harnack principle \cite{Ke72} (or see Lemma~\ref{lem. classical BHP})  to study the growth rate of the Green function near the boundary.

In Section~\ref{sec. estimates in annuli}, we deal with the second difficulty. Precisely, we choose the  reference points $p_{k}=\frac{16^{1-k}}{2}\vec{e_{n}}$, and define a sequence of ``unknowns'' by
\begin{equation*}
    a_{k}=16^{k\phi}U(p_{k}).
\end{equation*}
Using the estimate from \cite[Theorem 1.4]{GuLiZh25} and the one-dimensional solutions of \eqref{eq. main} obtained in \cite{MoMuSc24a,MoMuSc24b}, we analyze  the value at each point in $\Omega$, and represent it in terms of  piecewise algebraic expressions involving the quantities $a_{k}$'s, see Corollary~\ref{cor. estimate in a cone}. Precisely speaking, we divide the bounded conic region $\Omega$ into annuli $\mathbb{A}_{k}$, and obtain asymptotic estimates in each annulus $\mathbb{A}_{k}$ using the ``unknown'' $a_{k}$. It is worth  noting that in this section, we also derive an improved and localized version of Gui-Lin's estimate \eqref{eq. Gui-Lin estimate} in the case $\gamma>1$, see Theorem~\ref{thm. by-product}. 
This refinement provides better control at a microscopic scale—particularly when the 
 $L^{\infty}$ norm of $u$ or the domain $\mathcal{D}$ is large,  our estimate in Theorem~\ref{thm. by-product} yields  more accurate behavior near the boundary.

In Section~\ref{sec. integral equation}, we set $x=p_{k}$ and compute $U(p_{k})$ using the integral equation \eqref{eq. key integral equation using Green function}. The computation of the right-hand side is delicate, as it requires estimating the integral over each annulus $\mathbb{A}_{k}$ and then adding them up. By combining the sharp Green function  estimates and the ``$a_{k}$-related estimate'' of $U(y)$, we obtain a discrete integral equation, such that the value $a_{k}$ is equivalent to some algebraic expression of the sequence $\{a_{k}\}_{k\geq1}$. We then obtain the sharp growth rate estimate of $a_{k}$ from the discrete integral equation.

The main results are then proven in Section~\ref{sec. proof of main results}. 
 For clarity, we also include Appendix~\ref{sec. appendix: notations} and Appendix~\ref{sec. appendix: review}, which summarize our notational conventions and collect several useful background results.

\section{Sharp estimates for the Green function \texorpdfstring{$G(x,y)$}{Lg}}\label{sec. green function}
We define the Green function for the operator $-\Delta$ in the domain $\Omega$ by $G(x,y)$, which satisfies the following equation in the distributional sense:
\begin{equation*}
    \begin{cases}
        -\Delta_{y}G(x,y)=\delta(x-y)&\mbox{in }\,\,\Omega,\\
        G(x,y)=0&\mbox{on }\,\,\partial\Omega.
    \end{cases}
\end{equation*}
Notice that $G(x,y)$ must be positive (including taking the value $+\infty$) in $\Omega$. Besides, it is a well-known fact that $G(x,y)=G(y,x)$.

In this section, we investigate the asymptotic behavior of 
 $G(x,y)$ near the boundary and near the pole, given that the pole $x$ lies on the positive $x_{n}$-axis. The estimates presented below (Lemma~\ref{lem. green function for small y} and Lemma~\ref{lem. green function for large y}) are sharp and are stated  in the equivalence sense ``$\sim$'' as defined in Definition~\ref{def. convention of uniform and equivalence}.

\begin{lemma}[Green function for small $y$]\label{lem. green function for small y}
    Let $x=h\vec{e_{n}}$ with $h\leq\frac{1}{4}$, and
    \begin{equation*}
        y=r\vec{\theta}.
    \end{equation*}
    with $\vec{\theta}\in\Sigma\subseteq\partial B_{1}$ and $r\leq2h$. Let $\eta(x)$ be the defining function as in Definition~\ref{def. defining function eta}. Then the Green function $G(x,y)$ satisfies:
    \begin{equation*}
        G(x,y)\sim\left\{\begin{aligned}
            &\ln{\frac{8h}{|x-y|}}\cdot (\frac{r}{h})^{\phi}\cdot\eta(y),&\mbox{if }n=2,\\
            &|x-y|^{2-n}(\frac{r}{h})^{\phi}\cdot\eta(y),&\mbox{if }n\geq3.
        \end{aligned}\right.
    \end{equation*}
\end{lemma}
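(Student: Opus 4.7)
The plan is to analyze $G(x,\cdot)$ as a positive harmonic function on $\Omega\setminus\{x\}$ that vanishes on $\partial\Omega$, and to extract the three claimed factors by matching it against three reference objects: the free-space fundamental solution near the pole $x$; the cone-homogeneous harmonic function $H_\Sigma(y)=|y|^\phi E(y/|y|)$ at intermediate scales; and the defining function $\eta(y)$ via Hopf's boundary lemma applied to $E$ on $\partial\Sigma$. The uniform upper bound $G(x,y)\leq c_n|x-y|^{2-n}$ in dimension $n\geq 3$ (respectively $c_2\ln\frac{8h}{|x-y|}$ in 2D, with the cutoff $8h$ chosen so that the comparison stays positive throughout $\Omega$) is immediate from the maximum principle applied to the difference with the fundamental solution.

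First I would establish an anchor estimate at $y_\ast:=(h/4)\vec{e_n}$, which is an interior point with $|x-y_\ast|=3h/4$, $\eta(y_\ast)\sim 1$, and $H_\Sigma(y_\ast)\sim h^\phi$. The claim is $G(x,y_\ast)\sim h^{2-n}$ (resp.\ $\sim 1$ in 2D). The upper bound is the uniform one; the matching lower bound comes from subtracting a small multiple of the fundamental solution on the interior ball $B_{h/2}(x)\subset\Omega$ and applying the maximum principle, once $G(x,\cdot)\geq c h^{2-n}$ on $\partial B_{h/2}(x)$ is produced by a standard interior barrier built from the exterior-cone condition at $\partial\Omega$.

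Next, for $y$ with $r:=|y|\leq h/8$, both $G(x,\cdot)$ and $H_\Sigma(\cdot)$ are positive harmonic in a pole-free neighborhood of the tip of the cone and vanish on $\partial Cone_\Sigma$. Iterating Kemper's boundary Harnack principle \cite{Ke72} across the dyadic annular regions $(B_{2^{1-k}h}\setminus B_{2^{-k-1}h})\cap Cone_\Sigma$ from $k=0$ down to the scale of $y$, and comparing the normalized ratio $G(x,\cdot)/H_\Sigma(\cdot)$ rather than $G(x,\cdot)$ directly, yields
\begin{equation*}
\frac{G(x,y)}{G(x,y_\ast)}\sim\frac{H_\Sigma(y)}{H_\Sigma(y_\ast)}.
\end{equation*}
The boundary behavior $E(y/|y|)\sim\eta(y)$ (a consequence of the $C^{1,1}$ regularity of $\partial\Sigma$ and Hopf's lemma for the eigenfunction $E$, which is built into the normalization of $\eta$ in Definition~\ref{def. defining function eta}) gives $H_\Sigma(y)\sim r^\phi\eta(y)$ and $H_\Sigma(y_\ast)\sim h^\phi$, producing the claimed formula. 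The remaining regime $h/8\leq r\leq 2h$ has $(r/h)^\phi\sim 1$; a short Harnack chain from $y$ to $y_\ast$ (interior case) or a one-step BHP (boundary case), combined with the near-pole estimate of Step~1 and the universal upper bound, recovers the formula up to the factor $|x-y|^{2-n}$ or $\ln\frac{8h}{|x-y|}$ that tracks the distance to the pole.

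The main technical obstacle is controlling the accumulation of constants across the $\log_2(h/r)$ scales in the BHP iteration: a naive application gives a multiplicative factor per scale, whose product would blow up. The resolution is exactly to iterate on the normalized ratio $G(x,\cdot)/H_\Sigma(\cdot)$, exploiting that $H_\Sigma$ is $\phi$-homogeneous and harmonic, so that the iteration collapses to finitely many scale-free applications of BHP whose constants depend only on $(n,\Sigma,L,K)$. A secondary subtlety is verifying $E(y/|y|)\sim\eta(y)$ uniformly up to $\partial\Sigma$, which is a pure $C^{1,1}$-boundary fact for the eigenfunction $E$ but must be checked against the precise normalization chosen for $\eta$.
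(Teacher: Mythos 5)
The near-pole anchor and the use of $H_\Sigma(y)\sim r^\phi\eta(y)$ are aligned with the paper, but the central step of your proposal --- comparing $G(x,\cdot)$ to $H_\Sigma(\cdot)$ near the cone tip --- contains a genuine gap. You propose to iterate Kemper's boundary Harnack principle (Lemma~\ref{lem. classical BHP}) across the dyadic annuli $(B_{2^{1-k}h}\setminus B_{2^{-k-1}h})\cap Cone_{\Sigma}$ from $k=0$ down to the scale of $y$. Each BHP application carries a multiplicative constant $C=C(n,L)$, so an honest chain of length $N\sim\log_2(h/r)$ accumulates a factor $C^N$, which is precisely the blowup you flag as the ``main technical obstacle.'' Your proposed cure --- iterating on $G/H_\Sigma$ and invoking the $\phi$-homogeneity of $H_\Sigma$ so that ``the iteration collapses to finitely many scale-free applications'' --- does not actually remove the accumulation. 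Homogeneity guarantees that the constant in each single annulus is scale-independent, but it does not cancel the product of constants across annuli. What you would need is an oscillation-decay improvement of BHP (a Fabes--Garofalo--Mar\'{\i}n-Malave style argument), and you do not invoke one.

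The resolution in the paper is simpler and does not iterate at all. Since $Cone_\Sigma=\{x_n>g(x')\}$ with $g$ Lipschitz and $g(0)=0$, the cone tip is itself a Lipschitz boundary point, so a \emph{single} application of Lemma~\ref{lem. classical BHP} --- in the region $\mathcal{R}=\Omega\cap B_{3h}\cap(B_{\varepsilon h/2}(x))^c$ with reference point $y_0\in\partial B_{\varepsilon h}(x)$ --- already controls $G(x,\cdot)/H(\cdot)$ uniformly on all of $\Omega\cap B_{2h}\cap(B_{\varepsilon h}(x))^c$, including arbitrarily close to the tip and to $\partial Cone_\Sigma$. The constant depends only on the Lipschitz modulus of the cone, after rescaling by $h$, because the rescaled geometry is fixed. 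You should replace the dyadic iteration with this one-shot application.

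A secondary remark on the anchor estimate: you obtain $G(x,y_*)\sim h^{2-n}$ by the universal upper bound together with ``a standard interior barrier built from the exterior-cone condition.'' The paper instead uses a reflection of the pole across the cone ($x^*=-h\vec{e_n}$), shows $|x^*-y|\sim|x-y|$ for every $y\in\partial\Omega$ via the cosine law and the Lipschitz bound on $\partial\Sigma$, and thereby gets \emph{two-sided} harmonic barriers for the compensation function $\widetilde{G}=c_n|x-\cdot|^{2-n}-G(x,\cdot)$. This is cleaner than the barrier route because the lower bound $G(x,y)\gtrsim h^{2-n}$ on a small sphere $\partial B_{\lambda h}(x)$ drops out directly once $\widetilde{G}$ is pinned between $C^{-1}|x^*-\cdot|^{2-n}$ and $C|x^*-\cdot|^{2-n}$; your version would need to make explicit that $\sup_{\partial\Omega}|x-\cdot|^{2-n}\lesssim h^{2-n}$ and then choose $\lambda$ small enough to beat that constant, which is workable but less sharp as stated.
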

\begin{proof}
    Let's decompose
    \begin{equation*}
        G(x,y)=\left\{\begin{aligned}
            &\frac{1}{2\pi}\ln{\frac{1}{|x-y|}}-\widetilde{G}(x,y),&\mbox{if }n=2,\\
            &\frac{|x-y|^{2-n}}{n(n-2)\omega_{n}}-\widetilde{G}(x,y),&\mbox{if }n\geq3,
        \end{aligned}\right.
    \end{equation*}
    where the compensation function $\widetilde{G}(x,y)$ satisfies
    \begin{equation*}
        -\Delta_{y}\widetilde{G}(x,y)=0\mbox{ in }\Omega,\quad\widetilde{G}(x,y)=\left\{\begin{aligned}
        &\frac{1}{2\pi}\ln{\frac{1}{|x-y|}},&\mbox{if }&n=2,\\
        &\frac{|x-y|^{2-n}}{n(n-2)\omega_{n}},&\mbox{if }&n\geq3,
    \end{aligned}\right.
    \end{equation*}
    for any $y \in \partial \Omega$.
    \begin{itemize}
        \item Step 1: Barriers for $\widetilde{G}(x,y)$. 
        
        Let $x^{*}$ be the ``reflection'' of $x$ on the negative $\vec{e_{1}}$ axis, defined by
        \begin{equation*}
            x^{*}=-h\vec{e_{1}}.
        \end{equation*}
         We claim that for any  $y\in\partial\Omega$, it holds that
        \begin{equation*}
            |x^{*}-y|\sim|x-y|.
        \end{equation*}
      To verify this,  we consider two cases. 
      
      {\it{Case 1.}} If $y\in Cone_{\Sigma}\cap\partial B_{1}$, then by the triangle inequality, we have
        \begin{equation*}
            \frac{3}{4}\leq1-h\leq|x^{*}-y|,\,\,|x-y|\leq1+h\leq\frac{5}{4}.
        \end{equation*}
        
       {\it{Case 2.}}  If $y\in B_{1}\cap \partial Cone_{\Sigma}$, we write
        \begin{equation*}
            y=r\vec{\theta},\quad\mbox{where }\vec{\theta}\in\partial\Sigma.
        \end{equation*}
        Since  $\partial Cone_{\Sigma}$ is a Lipschitz graph with norm $L$, we  have
        \begin{equation*}
            -\frac{L}{\sqrt{L^{2}+1}}\leq\cos{\angle(\vec{\theta},\vec{e_{n}})}\leq\frac{L}{\sqrt{L^{2}+1}},\quad\mbox{for all }\vec{\theta}\in\partial\Sigma.
        \end{equation*}
        By the cosine law, one has 
        \begin{equation*}
            \frac{|x^{*}-y|^{2}}{|x-y|^{2}}=\frac{r^{2}+h^{2}+2rh\cos{\angle(\vec{\theta},\vec{e_{n}})}}{r^{2}+h^{2}-2rh\cos{\angle(\vec{\theta},\vec{e_{n}})}}=:\frac{1+t\cos{\angle(\vec{\theta},\vec{e_{n}})}}{1-t\cos{\angle(\vec{\theta},\vec{e_{n}})}},
        \end{equation*}
        where for $0\leq r\leq1$,
        \begin{equation*}
            t:=\frac{2rh}{r^{2}+h^{2}}\in[0,1].
        \end{equation*}
        Using the bounds on  $\cos{\angle(\vec{\theta},\vec{e_{n}})}$, we see that there exists a universal constant $C >0$ such that
        \begin{equation}\label{eq. compensation bound 1}
            C^{-1}\leq\frac{|x^{*}-y|}{|x-y|}\leq C,\quad\mbox{for all }y\in\partial\Omega\mbox{ and }x=h\vec{e_{1}}\mbox{ with }0<h\leq\frac{1}{4}.
        \end{equation}
        With \eqref{eq. compensation bound 1}, we can find suitable harmonic upper and lower bounds for $\widetilde{G}(x,y)$. When $n=2$, we have
        \begin{equation}\label{eq. compensation bound 2}
            \frac{1}{2\pi}\ln{\frac{1}{|x_{*}-y|}}-C\leq\widetilde{G}(x,y)\leq\frac{1}{2\pi}\ln{\frac{1}{|x_{*}-y|}}+C,\quad\mbox{for some uniform }C.
        \end{equation}
        When $n\geq3$, we have
        \begin{equation}\label{eq. compensation bound 2, higher dimension}
            C^{-1}|x_{*}-y|^{2-n}\leq\widetilde{G}(x,y)\leq C|x_{*}-y|^{2-n},\quad\mbox{for some uniform }C.
        \end{equation}
        \item Step 2: Estimate of $G(x,y)$ near the pole. We consider a point $y\in\partial B_{\lambda h}(x)$, where $\lambda$ is sufficiently small. In this case, we have by \eqref{eq. compensation bound 2} or \eqref{eq. compensation bound 2, higher dimension} that
        \begin{equation*}
            \left\{\begin{aligned}
                &\frac{1}{2\pi}\ln{\frac{1}{(2+\lambda)h}}-C\leq\widetilde{G}(x,y)\leq\frac{1}{2\pi}\ln{\frac{1}{(2-\lambda)h}}+C,&\mbox{if }n=2,\\
                &C^{-1}\frac{\Big((2+\lambda)h\Big)^{2-n}}{n(n-2)\omega_{n}}\leq\widetilde{G}(x,y)\leq C\frac{\Big((2-\lambda)h\Big)^{2-n}}{n(n-2)\omega_{n}},&\mbox{if }n\geq3.
            \end{aligned}\right.
        \end{equation*}
        By choosing a larger constant $C$ (still uniform), we have for $y\in\partial B_{\lambda h}(x)$:
        \begin{equation*}
            \left\{\begin{aligned}
                &\frac{1}{2\pi}\ln{\frac{1}{2h}}-C\leq\widetilde{G}(x,y)\leq\frac{1}{2\pi}\ln{\frac{1}{2h}}+C,&\mbox{if }n=2,\\
                &C^{-1}\frac{(2h)^{2-n}}{n(n-2)\omega_{n}}\leq\widetilde{G}(x,y)\leq C\frac{(2h)^{2-n}}{n(n-2)\omega_{n}},&\mbox{if }n\geq3.
            \end{aligned}\right.
        \end{equation*}
        Since in the situation $y\in\partial B_{\lambda h}(x)$, one has
        \begin{equation*}
        \left\{
        \begin{aligned}
            &\frac{1}{2\pi}\ln{\frac{1}{|x-y|}}=\frac{1}{2\pi}\ln{\frac{1}{\lambda h}}&\mbox{for }n=2,\\
            &\frac{|x-y|^{2-n}}{n(n-2)\omega_{n}}=\frac{(\lambda h)^{2-n}}{n(n-2)\omega_{n}}&\mbox{for }n\geq3,
        \end{aligned}\right.
        \end{equation*}
        it holds for any $y\in\partial B_{\lambda h}(x)$ that 
        \begin{equation*}
            \left\{\begin{aligned}
                &\frac{1}{2\pi}\ln{\frac{2}{\lambda}}-C\leq G(x,y)\leq\frac{1}{2\pi}\ln{\frac{2}{\lambda}}+C,&\mbox{if }n=2,\\
                &C^{-1}\frac{(\lambda h)^{2-n}}{n(n-2)\omega_{n}}\leq G(x,y)\leq C\frac{(\lambda h)^{2-n}}{n(n-2)\omega_{n}},&\mbox{if }n\geq3.
            \end{aligned}\right.
        \end{equation*}
        In other words, there exists some universal $(\varepsilon,C)$ such that for any $y\in B_{\varepsilon h}(x)$, one has 
        \begin{equation}\label{eq. Green function near the pole}
            \left\{\begin{aligned}
            &\frac{C^{-1}}{2\pi}\ln{\frac{8h}{|x-y|}}\leq G(x,y)\leq\frac{C}{2\pi}\ln{\frac{8h}{|x-y|}},&\mbox{if }n=2,\\
            &C^{-1}\frac{|x-y|^{2-n}}{n(n-2)\omega_{n}}\leq G(x,y)\leq C\frac{|x-y|^{2-n}}{n(n-2)\omega_{n}},&\mbox{if }n\geq3.
            \end{aligned}\right.
        \end{equation}
        \item Step 3: Estimate of $G(x,y)$ in $\Omega\cap B_{2h}$. 
        
        As is shown in \eqref{eq. Green function near the pole}, for all $y\in 
        B_{\varepsilon h}(x)$, we obtain
        \begin{equation}\label{eq. Green function near the pole, repeat simpler}
           G(x,y)\sim \left\{\begin{aligned}
            &\frac{1}{2\pi}\ln{\frac{8h}{|x-y|}},&\mbox{if }n=2,\\
            &\frac{|x-y|^{2-n}}{n(n-2)\omega_{n}},&\mbox{if }n\geq3.
            \end{aligned}\right.
        \end{equation}
        It then remains to estimate $G(x,y)$ near the boundary. Note that $G(x,y)$ is a positive function vanishing at the boundary. Hence, we can apply Lemma~\ref{lem. classical BHP} to $G(x,y)$ near the boundary for all points $y$ staying away from the pole $x$. 
        
        To achieve this, we first recall that the homogeneous harmonic function $H(y)$ defined  in \eqref{eq. H Sigma, homogeneous harmonic in a cone} admits the following growth rate:
        \begin{equation*}
            H(y)=r^{\phi}E(\vec{\theta})\sim r^{\phi}\eta(y)=r^{\phi}\eta_{\Sigma}(\vec{\theta}),\,\, \mbox{for}\,\, y=r\vec{\theta}\in Cone_{\Sigma}.
        \end{equation*}
        Then, we apply the boundary Harnack principle (Lemma~\ref{lem. classical BHP}) to $G(x,y)$ and $H(y)$ in the region
        \begin{equation*}
            \mathcal{R}=\Omega\cap B_{3h}\cap \left(B_{\varepsilon h/2}(x)\right)^{c}.
        \end{equation*}
        In particular,  consider a reference point $y_{0}\in\partial B_{\varepsilon h}(x)$. Then for all $y\in\Omega\cap B_{2h}\cap \left(B_{\varepsilon h}(x)\right)^{c},$ we get 
        \begin{equation*}
            \frac{G(x,y)}{H(y)}\sim\frac{G(x,y_{0})}{H(y_{0})}\sim\left\{\begin{aligned}
                &\ln{\frac{8}{\varepsilon}}\cdot h^{-\phi},&\mbox{if }&n=2,\\
                &(\varepsilon h)^{2-n}\cdot h^{-\phi},&\mbox{if }&n\geq3,
            \end{aligned}\right.
        \end{equation*}
        where we have estimated $G(x,y_{0})$ using \eqref{eq. Green function near the pole, repeat simpler}. Consequently, for $y=r\vec{\theta}$ in $\Omega\cap B_{2h}\cap \left(B_{\varepsilon h}(x)\right)^{c}$, we have
        \begin{equation}\label{eq. Green function far from the pole, close to the corner}
           G(x,y)\sim \left\{\begin{aligned}
                &\ln{\frac{8}{\varepsilon}}\cdot(\frac{r}{h})^{\phi}\eta(y),&\mbox{if }&n=2,\\
                &(\varepsilon h)^{2-n}\cdot(\frac{r}{h})^{\phi}\eta(y),&\mbox{if }&n\geq3.
            \end{aligned}\right.
        \end{equation}
    \end{itemize}
    Finally, we combine \eqref{eq. Green function near the pole} and \eqref{eq. Green function far from the pole, close to the corner} and obtain the desired estimate.
\end{proof}

Next, we use the symmetry of the Green function, i.e. $G(x,y)=G(y,x)$ to study the behavior of $G(x,y)$ when $|y|\geq2h$.
\begin{lemma}[Green function for large $y$]\label{lem. green function for large y}
    For $x=h\vec{e_{1}}$ with $h\leq\frac{1}{8}$, and
    \begin{equation*}
        y=r\vec{\theta}\in\Omega\setminus\left(\Omega\cap B_{2h}\right),
    \end{equation*}
    we have
    \begin{equation*}
        G(x,y)\sim\frac{h^{\phi}}{r^{\phi+n-2}}\cdot(1-r)\cdot\eta(y).
    \end{equation*}
\end{lemma}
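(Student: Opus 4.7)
The plan is to exploit the symmetry $G(x,y)=G(y,x)$ and reduce to a ``same-scale'' Green function estimate by one application of the boundary Harnack principle (Lemma~\ref{lem. classical BHP}), in the same spirit as Step~3 of the proof of Lemma~\ref{lem. green function for small y}, with the additional twist that we must now extract a factor $(1-r)$ coming from the smooth spherical boundary $\partial B_{1}$.

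First I would fix $y=r\vec{\theta}\in\Omega$ with $r\ge 2h$ and view $G(\cdot,y)$ as a positive harmonic function on $\Omega\cap B_{3r/4}$ that vanishes on $\partial Cone_{\Sigma}\cap B_{3r/4}$; the pole $y$ is excluded since $|y|=r$. Comparing $G(\cdot,y)$ with the homogeneous harmonic function $H(\cdot)=|\cdot|^{\phi}E(\cdot/|\cdot|)$ via Kemper's BHP, using a reference point $z_{r}:=\tfrac{r}{2}\vec{e_{n}}$ (so that $H(z_{r})\sim r^{\phi}$ and $\mathrm{dist}(z_{r},\partial\Omega)\sim r$), should yield
$$G(x,y)=G(y,x)\;\sim\;\frac{H(x)}{H(z_{r})}\,G(y,z_{r})\;\sim\;\Bigl(\frac{h}{r}\Bigr)^{\phi}G(z_{r},y).$$
The lemma is thereby reduced to the same-scale estimate
$$G(z_{r},y)\;\sim\;r^{2-n}\,(1-r)\,\eta(y),$$
in which $r^{2-n}$ is the expected magnitude of the Green function between interior points at separation $\sim r$ and $(1-r)\eta(y)$ encodes the boundary behavior of $G(z_{r},\cdot)$ along the two pieces of $\partial\Omega$.

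To establish the same-scale estimate I would split into three regimes. (i) If $\mathrm{dist}(y,\partial\Omega)\gtrsim r$, the standard interior Green function estimate gives $G(z_{r},y)\sim r^{2-n}$, matching the claim since $\eta(y)\sim 1$ and $(1-r)\sim 1$ when $r\le 3/4$. (ii) If $y$ is close to $\partial Cone_{\Sigma}$ but $(1-r)\gtrsim 1$, BHP applied to $G(z_{r},\cdot)$ versus $H(\cdot)$ at scale $r$ (after rescaling by $r$) extracts the factor $\eta(y)/\eta(\vec{e_{n}})\sim\eta(y)$. (iii) If $y$ is close to $\partial B_{1}$, then $\partial B_{1}$ being smooth lets the Hopf lemma applied to $G(z_{r},\cdot)$ produce the linear vanishing factor $(1-r)$.

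The principal obstacle is the overlap regime in which $y$ approaches the ``corner'' $\partial B_{1}\cap\partial Cone_{\Sigma}$, where the two boundary behaviors must combine into the product $(1-r)\eta(y)$. To handle this I would rescale by $r$ around the nearest corner point: since $Cone_{\Sigma}$ is $C^{1,1}$, the rescaled domain $\Omega/r$ looks locally near the corner like a fixed Lipschitz wedge (a tilted half-space meeting the cone with scale-invariant norms $(L,K)$), and the classical Lipschitz BHP there guarantees that $G_{\Omega/r}$ is comparable to the product of distances to the two faces, i.e.\ to $(1-|y|)H(y)|y|^{-\phi}$ after undoing the rescaling. Patching the three regimes by a standard covering/overlap argument yields $G(z_{r},y)\sim r^{2-n}(1-r)\eta(y)$, which combined with Stage~1 gives the desired estimate.
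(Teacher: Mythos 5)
Your Stage~1 reduction is workable (it parallels the paper's use of the symmetry $G(x,y)=G(y,x)$ together with the pole at scale $r$), but your Stage~2, and in particular the corner analysis, has a genuine gap. You assert that ``the classical Lipschitz BHP there guarantees that $G_{\Omega/r}$ is comparable to the product of distances to the two faces.'' Kemper's BHP does not say this: it only asserts that \emph{two} positive harmonic functions vanishing on a Lipschitz boundary are mutually comparable; it does not identify either of them with any explicit model. In a generic Lipschitz wedge, positive harmonic functions vanishing on both faces decay at a rate set by the opening angle of the wedge, and that rate matches the product of the two distances only when the angle is $\pi/2$. Here the angle is indeed right (the faces of $Cone_{\Sigma}$ are radial, $\partial B_{1}$ is orthogonal to the radial direction), so the conclusion is true --- but proving it requires exhibiting a positive harmonic barrier with exactly the product behavior, and that construction is missing from your argument. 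Moreover, ``rescaling by $r$ around the nearest corner point'' is not meaningful as stated: when $y$ is close to the corner one has $r=|y|\approx 1$, so rescaling by $r$ does essentially nothing; a blow-up at the scale $\operatorname{dist}(y,\partial B_{1}\cap\partial Cone_{\Sigma})$ would be needed, and you have not set that up.

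The paper closes exactly this gap with a single global barrier: on $\Omega\setminus B_{1/6}$ it sets $\mu(r)=r^{2-n-2\phi}-1$ and, using the $\phi$-homogeneity of $H$, checks that $\widetilde H(y)=\mu(r)H(y)$ is harmonic, vanishes on $\partial\Omega\setminus B_{1/6}$, and satisfies $\widetilde H(y)\sim(1-r)\eta(y)$ there. One application of BHP comparing $G(x,\cdot)$ with $\widetilde H$ then yields $G(x,y)\sim h^{\phi}(1-r)\eta(y)$ uniformly on $\Omega\setminus B_{1/6}$, eliminating both the three-regime patching and the unjustified product-of-distances claim. For $r\le 1/4$ the paper argues by symmetry, applies Lemma~\ref{lem. green function for small y} at the auxiliary pole $z=r\vec{e_{n}}$, and runs BHP once in the annulus $Cone_{\Sigma}\cap(B_{11r/10}\setminus B_{9r/10})$. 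Your Stage~1 is a valid alternative for that piece, but you should either invoke Lemma~\ref{lem. green function for small y} directly or spell out the scale-propagation argument (BHP against the exactly homogeneous $H$ iterates with a uniform constant from scale $r$ down to scale $h$ by the scaling invariance of $Cone_{\Sigma}$); as written, the comparability at $x=h\vec{e_{n}}$ when $h\ll r$ is asserted rather than derived.
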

\begin{proof}
    We first consider the case that $r=|y|$ satisfies $r\leq\frac{1}{4}$. Let
    \begin{equation*}
        z=r\vec{e_{n}}.
    \end{equation*}
    Then, by the symmetry of the Green function, we apply Lemma~\ref{lem. green function for small y} with the pole being $z$, and obtain that
    \begin{equation*}
        G(x,z)=G(z,x)\sim\left\{\begin{aligned}
            &\ln{\frac{8r}{|x-z|}}\cdot(\frac{h}{r})^{\phi},&\mbox{if }&n=2\\
            &|x-z|^{2-n}(\frac{h}{r})^{\phi},&\mbox{if }&n\geq3
        \end{aligned}\right.\sim\frac{h^{\phi}}{r^{\phi+n-2}}.
    \end{equation*}
    Here, we have identified $|x-z|\sim r$ as $|z|=r\geq2h$. Then we apply Lemma~\ref{lem. classical BHP} in the annulus $Cone_{\Sigma}\cap(B_{\frac{11}{10}r}\setminus B_{\frac{9}{10}r})$, using $z$ as the the reference point yields 
    \begin{equation*}
        \frac{G(x,y)}{H(y)}\sim\frac{G(x,z)}{H(z)}\sim\frac{h^{\phi}}{r^{2\phi+n-2}},\quad\mbox{for }y\in Cone_{\Sigma}\cap(B_{\frac{11}{10}r}\setminus B_{\frac{9}{10}r}).
    \end{equation*}
    In other words, for $y\in Cone_{\Sigma}\cap(B_{\frac{11}{10}r}\setminus B_{\frac{9}{10}r})$,
    \begin{equation}\label{eq. Green function large radius, close to edge}
        G(x,y)\sim\frac{h^{\phi}}{r^{\phi+n-2}}\eta(y).
    \end{equation}

    On the other hand, if $r=|y|$ satisfies $r\geq\frac{1}{4}$, then we similarly obtain that
    \begin{equation*}
        G(x,\frac{1}{4}\vec{e_{n}})\sim h^{\phi}.
    \end{equation*}
    We then apply the boundary Harnack in the region $\Omega\setminus B_{1/6}$, but this time we use a different harmonic function to handle  the corner points in $\partial B_{1}\cap\partial Cone_{\Sigma}$. In fact, we set
    \begin{equation*}
        \mu(r)=r^{2-n-2\phi}-1,\quad r\in[\frac{1}{6},1],
    \end{equation*}
    and consider a harmonic function of the form:
    \begin{equation*}
        \widetilde{H}(y)=\mu(r)\cdot H(y),\quad y=r\vec{\theta}\in\Omega\setminus B_{1/6}.
    \end{equation*}
    In fact, recall that $H(y)$ is harmonic, and that $\partial_{r}H(y)=\frac{\phi}{r}H(y)$ from its $\phi$-homogeneity, we then have
    \begin{equation*}
        \Delta\widetilde{H}(y)=H(y)\Delta\mu(r)+2\frac{\phi}{r}H(y)\partial_{r}\mu(r)=H(y)\cdot\Big(\ddot{\mu}(r)+\frac{n-1+2\phi}{r}\dot{\mu}(r)\Big)=0,
    \end{equation*}
    showing that $\widetilde{H}(y)$ is indeed harmonic. Moreover, one sees that $\widetilde{H}(y)\sim(1-r)\cdot\eta(y)$ in $\Omega\setminus B_{1/6}$ and it vanishes on $\partial\Omega\setminus B_{1/6}$. By Lemma~\ref{lem. classical BHP}, we have 
    \begin{equation*}
        \frac{G(x,y)}{\widetilde{H}(y)}\sim\frac{G(x,\frac{1}{4}\vec{e_{n}})}{\widetilde{H}(\frac{1}{4}\vec{e_{n}})}\sim h^{\phi},\quad\mbox{for }y\in\Omega\setminus B_{1/6},
    \end{equation*}
    so
    \begin{equation}\label{eq. Green function, radius close to 1}
        G(x,y)\sim h^{\phi}\cdot(1-r)\cdot\eta(y).
    \end{equation}
    Combining \eqref{eq. Green function large radius, close to edge} with  \eqref{eq. Green function, radius close to 1} gives the desired estimate.
\end{proof}
\begin{remark}
    When $n=2$, one may replace the defining function $\eta(y)$ with $\cos{(\phi\theta)}$ in Lemma~\ref{lem. green function for small y} and Lemma~\ref{lem. green function for large y}, as mentioned in Example~\ref{ex. defining function n=2}.
\end{remark}

\section{Estimates of \texorpdfstring{$U(x)$}{Lg} in annuli}\label{sec. estimates in annuli}
\subsection{Reference points}
We consider a sequence of reference points
\begin{equation}\label{eq. reference points pk}
    p_{k}:=\frac{16^{1-k}}{2}\vec{e_{n}},\quad k\geq1.
\end{equation}
Let $U(x):\Omega\to\mathbb{R}_{+}$ be the solution to  \eqref{eq. special solution U(x)}. By means of  Lemma~\ref{lem. Harnack Comparable, general Lipschitz domain},  once the growth rate of $U(x)$ near the origin is  established, it follows that  any other solution of \eqref{eq. main} in $\Omega$ vanishing on $\partial Cone_{\Sigma}$ must have the same growth rate near the origin.

Next, we denote
\begin{equation}\label{eq. a_k definition}
    a_{k}=16^{k\phi}U(p_{k})=16^{\frac{2k}{1+\gamma}}U(p_{k}).
\end{equation}
We observe that $a_{k}$ is an increasing sequence, as stated below:
\begin{lemma}[Monotonicity of $a_{k}$]\label{lem. a_k is increasing}
    Let $U(x)$ and $a_{k}$ be defined as in \eqref{eq. special solution U(x)} and \eqref{eq. a_k definition}. Then $a_{1}\sim1$ and $a_{k+1}\geq a_{k}$ for every $k\geq1$.
\end{lemma}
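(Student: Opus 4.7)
The plan is to exploit the natural scale-invariance of the equation $-\Delta u = u^{-\gamma}$. Observe that the critical exponent $\phi = \frac{2}{1+\gamma}$ is exactly the one that makes the rescaling
\[
V(x) := 16^{\phi} U\!\left(\tfrac{x}{16}\right)
\]
solve the same equation: a direct computation gives $-\Delta V = V^{-\gamma}$ on the rescaled domain $Cone_{\Sigma}\cap B_{16}$. Since $p_{k+1} = p_k/16$, we have $V(p_k) = 16^{\phi} U(p_{k+1})$, so the inequality $a_{k+1}\geq a_k$ is equivalent to $V(p_k)\geq U(p_k)$, and we are reduced to comparing $V$ and $U$ on $\Omega$.

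For the comparison, I would restrict $V$ to $\Omega = Cone_{\Sigma}\cap B_1$ and examine the boundary values. On the lateral boundary $\partial Cone_{\Sigma}\cap\overline{B_1}$, the point $x/16$ still lies on $\partial Cone_{\Sigma}$ (since $Cone_{\Sigma}$ is a cone), so $U(x/16)=0$ and thus $V = 0 = U$. On the spherical part $\partial B_1\cap Cone_{\Sigma}$, the point $x/16$ lies well inside $\Omega$, so $V = 16^{\phi}U(x/16) > 0 = U$. Hence $V\geq U$ on $\partial\Omega$. The singular nonlinearity $t\mapsto t^{-\gamma}$ is decreasing, so the weak maximum principle applies to $w = U - V$: on the (open) set $\{w>0\}$ we have $-\Delta w = U^{-\gamma} - V^{-\gamma} < 0$, and $w\leq 0$ on the boundary of that set, forcing $w\leq 0$ everywhere. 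This yields $V\geq U$ on $\Omega$; evaluating at $p_k$ gives $16^{\phi}U(p_{k+1})\geq U(p_k)$, i.e., $a_{k+1}\geq a_k$.

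For the base case $a_1\sim 1$, note that $p_1 = \frac{1}{2}\vec{e_n}$ is an interior point of $\Omega$ whose distance to $\partial\Omega$ is bounded below by a universal constant depending only on $(n,L,K)$. The classical well-posedness theory for the singular Dirichlet problem (Fulks--Maybee, Crandall--Rabinowitz--Tartar) provides a unique positive bounded solution $U$, and standard interior Harnack-type estimates at the scale of $\mathrm{dist}(p_1,\partial\Omega)$ give both an upper and lower bound $C^{-1}\leq U(p_1)\leq C$ with $C=C(n,\gamma,L,K)$. Multiplying by the universal constant $16^{\phi}$ yields $a_1\sim 1$.

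The only (mild) obstacle is justifying the comparison principle in the singular setting, where $U^{-\gamma}$ blows up near $\partial\Omega$; but since both $U$ and $V$ are strictly positive in the open set $\Omega$ and the blow-up occurs only on $\partial\Omega$ where the sign condition $V\geq U$ is already verified, the argument on $\{U>V\}$ involves only bounded quantities and the standard weak maximum principle applies without difficulty.
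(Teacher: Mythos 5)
Your proof of the main inequality $a_{k+1}\ge a_k$ is essentially identical to the paper's: both introduce the invariant rescaling $V(x)=16^{\phi}U(x/16)$ (called $\widetilde U$ in the paper), note it solves the same equation precisely because $\phi=\tfrac{2}{1+\gamma}$, observe that $V\ge U$ on $\partial\Omega$, and invoke the comparison principle for the singular nonlinearity. The only point worth flagging is in the base case $a_1\sim 1$: your lower bound $U(p_1)\gtrsim 1$ via interior estimates is fine, but the upper bound $U(p_1)\lesssim 1$ does not follow from ``interior Harnack-type estimates'' alone, since Harnack controls the ratio $\max/\min$ on a compact subset but gives no absolute bound; one needs a global supersolution barrier (e.g.\ the solution of the same singular Dirichlet problem in a larger ball $B_2\supset\Omega$, which is bounded by a constant depending only on $n,\gamma$, and dominates $U$ on $\partial\Omega$). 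This is precisely the ``suitable barrier functions'' the paper alludes to; it is a minor, easily filled gap, and the paper omits the details too.
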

\begin{proof}
    The first estimate $a_{1}\sim1$ can be obtained by constructing suitable barrier functions, and we omit the details. To see that $a_{k+1}\geq a_{k}$, we compare $U(x)$ with its invariant rescaling: 
    \begin{equation*}
        \widetilde{U}(x)=16^{\phi}\cdot U(\frac{x}{16})=16^{\frac{2}{1+\gamma}}\cdot U(\frac{x}{16}).
    \end{equation*}
    Note that $\widetilde{U}(x)$ also satisfies \eqref{eq. main} in $\Omega$, while its boundary data on $\partial\Omega$ is larger than that of $U(x)$. Therefore, we have $\widetilde{U}(x)\geq U(x)$. Evaluating at  $x=p_{k}$ gives the desired inequality  $a_{k+1}\geq a_{k}$.
\end{proof}

Now, for each $k\geq1$, we define the $k$-th annulus by
\begin{equation}\label{eq. annulus construction}
    \mathbb{A}_{k}:=\Omega\cap\Big(B_{r_{k}}\setminus B_{r_{k+1}}\Big),\quad r_{k}=16^{1-k}.
\end{equation}
We note the following simple geometric facts:
\begin{itemize}
    \item The point $p_{k}$ lies in the  interior  of the annulus $\mathbb{A}_{k}$ with $dist(p_{k},\partial\mathbb{A}_{k})\sim16^{-k}$;
    \item The outer radius of $\mathbb{A}_{k}$ is exactly twice the distance from the origin to $p_{k}$;
    \item For every $k\geq2$, we have $\frac{15}{16}\leq dist(\mathbb{A}_{k},\partial B_{1})\leq1$.
\end{itemize}

The key difficulty in this section is to prove the following asymptotic estimate in a $C^{1,1}$ domain. It improves the boundary estimate of Gui-Lin \cite{GuLi93}, in particular, when the $L^{\infty}$ norm of $u$ is huge, then our estimate is more accurate than \cite{GuLi93}.
\begin{theorem}[A localized growth rate estimate in a $C^{1,1}$ domain]\label{thm. by-product}
    Assume that $\Gamma=\{x_{n}=g(x')\}$, such that $\|g(x')\|_{C^{0,1}(B_{20}')}\leq L$, and that
    \begin{equation*}
        -\frac{1}{K}+\sqrt{\frac{1}{K^{2}}-|x'|^{2}}\leq g(x')\leq\frac{1}{K}-\sqrt{\frac{1}{K^{2}}-|x'|^{2}}\quad\mbox{in }B_{20}'
    \end{equation*}
    for some sufficiently small uniform $K$. Suppose that $u$ is a solution to \eqref{eq. main} with $\gamma>1$ in $\mathcal{GC}_{20}(0)$ and $u$ vanishes on $\Gamma$. Denote $\lambda=u(\vec{e_{n}})$, which clearly holds that $\lambda\gtrsim1$ (see Lemma~\ref{lem. review : interior lower bound}), then for $t\in[0,1]$, we have
    \begin{equation*}
        u(t\vec{e_{n}})\sim\left\{\begin{aligned}
            &t^{\phi},&\mbox{if }&t\leq\lambda^{\frac{1+\gamma}{1-\gamma}},\\
            &\lambda t,&\mbox{if }&t\geq\lambda^{\frac{1+\gamma}{1-\gamma}}.
        \end{aligned}\right.
    \end{equation*}
\end{theorem}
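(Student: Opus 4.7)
\quad The strategy is to compare $u(t\vec e_n)$ pointwise with a one-parameter family of one-dimensional model solutions. Let $U_\lambda:[0,\infty)\to\mathbb R_+$ be the unique increasing solution of $-U_\lambda''=U_\lambda^{-\gamma}$ with $U_\lambda(0)=0$ and asymptotic slope $U_\lambda'(+\infty)=\lambda$, i.e.\ a member of the family of half-space profiles classified in \cite{MoMuSc24a,MoMuSc24b}. Because $\gamma>1$, the ODE admits the first integral
\begin{equation*}
(U_\lambda'(t))^2 \;=\; \frac{2\,U_\lambda(t)^{1-\gamma}}{\gamma-1}\;+\;\lambda^2,
\end{equation*}
from which one reads off existence and the two-regime asymptotics: when $U_\lambda^{1-\gamma}$ dominates $\lambda^2$, the profile coincides with the scale-invariant Gui-Lin solution $U_\lambda(t)\sim c_\gamma\, t^{\phi}$ with $\phi=2/(1+\gamma)$; when the reverse holds, the slope saturates and $U_\lambda(t)\sim \lambda t$. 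Equating the two expressions locates the crossover at $t\sim R:=\lambda^{(1+\gamma)/(1-\gamma)}$, which is the threshold in the theorem. It therefore suffices to prove the two-sided bound $u(t\vec e_n)\sim U_\lambda(t)$ uniformly in $t\in[0,1]$.

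\textbf{Barriers.}\quad Write $d(x):=\operatorname{dist}(x,\Gamma)$. The quantitative interior/exterior ball hypothesis makes $d$ of class $C^{1,1}$ in a uniform neighborhood of $\Gamma$, with $|\nabla d|\equiv 1$ and $|\Delta d|\le C(n)K$. Direct computation gives
\begin{equation*}
-\Delta\bigl(U_\mu(d(x))\bigr)\;=\;U_\mu(d)^{-\gamma}\;-\;U_\mu'(d)\,\Delta d,
\end{equation*}
so $U_\mu(d)$ differs from an exact solution by the curvature correction $U_\mu'(d)\,\Delta d$, of size at most $CK\,U_\mu'(d)$. Scaling by $(1\pm\delta)$ introduces a one-signed slack of order $\delta\,U_\mu(d)^{-\gamma}$, which absorbs the curvature error provided $K\,U_\mu'\,U_\mu^{\gamma}\lesssim \delta$. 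From the first integral, $U_\mu' U_\mu^\gamma\sim d$ in the Gui-Lin regime $d\lesssim R$ (hence negligible once $K$ is small), whereas in the linear regime $d\gtrsim R$ this quantity is $\sim \mu^{1+\gamma}d^\gamma=\lambda^{1+\gamma}d^\gamma$, which need not be small --- handling this is precisely the main obstacle below.

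\textbf{Comparison and conclusion.}\quad Fix absolute constants $C^{-}<1<C^{+}$ and use $(1+\delta)U_{C^{+}\lambda}(d(x))$ and $(1-\delta)U_{C^{-}\lambda}(d(x))$ as super- and sub-solution. Both vanish on $\Gamma$, matching $u=0$. On the remaining boundary of a comparison subdomain, the Harnack-comparability of solutions vanishing on $\Gamma$ (Lemma~\ref{lem. Harnack Comparable, general Lipschitz domain}) combined with the fixed value $u(\vec e_n)=\lambda$ and the interior lower bound $\lambda\gtrsim 1$ (Lemma~\ref{lem. review : interior lower bound}) selects $C^{\pm}$ so that the ordering is correct on the full boundary. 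The maximum principle then yields
\begin{equation*}
U_{C^{-}\lambda}(d(x))\;\lesssim\;u(x)\;\lesssim\;U_{C^{+}\lambda}(d(x))\qquad\text{in }\ \mathcal{GC}_{1}(0).
\end{equation*}
Restricting to $x=t\vec e_n$, $t\in[0,1]$, and using $d(t\vec e_n)\sim t$ (because $g(0)=0$ and $\|g\|_{C^{0,1}}\le L$), the two-regime asymptotics of $U_\lambda$ from the first paragraph give the piecewise statement of Theorem~\ref{thm. by-product}.

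\textbf{Main obstacle.}\quad The hard step is validating the barrier in the linear regime $d\gtrsim R$, where $U_\mu'\sim\mu\sim\lambda$ is large and the curvature error $K U_\mu'\sim K\lambda$ is not dominated by the singular term $U_\mu(d)^{-\gamma}\sim(\lambda d)^{-\gamma}$. The remedy is to split the comparison into two pieces. On the boundary strip $\{d\le R\}$ the barrier argument of the previous paragraph applies with room to spare, since $K\,U_\mu' U_\mu^\gamma\sim K d\ll 1$. On the interior region $\{d\ge R\}$ the right-hand side $u^{-\gamma}$ is tiny (because $u\sim\lambda d$ is large) so the equation is nearly harmonic, and $u$ can be sandwiched between two linear functions $C^{\pm}\lambda x_n$ whose values on the interface $\{d=R\}$ are tuned to match $U_\mu(R)$. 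Pasting the two comparisons across the interface is again an invocation of Lemma~\ref{lem. Harnack Comparable, general Lipschitz domain}, and the hypothesis $\lambda\gtrsim 1$ guarantees $R\le 1$ so that the interface sits inside the comparison domain.
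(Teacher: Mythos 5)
Your plan is in the right spirit — compare $u$ with the one-parameter family of half-space profiles $V_\lambda$ (your $U_\lambda$), and you correctly isolate the genuine obstacle: in the linear regime $d\gtrsim R=\lambda^{(1+\gamma)/(1-\gamma)}$ the curvature error $K\,U_\mu'(d)\sim K\lambda$ overwhelms the slack $\delta\,U_\mu(d)^{-\gamma}\sim\delta(\lambda d)^{-\gamma}$, so the naive barrier $U_\mu(d(x))$ breaks down for large $\lambda$. However, your proposed remedy does not close the argument. First, the claim that on $\{d\ge R\}$ the solution ``can be sandwiched between two linear functions'' is circular: you infer that $u^{-\gamma}$ is tiny from $u\sim\lambda d$, but this is precisely what is to be proved, and in particular the \emph{upper} half $u\lesssim\lambda d$ is not a consequence of approximate harmonicity — $u$ is strictly superharmonic, and superharmonic functions that vanish on $\Gamma$ can be arbitrarily steep there, so a linear upper barrier needs a separate global a priori bound $u\lesssim\lambda$ (in the paper this comes from the paraboloid supersolution $P(x)=M\lambda+(100n-|x|^2)$ together with Lemma~\ref{lem. u is near the boundary}-type Harnack inequalities, not from ``nearly harmonic''). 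Second, the pasting across the interface $\{d=R\}$ ``by Lemma~\ref{lem. Harnack Comparable, general Lipschitz domain}'' is not a legitimate invocation: that lemma compares two positive solutions of \eqref{eq. main} in nested grounded cylinders; it is not a device for gluing two partial comparisons across an internal interface, and the matching conditions on $\{d=R\}$ are left entirely unresolved.

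The paper avoids this obstacle by a structurally different route. Instead of running the barrier $V_\lambda(d(x))$ directly in the Lipschitz domain, it (i) introduces the rotation-invariant model $W_{\lambda,\pm R}$ (Lemma~\ref{lem. rotational}) on exact tangential balls of radius $R$, where the curvature error in the ODE enters as a small coefficient $\frac{1}{t\pm R}W'$ and is absorbed by a quadratic correction $\pm\frac{\lambda}{M^2}\big[(t-\lambda^{(1+\gamma)/(1-\gamma)})_+\big]^2$ added to $M^{\pm 1}V_\lambda$ — this is exactly the device that kills the $K\lambda$ error in the linear regime; (ii) constructs exact auxiliary solutions $\overline u,\underline u$ on the two tangential-ball domains $\mathcal D_1,\mathcal D_2$ with boundary data that sandwiches $u$, so the comparison with $u$ is a clean maximum principle step involving only exact solutions; (iii) proves the one-point statement $\overline u(\vec e_n)\sim\underline u(\vec e_n)\sim\lambda$, and then transfers the one-dimensional estimate from $W_{\lambda,\pm R}$ to $\overline u,\underline u$ via the boundary Harnack one-point control (Corollary~\ref{cor. nonlinear boundary harnack accurate}) — this is where Lemma~\ref{lem. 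Harnack Comparable, general Lipschitz domain} is legitimately invoked, as a comparison of two full solutions of \eqref{eq. main} sharing a zero boundary condition. If you want to salvage your approach you would need to replace the ``nearly harmonic'' hand-wave by an explicit supersolution yielding $u\lesssim\lambda$, and replace the interface pasting by a global comparison argument of the type (ii)--(iii) above.
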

Before proving Theorem~\ref{thm. by-product}, let's first state its corollary below. 
Corollary~\ref{cor. estimate in a cone} provides an asymptotic estimate for 
$U^{-\gamma}$  in terms of the sequence $\{a_{k}\}_{k\geq1}$ (except in $\mathbb{A}_{1}$). This result is crucial for computing the right-hand side of the integral equation  \eqref{eq. key integral equation using Green function}.

\begin{cor}\label{cor. estimate in a cone}
    Let $Cone_{\Sigma}$ be a $C^{1,1}$ epigraphical cone with norms $(L,K)$, and let the annuli $\mathbb{A}_{k}$'s be given by \eqref{eq. annulus construction}. Assume that $u(x)$ satisfies \eqref{eq. main} with $\gamma>1$ in $\mathbb{A}_{k-1}\cup\mathbb{A}_{k}\cup\mathbb{A}_{k+1}$ for some $k\geq2$ and vanishes on $\partial Cone_{\Sigma}$. Let $a_{k}$ be defined as in \eqref{eq. a_k definition}. Then for each $x\in\mathbb{A}_{k}$, we have
    \begin{equation}\label{eq. asymptotic in the annulus}
        u(x)\sim\left\{\begin{aligned}
            &16^{-k\phi}\eta^{\phi},&\mbox{if }&\eta\leq a_{k}^{\frac{1+\gamma}{1-\gamma}},\\
            &16^{-k\phi}a_{k}\eta,&\mbox{if }&\eta\geq a_{k}^{\frac{1+\gamma}{1-\gamma}},
        \end{aligned}\right.
    \end{equation}
    where $\eta=\eta(x)$ is the defining function mentioned in Definition~\ref{def. defining function eta}.
\end{cor}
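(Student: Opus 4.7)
The plan is to derive the corollary from Theorem~\ref{thm. by-product} by a scaling argument that normalizes $\mathbb{A}_k$ to unit scale. Two facts make this reduction work: the equation $-\Delta u=u^{-\gamma}$ is invariant under $u\mapsto \lambda^{-\phi}u(\lambda\,\cdot\,)$ (because $\phi=\tfrac{2}{1+\gamma}$), and $Cone_\Sigma$ is itself scale invariant.

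First I rescale. Setting $R_k:=r_k/2=16^{1-k}/2$ and
\begin{equation*}
    v(y):=R_k^{-\phi}\,u(R_k y),
\end{equation*}
$v$ solves $-\Delta v=v^{-\gamma}$, vanishes on the relevant portion of $\partial Cone_\Sigma$, and by the hypothesis on $u$ satisfies the equation on a rescaled domain containing $Cone_\Sigma\cap(B_{32}\setminus B_{1/128})$. A direct computation gives $v(\vec{e_n})=R_k^{-\phi}U(p_k)=8^{-\phi}a_k\sim a_k$. For $x\in\mathbb{A}_k$ with $y:=x/R_k$, one has $|y|\in[1/8,2]$, and the $0$-homogeneity of $\eta$ forces both $\eta(y)=\eta(x)$ and $\mathrm{dist}(y,\partial Cone_\Sigma)\sim\eta(x)$.

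Next I localize. Let $y_0\in\partial Cone_\Sigma$ be a nearest boundary point to $y$, so $|y_0|\sim 1$, and apply a rigid motion sending $y_0$ to the origin and the inward normal $\vec{n}$ to $\vec{e_n}$. Since $\partial Cone_\Sigma$ is $C^{1,1}$ away from the apex with second fundamental form bounded by $\sim 1/|y_0|\sim 1$ in this window, the boundary becomes a graph $\{x_n=g(x')\}$ with $g(0)=0$, $\nabla g(0)=0$, and the Lipschitz/interior-ball hypotheses of Theorem~\ref{thm. by-product} hold with constants depending only on $(L,K)$. The interior reference $\lambda:=v(y_0+\vec{n})$ in the new chart is comparable to $v(\vec{e_n})\sim a_k$, uniformly in the choice of $y_0$, by the boundary Harnack comparison of Lemma~\ref{lem. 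Harnack Comparable, general Lipschitz domain} applied to $v$.

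Applying Theorem~\ref{thm. by-product} to the shifted/rotated $v$ then yields, for $t\in[0,1]$,
\begin{equation*}
    v(y_0+t\vec{n})\sim
    \begin{cases}
        t^{\phi},& t\leq a_k^{(1+\gamma)/(1-\gamma)},\\
        a_k t,& t\geq a_k^{(1+\gamma)/(1-\gamma)}.
    \end{cases}
\end{equation*}
For $x$ close to $\partial Cone_\Sigma$, the $C^{1,1}$ nearest-point projection places $y=y_0+t\vec{n}$ with $t\sim\eta(x)$, so this transfers directly into the claimed dichotomy for $v(y)$; for $x$ with $\eta(x)\sim 1$ well inside the cone, one instead invokes interior Harnack to conclude $v(y)\sim a_k$, which agrees with the large-$\eta$ branch $a_k\eta\sim a_k$. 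Undoing the rescaling via $u(x)=R_k^{\phi}v(y)$ with $R_k^{\phi}\sim 16^{-k\phi}$ then produces \eqref{eq. asymptotic in the annulus}. The main obstacle is that Theorem~\ref{thm. by-product} gives only a one-dimensional estimate along the normal ray from a single boundary point; the bookkeeping needed to patch these normal-ray estimates into a uniform asymptotic across all of $\mathbb{A}_k$ (with constants depending only on $(n,\gamma,L,K)$), via the boundary Harnack comparison to fix the reference value across different choices of $y_0$, is the principal technical step.
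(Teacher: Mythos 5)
Your overall strategy — rescale so that $\mathbb{A}_k$ sits at unit scale, localize to a nearest boundary point, normalize the reference value $\sim a_k$ via Lemma~\ref{lem. Harnack Comparable, general Lipschitz domain}, and invoke Theorem~\ref{thm. by-product} along the normal ray — is exactly the paper's. But there is a genuine gap in how you set up the hypotheses of Theorem~\ref{thm. by-product}.

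After your single rescaling by $R_k$, the boundary point $y_0$ sits at distance $|y_0|\sim 1$ from the apex, and the boundary graph near $y_0$ has $C^{1,1}$ seminorm of order $K$ (the cone's seminorm), not ``sufficiently small'' as Theorem~\ref{thm. by-product} demands. That hypothesis is not merely a normalization convention: the proof of Theorem~\ref{thm. by-product} feeds $R=1/K$ into Lemma~\ref{lem. rotational}, which requires $\varepsilon=2/R\le M^{-4}$ for a large universal $M$, so $K$ must lie below a fixed universal threshold. If the cone's curvature constant $K$ exceeds this threshold, a rigid motion alone cannot produce a graph satisfying the interior/exterior ball condition of Theorem~\ref{thm. by-product}. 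Separately, Theorem~\ref{thm. by-product} assumes the equation holds on the grounded cylinder $\mathcal{GC}_{20}(0)$, a region of size $20$; around a boundary point at distance $\sim 1$ from the apex, a window of that size overruns the apex (where the cone is not $C^{1,1}$) and exits the annuli $\mathbb{A}_{k-1}\cup\mathbb{A}_k\cup\mathbb{A}_{k+1}$ where you know $u$ solves the equation. Both problems are resolved in the paper by a \emph{second} invariant rescaling: one picks a small uniform $d_0=d_0(n,L,K)$, sets $v_{k,x}(y)=d_0^{-\phi}u_k(x+d_0\,\sigma y)$, and applies Theorem~\ref{thm. by-product} in the $d_0$-scale window. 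Zooming in by $1/d_0$ shrinks the effective curvature to $\sim d_0K$ (hence ``sufficiently small''), places the reference point at distance $1$ in rescaled coordinates while keeping it at distance $d_0$ from $\partial Cone_\Sigma$ in original coordinates, and ensures $\mathcal{GC}_{20}(0)$ stays well away from the apex. Your closing remark identifies the patching of normal-ray estimates as the main technical step; in fact that is routine once the per-point estimates hold with uniform constants. The missing ingredient is the $d_0$-rescaling that makes Theorem~\ref{thm. by-product} applicable in the first place.
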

\begin{proof}
    First,  consider the invariant scaling
    \begin{equation*}
        u_{k}(y)=16^{-k\phi}U(16^{k}y),
    \end{equation*}
    under which the domain $\mathbb{A}_{k-1}\cup\mathbb{A}_{k}\cup\mathbb{A}_{k+1}$ transforms as follows:
    \begin{align*}
        &\mathbb{A}_{k-1}\leadsto Cone_{\Sigma}\cap(B_{256}\setminus B_{16}),\\
        &\mathbb{A}_{k}\leadsto Cone_{\Sigma}\cap(B_{16}\setminus B_{1}),\\
        &\mathbb{A}_{k+1}\leadsto Cone_{\Sigma}\cap(B_{1}\setminus B_{1/16}).
    \end{align*}
    Moreover, the reference point $p_{k}$ is mapped to $8\vec{e_{n}}$, and $u_{k}(8\vec{e_{n}})=a_{k}$. The desired estimate \eqref{eq. asymptotic in the annulus} is therefore equivalent to
    \begin{equation}\label{eq. asymptotic in the annulus simpler}
        u_{k}(x)\sim\left\{\begin{aligned}
            &\eta^{\phi},&\mbox{if }&\eta\leq a_{k}^{\frac{1+\gamma}{1-\gamma}},\\
            &a_{k}\eta,&\mbox{if }&\eta\geq a_{k}^{\frac{1+\gamma}{1-\gamma}}
        \end{aligned}\right.
    \end{equation}
    for  $x \in Cone_{\Sigma}\cap(B_{16}\setminus B_{1}).$
    
    By Lemma~\ref{lem. definiting function in a patch} (b) and Lemma~\ref{lem. review : interior lower bound} (2), we find 
    \begin{equation*}
        u_{k}(x)\sim a_{k}\mbox{ and }\eta(x)\sim1,\quad\mbox{for all }x\in Cone_{\Sigma}\cap(B_{100}\setminus B_{1/10})\cap\Big\{dist(\frac{x}{|x|},\partial\Sigma)\geq d_{0}\Big\}.
    \end{equation*}
    When $x\in (B_{16}\setminus B_{1})\cap\partial Cone_{\Sigma}$,  we can find a unique point $\widetilde{x}$ such that
    \begin{equation*}
        dist(\widetilde{x},\partial Cone_{\Sigma})=|\widetilde{x}-x|=d_{0}.
    \end{equation*}
    Then we have $u_{k}(\widetilde{x})\sim a_{k}$. We then perform the invariant scaling:
    \begin{equation*}
        v_{k,x}(y)=d_{0}^{-\phi}u_{k}(x+d_{0}\cdot\sigma y),
    \end{equation*}
   where $\sigma\in SO(n)$ is some rotation that maps the inner normal of $\partial Cone_{\Sigma}$ at $x$ to $\vec{e_{n}}$. In particular, we have
    \begin{equation*}
        v_{k,x}(\vec{e_{n}})=d_{0}^{-\phi}u_{k}(\widetilde{x})\sim a_{k}.
    \end{equation*}
    Note that as $d_{0}$ is sufficiently small, the boundary curvature of $\partial Cone_{\Sigma}$ becomes extremely small under the map $y\mapsto x+d_{0}\cdot\sigma y$. Thus, we can  apply Theorem~\ref{thm. by-product} to $v_{k,x}(y)$ to obtain the asymptotic estimate along  the line segment $[x,\widetilde{x}]$, which yields \eqref{eq. asymptotic in the annulus simpler}.
\end{proof}
\begin{remark}
    Corollary~\ref{cor. estimate in a cone} does not provide an estimate in the largest annulus $\mathbb{A}_{1}$, because $\mathbb{A}_{1}$ lacks a  neighboring annulus $\mathbb{A}_{0}$ inside $\Omega$. To estimate $U(x)$ in $\mathbb{A}_{1}$, we first note that $\|U\|_{L^{\infty}(\mathbb{A}_{0}\cup\mathbb{A}_{1})}\sim1$, and that every boundary point in $\partial\Omega\setminus B_{1/256}$ admits a sufficiently flat exterior cone. Therefore, we can apply Lemma~\ref{lem. subcritical boundary estimate} to obtain the following estimate:
    \begin{equation}\label{eq. estimate in A1}
        U(x)\sim \left(dist(x,\partial\Omega)\right)^{\frac{2}{1+\gamma}},\quad\mbox{for }x\in\mathbb{A}_{1}.
    \end{equation}
\end{remark}
In the case $n=2$, Corollary~\ref{cor. estimate in a cone} can also be can also be expressed  in the following polar coordination version, since one can choose $\eta(y)=\cos{(\phi\theta)}$ as mentioned in Example~\ref{ex. defining function n=2}.
\begin{exmp}\label{ex. growth rate in annuli n=2}
    When $n=2$, let $y=r\sin{\theta}\vec{e_{1}}+r\cos{\theta}\vec{e_{2}}$ (see Example~\ref{ex. 2d how large is the critical angle}) be a point in the annulus $\mathbb{A}_{k}$ for $k\geq2$, then in $\mathbb{A}_{k}$, one has 
    \begin{equation*}
        U(y)\sim\left\{\begin{aligned}
            &16^{-k\phi}\left(\cos{(\phi\theta)}\right)^{\phi},&\mbox{if }&\cos{(\phi\theta)}\leq a_{k}^{\frac{1+\gamma}{1-\gamma}},\\
            &16^{-k\phi}a_{k}\cos{(\phi\theta)},&\mbox{if }&\cos{(\phi\theta)}\geq a_{k}^{\frac{1+\gamma}{1-\gamma}}.
        \end{aligned}\right.
    \end{equation*}
\end{exmp}
\subsection{Proof of Theorem~\ref{thm. by-product}}
In this subsection, we prove Theorem~\ref{thm. by-product}. To this end, we first define and study a class of translation-invariant solutions. In fact, they are the only global solutions in $\mathbb{R}^{n}_{+}$ to \eqref{eq. main} vanishing on $\partial\mathbb{R}^{n}_{+}$, as previously  studied and classified in \cite{MoMuSc24a,MoMuSc24b}.
\begin{lemma}[Translation-invariant solutions]\label{lem. translation invariant solutions}
    Let $\gamma>1$ and let $V_{\lambda}:\mathbb{R}_{+}\to\mathbb{R}_{+}$ be an ODE solution to
    \begin{equation}\label{eq. 1d solution definition}
        \begin{cases}
            -V_{\lambda}''(t)=\left(V_{\lambda}(t)\right)^{-\gamma}&\mbox{for } \,\,t>0,\\
            V_{\lambda}'(t)\to\lambda&\mbox{when }\,\,t\to\infty,\\
            V_{\lambda}(0)=0.&
        \end{cases}
    \end{equation}
    Then there is a universal estimate for any $(\lambda,t)$, such that
    \begin{equation}\label{eq. 1d solution estimate}
        V_{\lambda}(t)\sim\left\{\begin{aligned}
            &t^{\phi},&\mbox{if }&t\leq\lambda^{\frac{1+\gamma}{1-\gamma}},\\
            &\lambda t,&\mbox{if }&t\geq\lambda^{\frac{1+\gamma}{1-\gamma}}.
        \end{aligned}\right.
    \end{equation}
    In particular, for $\lambda\gtrsim1$, we have $V_{\lambda}(1)\sim\lambda$. Moreover, the derivative of $V_{\lambda}$ satisfies
    \begin{equation}\label{eq. 1d solution estimate, derivative}
        V_{\lambda}'(t)\sim\left\{\begin{aligned}
            &t^{\phi-1},&\mbox{if }&t\leq\lambda^{\frac{1+\gamma}{1-\gamma}},\\
            &\lambda, &\mbox{if }&t\geq\lambda^{\frac{1+\gamma}{1-\gamma}}.
        \end{aligned}\right.
    \end{equation}
\end{lemma}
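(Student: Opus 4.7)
The plan is to reduce the problem to an ODE analysis using the fact that \eqref{eq. 1d solution definition} admits an explicit first integral, and then exploit the scaling invariance of the equation to collapse the whole one-parameter family $\{V_\lambda\}_{\lambda>0}$ to a single universal profile.

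First I would derive the first integral. Multiplying $-V_\lambda'' = V_\lambda^{-\gamma}$ by $V_\lambda'$ and integrating from $t$ to $\infty$, using that $V_\lambda(t)\to\infty$ and $V_\lambda'(t)\to\lambda$ at infinity (which can be extracted from the sign of $V_\lambda''$, so that $V_\lambda'$ is monotone), yields
\begin{equation*}
  \bigl(V_\lambda'(t)\bigr)^{2} \;=\; \lambda^{2} \;+\; \frac{2}{\gamma-1}\, V_\lambda(t)^{\,1-\gamma}, \qquad t>0.
\end{equation*}
This already handles existence, uniqueness and positivity of $V_\lambda$ (standard phase-plane, since $V_\lambda' >0$ and $1-\gamma<0$ forces $V_\lambda(0)=0$ to be traversed in finite time).

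Next I would remove the dependence on $\lambda$ by scaling. The equation $-V''=V^{-\gamma}$ is invariant under $V(t)\mapsto d^{\phi} V(t/d)$ with $\phi=2/(1+\gamma)$, and under this rescaling the asymptotic slope transforms as $\lambda\mapsto d^{\phi-1}\lambda$. Choosing $d=\lambda^{1/(\phi-1)}=\lambda^{(1+\gamma)/(1-\gamma)}$, we obtain a single universal profile $\tilde V(s)$ solving $-\tilde V''=\tilde V^{-\gamma}$ with $\tilde V(0)=0$ and $\tilde V'(s)\to 1$, together with the identity
\begin{equation*}
  V_\lambda(t) \;=\; d^{\phi}\, \tilde V(t/d), \qquad d=\lambda^{(1+\gamma)/(1-\gamma)}.
\end{equation*}
Thus it suffices to prove the universal estimates $\tilde V(s)\sim s^{\phi}$ for $s\le 1$ and $\tilde V(s)\sim s$ for $s\ge 1$, together with the matching bounds for $\tilde V'$; substituting back then gives \eqref{eq. 1d solution estimate} and \eqref{eq. 1d solution estimate, derivative}, with the crossover $t\sim d=\lambda^{(1+\gamma)/(1-\gamma)}$.

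For $\tilde V$, the first integral reads $(\tilde V')^{2}=1+\tfrac{2}{\gamma-1}\tilde V^{1-\gamma}$. In the near-origin regime the term $\tilde V^{1-\gamma}$ dominates (since $1-\gamma<0$ and $\tilde V$ is small), so $\tilde V'\sim \tilde V^{(1-\gamma)/2}$; integrating $\tilde V^{(\gamma-1)/2}\tilde V'\sim 1$ gives $\tilde V(s)\sim s^{2/(1+\gamma)}=s^{\phi}$ and $\tilde V'(s)\sim s^{\phi-1}$ for $s$ small. In the far regime the constant $1$ dominates, so $\tilde V'\to 1$, hence $\tilde V(s)\sim s$ for $s$ large. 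To make ``small'' and ``large'' quantitative and to bridge the crossover, I would choose the threshold $s=1$ (where both terms in the first integral are comparable, since $\tilde V(1)\sim 1$), compare $\tilde V$ with the explicit algebraic solution $V_{0}(t)=c_{*}t^{\phi}$ of $-V''=V^{-\gamma}$ with $c_{*}=(\phi(1-\phi))^{-1/(1+\gamma)}$ as an upper/lower barrier near $0$, and use the monotonicity of $\tilde V'$ in $s$ to upgrade one-sided bounds to the two-sided $\sim$ estimates.

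The main obstacle I anticipate is the uniformity of the constants near the crossover $t\sim d$, where neither regime dominates; this is handled cleanly via the scaling reduction above, because once the estimates are established for the single universal profile $\tilde V$ on a fixed neighborhood of $s=1$ (a compactness argument on the ODE), they transfer automatically to all $\lambda$. The final claim $V_\lambda(1)\sim\lambda$ for $\lambda\gtrsim 1$ follows by noting that $\lambda^{(1+\gamma)/(1-\gamma)}\le 1$ in this range, so $t=1$ lies in the linear regime and the second alternative of \eqref{eq. 1d solution estimate} applies.
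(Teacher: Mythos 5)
Your proposal is correct and starts from the same first integral as the paper: multiplying by $V_\lambda'$ and integrating using the asymptotic slope yields $(V_\lambda')^2=\lambda^2+\tfrac{2}{\gamma-1}V_\lambda^{1-\gamma}$. From there you and the paper diverge in execution. You exploit the scaling invariance $V\mapsto d^{\phi}V(\cdot/d)$ (with $\phi=2/(1+\gamma)$, which transforms the asymptotic slope by the factor $d^{\phi-1}$) to collapse the one-parameter family to a single universal profile $\tilde V$ with asymptotic slope~$1$, and then argue on two regimes for $\tilde V$, invoking the algebraic solution $c_*t^{\phi}$ (with $c_*=(\phi(1-\phi))^{-1/(1+\gamma)}$) and a compactness/monotonicity argument near the crossover $s\sim 1$. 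The paper instead keeps $\lambda$ explicit, inverts the first integral to write $t=\int_0^{V_\lambda(t)}\bigl(\tfrac{2}{\gamma-1}z^{1-\gamma}+\lambda^2\bigr)^{-1/2}\,dz$, and estimates that integral directly in the two regimes $z\lessgtr\lambda^{-2/(\gamma-1)}$ before ``reversing'' to get \eqref{eq. 1d solution estimate}. Your scaling reduction is conceptually cleaner for making uniformity in $\lambda$ transparent, and is the more robust route if one wanted to treat perturbations of the equation; the paper's integral-representation route is more self-contained and avoids the compactness step near the crossover, since the integral estimate is already uniform in $\lambda$. One small point worth tightening in your write-up: $V_0(t)=c_*t^{\phi}$ is only a lower barrier for $\tilde V$ (its first integral has no $+1$), so the upper bound $\tilde V(s)\lesssim s^{\phi}$ near the origin should come directly from the first integral ($\tilde V'\sim\tilde V^{(1-\gamma)/2}$ while $\tilde V$ is small) rather than from a two-sided comparison with $V_0$; this is a minor adjustment, not a gap.
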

\begin{proof}
    We reduce the second-order equation \eqref{eq. 1d solution definition} into a first-order equation by multiplying $V_{\lambda}'(t)$ on both sides of \eqref{eq. 1d solution definition}. This gives 
    \begin{equation*}
        \frac{d}{dt}\Big(V_{\lambda}'(t)\Big)^{2}=2V_{\lambda}''(t)V_{\lambda}'(t)=-2V_{\lambda}(t)^{-\gamma}V_{\lambda}'(t)=\frac{2}{\gamma-1}\frac{d}{dt}\Big(V_{\lambda}(t)\Big)^{1-\gamma}.
    \end{equation*}
    Using  the asymptotic slope assumption $V_{\lambda}'(t)\to\lambda$, we obtain 
    \begin{equation}\label{eq. 1st integral of ODE}
        z'=\sqrt{\frac{2}{\gamma-1}z^{1-\gamma}+\lambda^{2}},
    \end{equation}
    where $z=V_{\lambda}(t)$. We then obtain the following integration identity:
    \begin{equation*}
        \int_{0}^{V_{\lambda}(t)}\frac{dz}{\sqrt{\frac{2}{\gamma-1}z^{1-\gamma}+\lambda^{2}}}=t.
    \end{equation*}
        It is straightforward to  verify that the integral on the left-hand side has the uniform estimate:
    \begin{equation*}
        \int_{0}^{s}\frac{dz}{\sqrt{\frac{2}{\gamma-1}z^{1-\gamma}+\lambda^{2}}}\sim\left\{\begin{aligned}
            &s^{1/\phi},&\mbox{if }&z\leq\lambda^{-\frac{2}{\gamma-1}},\\
            &s/\lambda,&\mbox{if }&z\geq\lambda^{-\frac{2}{\gamma-1}}.
        \end{aligned}\right.
    \end{equation*}
    Reversing such an estimate yields \eqref{eq. 1d solution estimate}. Finally, substituting \eqref{eq. 1d solution estimate} into \eqref{eq. 1st integral of ODE} gives the derivative estimate \eqref{eq. 1d solution estimate, derivative}.
\end{proof}
Next, we edit such a class of solutions, so that they provide the sharp estimate of radially symmetric solutions.
\begin{lemma}[Rotation-invariant solutions]\label{lem. rotational}
    Assume that $W_{\lambda,\pm R}:[0,3]\to\mathbb{R}_{+}$ is a rotation-invariant solution to \eqref{eq. main}, meaning that it satisfies the ODE problem ($\gamma>1$):
    \begin{equation*}
        \left\{\begin{aligned}
            &W_{\lambda,\pm R}''(t)+\frac{1}{t\pm R}W_{\lambda,\pm R}'(t)=-\left(W_{\lambda,\pm R}(t)\right)^{-\gamma}&\mbox{for }&t\in(0,3),\\
            &W_{\lambda,\pm R}(3)=\lambda,&&\\
            &V_{\lambda}(0)=0,&&
        \end{aligned}\right.
    \end{equation*}
    where $R$ is a sufficiently large uniform constant. Then for $\lambda\gtrsim1$, we have
    \begin{equation}\label{eq. rotational solution estimate}
        W_{\lambda,\pm R}(t)\sim\left\{\begin{aligned}
            &t^{\phi},&\mbox{if }&t\leq\lambda^{\frac{1+\gamma}{1-\gamma}},\\
            &\lambda t,&\mbox{if }&\lambda^{\frac{1+\gamma}{1-\gamma}}\leq t\leq1,\\
            &\lambda,&\mbox{if }&1\leq t\leq3.
        \end{aligned}\right.
    \end{equation}
\end{lemma}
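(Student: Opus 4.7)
The plan is to view the first-order term $\frac{1}{t\pm R}W'$ as an $O(1/R)$ perturbation on $[0,3]$ and reduce the estimate to Lemma~\ref{lem. translation invariant solutions}. Since $R$ is taken large, the effective asymptotic slope felt by $W_{\lambda,\pm R}$ should be of the same order as $\lambda$, and it is pinned down by the matching condition $W_{\lambda,\pm R}(3)=\lambda$. Thus I expect $W_{\lambda,\pm R}\sim V_\mu$ for some $\mu\sim\lambda$, and the three-regime shape of \eqref{eq. rotational solution estimate} would come from \eqref{eq. 1d solution estimate}.

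First I would check, via a standard shooting argument, that $W_{\lambda,\pm R}$ exists, is positive, and is strictly increasing on $[0,3]$; this is what lets the first-integral technique of Lemma~\ref{lem. translation invariant solutions} work. Multiplying the ODE by $W'>0$ gives the perturbed energy identity
\begin{equation*}
\frac{d}{dt}\Big[\tfrac{1}{2}(W')^2-\tfrac{1}{\gamma-1}W^{1-\gamma}\Big]=-\frac{(W')^2}{t\pm R},
\end{equation*}
so the modified energy $E(t)$ is monotone on $[0,3]$, with total variation bounded by $\frac{1}{R}\int_0^3(W')^2\,dt$. Using the boundary datum $W(3)=\lambda\gtrsim 1$ to identify $E(3)\sim\lambda^2/2$, this would give $E(t)=\lambda^2/2+O(1/R)$ uniformly on $[0,3]$. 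Inverting the first-order ODE $W'=\sqrt{\frac{2}{\gamma-1}W^{1-\gamma}+2E(t)}$ exactly as in Lemma~\ref{lem. translation invariant solutions} then produces the three asymptotic regimes in \eqref{eq. rotational solution estimate}.

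The main obstacle is closing the bootstrap: the $O(1/R)$ correction in $E(t)$ is itself controlled by an integral of $(W')^2$, which may blow up near $t=0$ where the singular term $W^{1-\gamma}$ dominates and $W'\sim t^{\phi-1}$. To bypass this circularity, I would instead use a direct two-sided barrier argument. Setting $\mathcal{L}y:=-y''-\frac{1}{t\pm R}y'-y^{-\gamma}$, the function $V_\mu$ of Lemma~\ref{lem. translation invariant solutions} satisfies $\mathcal{L}V_\mu=-\frac{V_\mu'}{t\pm R}$, which has a definite sign (negative for $+R$, positive for $-R$). By the standard ODE maximum principle for $\mathcal{L}$, which holds since $y\mapsto-y^{-\gamma}$ is decreasing on $y>0$, this makes $V_\mu$ a one-sided barrier for $W_{\lambda,\pm R}$ provided $V_\mu(3)$ is chosen on the correct side of $\lambda$. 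A slightly rescaled companion $(1+\epsilon)V_{\widetilde\mu}$ with $\epsilon\sim 1/R$ and $\widetilde\mu\sim\lambda$, again chosen to match the boundary value at $t=3$, supplies the opposite-sided barrier. This sandwiches $W_{\lambda,\pm R}$ between two instances of $V_\mu$ with $\mu\sim\lambda$, and the estimate \eqref{eq. rotational solution estimate} follows immediately from \eqref{eq. 1d solution estimate}.
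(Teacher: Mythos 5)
Your one-sided barrier observation is correct and cleanly exploits that $\mathcal{L}V_\mu=-V_\mu'/(t\pm R)$ has a definite sign; and your instinct to abandon the energy identity is right (for $\gamma\ge 3$, $V_\lambda'\sim t^{\phi-1}$ with $\phi\le\tfrac12$, so $\int_0^3(W')^2\,dt$ genuinely diverges). But the opposite-sided barrier $(1+\epsilon)V_{\widetilde\mu}$ with $\epsilon\sim 1/R$ does not close, and the failure is exactly where the quantitative statement has teeth. Take the $+R$ case and compute
\begin{equation*}
\mathcal{L}\big[(1+\epsilon)V_\mu\big]
=\big[(1+\epsilon)-(1+\epsilon)^{-\gamma}\big]V_\mu^{-\gamma}-(1+\epsilon)\frac{V_\mu'}{t+R},
\end{equation*}
so the supersolution inequality requires, pointwise on $(0,3)$,
\begin{equation*}
(1+\gamma)\epsilon\, V_\mu^{-\gamma}\ \gtrsim\ \frac{V_\mu'}{R}.
\end{equation*}
Near $t=0$ this is harmless since $V_\mu^{-\gamma}/V_\mu'\sim t^{-1}\to\infty$. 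The trouble is in $1\le t\le 3$, where by \eqref{eq. 1d solution estimate} and \eqref{eq. 1d solution estimate, derivative} one has $V_\mu^{-\gamma}\sim\lambda^{-\gamma}$ and $V_\mu'\sim\lambda$, so the constraint becomes $\epsilon\gtrsim\lambda^{1+\gamma}/R$. Since $\lambda\gtrsim1$ is unbounded above (it is $a_k$ in the application, which tends to infinity), no uniform $\epsilon\sim 1/R$ works; your ``small perturbation'' intuition is misleading because the relevant comparison is not the size of the coefficient $1/(t\pm R)$ but the ratio of the perturbation $V_\mu'/R$ to the restoring term $V_\mu^{-\gamma}$, which is $\sim\lambda^{1+\gamma}/R$ when $V_\mu$ is large. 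The same obstruction kills $(1-\epsilon)V_{\widetilde\mu}$ in the $-R$ case.

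The paper's proof handles this by not using a purely multiplicative rescaling: the barriers are $MV_\lambda(t)\mp\frac{\lambda}{M^2}\big[(t-\lambda^{\frac{1+\gamma}{1-\gamma}})_+\big]^2$ with $M$ a fixed large constant and $\varepsilon=2/R\le M^{-4}$. The quadratic correction is inert where the singular term is strong (for $t\le\lambda^{\frac{1+\gamma}{1-\gamma}}$) and injects an extra $\mp\frac{2\lambda}{M^2}$ of second derivative precisely in the region $t>\lambda^{\frac{1+\gamma}{1-\gamma}}$ where $V_\lambda^{-\gamma}$ has decayed; this absorbs the $O(\lambda/R)$ perturbation there without any $\lambda$-dependence in the multiplicative constant, which is what makes the final estimate $W_{\lambda,\pm R}\sim V_\lambda$ uniform in $\lambda$. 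To repair your argument you would need to add an analogous additive corrector (or an equivalent device) to your candidate supersolution; the clean one-sided barrier you found can be kept, but the other side cannot be a pure scalar multiple of a translation-invariant solution.
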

\begin{proof}
    Rewrite the ODE in a more general form ($W_{\lambda}$ is a short-hand for $W_{\lambda,\pm R}$):
    \begin{equation}\label{eq. simplified rotational ODE}
        W_{\lambda}''(t)+c(t)W_{\lambda}'(t)=-\left(W_{\lambda}(t)\right)^{-\gamma},
    \end{equation}
where $|c(t)|\leq\varepsilon:=\frac{2}{R}$ is sufficiently small.
    
    To prove \eqref{eq. rotational solution estimate}, it suffices to construct suitable upper and lower barriers of \eqref{eq. simplified rotational ODE}.
    
    Let $M$ be sufficiently large, and construct an upper barrier:
    \begin{equation*}
        \overline{W_{\lambda}}(t)=M\cdot V_{\lambda}(t)-\frac{\lambda}{M^{2}}\Big[(t-\lambda^{\frac{1+\gamma}{1-\gamma}})_{+}\Big]^{2},
    \end{equation*}
    and a lower barrier
    \begin{equation*}
        \underline{W_{\lambda}}(t)=\frac{1}{M}\cdot V_{\lambda}(t)+\frac{\lambda}{M^{2}}\Big[(t-\lambda^{\frac{1+\gamma}{1-\gamma}})_{+}\Big]^{2}.
    \end{equation*}
    We require $\varepsilon=\frac{2}{R}$ to be sufficiently small (i.e., $R$ sufficiently large), such that $\varepsilon\leq M^{-4}$.
    
    We now check that they do are barriers. First, when $M$ is large, it holds that 
    \begin{equation*}
        \frac{\lambda}{M^{2}}\Big[(t-\lambda^{\frac{1+\gamma}{1-\gamma}})_{+}\Big]^{2}\leq\frac{3\lambda}{M^{2}}t\leq\frac{1}{M}\cdot V_{\lambda}(t)\quad\mbox{for all }0\leq t\leq3,
    \end{equation*}
   which implies that
    \begin{equation*}
        \overline{W_{\lambda}}(t)\geq\frac{M}{2}\cdot V_{\lambda}(t)\quad\mbox{and}\quad\underline{W_{\lambda}}(t)\leq\frac{2}{M}\cdot V_{\lambda}(t)\quad\mbox{for all }0\leq t\leq3.
    \end{equation*}
    In particular, we see
    \begin{equation*}
        \underline{W_{\lambda}}(3)<\lambda<\overline{W_{\lambda}}(3).
    \end{equation*}
    Second, let's check that they satisfy the ODE inequality, provided that $M$ is large. For $\overline{W_{\lambda}}$, since $\overline{W_{\lambda}}(t)\geq\frac{M}{2}\cdot V_{\lambda}(t)$, we derive the following estimate (in the viscosity sense):
    \begin{align*}
    &\overline{W_{\lambda}}''(t)+c(t)\overline{W_{\lambda}}'(t)+\left(\overline{W_{\lambda}}(t)\right)^{-\gamma}\\
        \leq&M\cdot V_{\lambda}''(t)-\frac{2\lambda}{M^{2}}\cdot\chi_{(\lambda^{\frac{1+\gamma}{1-\gamma}},\infty)}+M\varepsilon\cdot V_{\lambda}'(t)+\frac{2\varepsilon\lambda}{M^{2}}(t-\lambda^{\frac{1+\gamma}{1-\gamma}})_{+}+\Big(\frac{M}{2}\cdot V_{\lambda}(t)\Big)^{-\gamma}\\
        \leq&-\Big(M-\frac{2^{\gamma}}{M^{\gamma}}\Big)\left(V_{\lambda}(t)\right)^{-\gamma}-\frac{2\lambda}{M^{2}}\cdot\chi_{(\lambda^{\frac{1+\gamma}{1-\gamma}},\infty)}+M\varepsilon\cdot V_{\lambda}'(t)+\frac{6\varepsilon\lambda}{M^{2}}\\
        \leq&-\left(V_{\lambda}(t)\right)^{-\gamma}-\frac{2\lambda}{M^{2}}\cdot\chi_{(\lambda^{\frac{1+\gamma}{1-\gamma}},\infty)}+\frac{2}{M^{3}}\cdot V_{\lambda}'(t)\leq 0
    \end{align*}
    for $0\leq t\leq 3,$
    where the last step follows from Lemma~\ref{lem. translation invariant solutions}. Similarly,  since $\underline{W_{\lambda}}(t)\leq\frac{2}{M}\cdot V_{\lambda}(t)$, then for $0\leq t \leq 3$, we obtain
    \begin{align*}   &\underline{W_{\lambda}}''(t)+c(t)\underline{W_{\lambda}}'(t)+\left(\underline{W_{\lambda}}(t)\right)^{-\gamma}\\
        \geq&\frac{1}{M}\cdot V_{\lambda}''(t)+\frac{2\lambda}{M^{2}}\cdot\chi_{(\lambda^{\frac{1+\gamma}{1-\gamma}},\infty)}-\frac{\varepsilon}{M}\cdot V_{\lambda}'(t)-\frac{2\varepsilon\lambda}{M^{2}}(t-\lambda^{\frac{1+\gamma}{1-\gamma}})_{+}+\Big(\frac{2}{M}\cdot V_{\lambda}(t)\Big)^{-\gamma}\\
        \geq&\Big(\frac{M^{\gamma}}{2^{\gamma}}-\frac{1}{M}\Big)\left(V_{\lambda}(t)\right)^{-\gamma}+\frac{2\lambda}{M^{2}}\cdot\chi_{(\lambda^{\frac{1+\gamma}{1-\gamma}},\infty)}-\frac{\varepsilon}{M}\cdot V_{\lambda}'(t)-\frac{6\varepsilon\lambda}{M^{2}}\\
        \geq&\left(V_{\lambda}(t)\right)^{-\gamma}+\frac{2\lambda}{M^{2}}\cdot\chi_{(\lambda^{\frac{1+\gamma}{1-\gamma}},\infty)}-\frac{2}{M^{5}}\cdot V_{\lambda}'(t)\geq0.
    \end{align*}
 Therefore, we infer from the maximum principle that
    \begin{equation*}
        \underline{W_{\lambda}}(t)\leq W_{\lambda,\pm R}(t)\leq\overline{W_{\lambda}}(t)\quad\mbox{for }0\leq t\leq3.
    \end{equation*}
    Note that the barriers $\overline{W_{\lambda}}(t)$ and $\underline{W_{\lambda}}(t)$ both satisfy the estimate \eqref{eq. rotational solution estimate}, then so does $W_{\lambda,\pm R}(t)$.
\end{proof}

Now, we are able to prove Theorem~\ref{thm. by-product}. The proof is based on constructing upper and lower barriers.
\begin{proof}[Proof of Theorem~\ref{thm. by-product}]
    From Lemma~\ref{lem. u is small near the boundary}, we deduce that (by fixing some small universal $\delta$):
    \begin{equation*}
        \max_{\mathcal{GC}_{6}(0)}u\sim\min_{\mathcal{SC}_{6,\delta}(0)}u\sim\lambda.
    \end{equation*}
    If $K$ is sufficiently small, then at the origin, the curve $\{x_{n}=g(x')\}$ admits both  an interior and an exterior tangential ball with a large radius $R$ (the same as the one in Lemma~\ref{lem. rotational}). Moreover, for every $x'$ satisfying $1\leq|x'|\leq6$, we have a geometric observation:
    \begin{equation}\label{eq. thm 3.1 geometric observation}
        R-\sqrt{R^{2}-|x'|^{2}}\geq\frac{1}{K}-\sqrt{\frac{1}{K^{2}}-|x'|^{2}}+6\delta\geq g(x')+6\delta.
    \end{equation}

    Now, we define two domains (see Figure 2 and Figure 3 below):
    \begin{align*}
        \mathcal{D}_{1}=\{x=(x',x_{n}):|x'|\leq5,\ 0\leq x_{n}+R-\sqrt{R^{2}-|x'|^{2}}\leq5\},\\
        \mathcal{D}_{2}=\{x=(x',x_{n}):|x'|\leq5,\ 0\leq x_{n}-R+\sqrt{R^{2}-|x'|^{2}}\leq5\}.
    \end{align*}
    Note that in Figure 3, there is a $6\delta$-gap at $|x'|=5$ between $\mathcal{D}_{2}$ and $\Gamma$, and this is the geometric interpretation of \eqref{eq. thm 3.1 geometric observation}.

 \begin{center}
 \begin{tikzpicture}[node distance = 0.1cm]
 \draw[red,line width=1.0] (2.828,-1.172) arc (45:135:4);
\draw[red,line width=1.0] (2.828,2.328) arc (45:135:4)
node at (2, 2.1) [ font=\fontsize{15}{15}\selectfont] {$ \mathcal{D}_1$};
 \path (0,0) [thick,fill=black]  circle(1.3 pt) node at (0, -0.4) [ font=\fontsize{15}{15}\selectfont] {$0$};
\draw [thick][red] (-2.828,-1.172) -- (-2.828,2.328);
\draw [thick] [red] (2.828,-1.172) -- (2.828,2.328);
\path (0,1.5) [thick,fill=black]  circle(1.3 pt) node at (0.4, 1.5) [ font=\fontsize{15}{15}\selectfont] {$\vec{e_1}$};
\draw[black,domain=-3.5:3.5,smooth] plot(\x, {0.02*\x*\x*\x}) 
node at (3.3, 1) [ font=\fontsize{15}{15}\selectfont] {$\Gamma$};
\node [below=1cm, align=flush center,text width=8cm] at (0,-1.5)
        {$Figure$ 2 \fontsize{15}{15}\selectfont };
 \draw[red,line width=1.0] (5.828,3.172) arc (225:315:4);
  \draw[red,line width=1.0] (5.828,-0.328) arc (225:315:4)
  node at (10.5, 1.7) [ font=\fontsize{15}{15}\selectfont] {$ \mathcal{D}_2$};
  \draw [thick][red] (5.828,3.172)--(5.828,-0.328);
    \draw [thick][red] (11.46,3.172)--(11.46,-0.328);
     \path (8.644,-1.5) [thick,fill=black]  circle(1.3 pt) node at (8.644, -1.75) [ font=\fontsize{15}{15}\selectfont] {$0$};
     \path (8.644,-0.2) [thick,fill=black]  circle(1.3 pt) node at (9, -0.2) [ font=\fontsize{15}{15}\selectfont] {$\vec{e_1}$};
     \draw[black,domain=5.5:12,smooth] plot(\x, {0.02*(\x-8)*(\x-8)*(\x-8)-1.52})
     node at (12.2, -0.7) [ font=\fontsize{15}{15}\selectfont] {$\Gamma$};
     \draw [thick][blue] (8.044,-0.8)--(8.044,0.4);
     \draw [thick][blue] (9.244,-0.8)--(9.244,0.4);
      \draw [thick][blue] (8.044,-0.8)--(9.244,-0.8);
       \draw [thick][blue] (8.044,0.4)--(9.244,0.4);
       \draw [->, blue, thick] 
        (8.4,-0.8) to [out=0, in=200] (10.4,-1.6)
        node at (11.4, -1.8) [font=\fontsize{12}{12}\selectfont] 
        {$B_1' \times [\frac{1}{2}, \frac{3}{2}]$};
         \draw[<->,line width=1.2] [blue](11.46,-0.7) -- (11.46,-0.328) node at (11.78, 0.47) [ font=\fontsize{9}{9}\selectfont] [blue]{$>6\delta$};
           \draw[green, thick] (11.46,-0.48) -- (11.67,-0.48);
        \draw[->, green, thick] (11.68,-0.514) -- (11.68,0.3);
        \node [below=1cm, align=flush center,text width=8cm] at (8.644,-1.5)
        {$Figure$ 3 \fontsize{15}{15}\selectfont };
\end{tikzpicture}
 \end{center}

    Let's consider two solutions $\overline{u}$ and $\underline{u}$ as the upper and lower barrier of $u$, so that they are solutions to:
    \begin{equation*}
        \begin{cases}
            -\Delta\overline{u}(x)=\left(\overline{u}(x)\right)^{-\gamma}&\mbox{in }\,\,\mathcal{D}_{1},\\
            \overline{u}(y)=0&\mbox{on }\,\,\partial\mathcal{D}_{1}\cap\{x_{n}\leq g(x')\},\\
            \overline{u}(y)=u(y)&\mbox{on }\,\,\partial\mathcal{D}_{1}\cap\{x_{n}\geq g(x')\},
        \end{cases}
    \end{equation*}
    and
    \begin{equation*}
        \begin{cases}
            -\Delta\underline{u}(x)=\left(\underline{u}(x)\right)^{-\gamma}&\mbox{in }\,\,\mathcal{D}_{2},\\
            \underline{u}(y)=\Big(\frac{y_{n}-R+\sqrt{R^{2}-|y'|^{2}}}{10}\Big)\cdot\min_{\mathcal{SC}_{6,\delta}(0)}u&\mbox{on }\,\,\partial\mathcal{D}_{2}.
        \end{cases}
    \end{equation*}
    In fact, we see that the boundary value of $\overline{u}(x)$ equals that of $u(x)$ (which is extended trivially below $\Gamma=\{x_{n}=g(x')\}$), while $u(x)$ is a sub-solution to \eqref{eq. main} in $\mathcal{D}_{1}$ after the trivial extension. Therefore,
    \begin{equation*}
        u(x)\leq\overline{u}(x)\quad\mbox{in }\mathcal{D}_{1}\cap\mathcal{GC}_{6}(0).
    \end{equation*}
    We apply a similar argument for $\underline{u}$, but in this case, there is no need to  extend $u$ trivially because $u$ satisfies \eqref{eq. main} everywhere in $\mathcal{D}_{2}\subseteq\mathcal{GC}_{6}(0)$. We then partition $\partial\mathcal{D}_{2}$ into two distinct  parts, namely
    \begin{equation*}
        (\partial\mathcal{D}_{2}\cap\{x_{n}=R-\sqrt{R^{2}-|x'|^{2}}\})\cup(\partial\mathcal{D}_{2}\cap\{x_{n}>R-\sqrt{R^{2}-|x'|^{2}}\}).
    \end{equation*}
    In the first part, we have $\underline{u}(y)=0\leq u(y)$. In the second part, we have $\displaystyle \underline{u}(y)=\min_{\mathcal{SC}_{6,\delta}(0)}u\leq u(y)$ because of the geometric observation \eqref{eq. thm 3.1 geometric observation}. Therefore,
    \begin{equation*}
        u(x)\geq\underline{u}(x)\quad\mbox{in }\mathcal{D}_{2}.
    \end{equation*}
    
    To conclude, in the common domain $\mathcal{D}_{1}\cap\mathcal{D}_{2}\cap\mathcal{GC}_{6}(0)$, in particular along the line segment $[0,4\vec{e_{n}}]$, we have that
    \begin{equation}\label{eq. thm 1.3: upper and lower bound on a line}
        \underline{u}(t\vec{e_{n}})\leq u(t\vec{e_{n}})\leq\overline{u}(t\vec{e_{n}}),\quad\mbox{for all }0\leq t\leq4.
    \end{equation}
    It then suffices to bound $\underline{u}(t\vec{e_{n}})$ from below and $\overline{u}(t\vec{e_{n}})$ from above. In fact, the first key observation is that:
    \begin{equation}\label{eq. thm 3.1 claim}
        \underline{u}(\vec{e_{n}})\sim\overline{u}(\vec{e_{n}})\sim\lambda.
    \end{equation}
    Granted \eqref{eq. thm 3.1 claim}, we can immediately complete the proof of Theorem~\ref{thm. by-product}. In fact, we apply Corollary~\ref{cor. nonlinear boundary harnack accurate} to the pair $(\overline{u},W_{\lambda,+R})$ and the pair $(\underline{u},W_{\lambda,-R})$, then \eqref{eq. thm 3.1 claim} implies that
    \begin{equation*}
        \overline{u}(t\vec{e_{n}})\sim W_{\lambda,+R}(t)\quad\mbox{and}\quad\underline{u}(t\vec{e_{n}})\sim W_{\lambda,-R}(t)\quad\mbox{for }0\leq t\leq1.
    \end{equation*}
    Then an application of the growth rate estimate Lemma~\ref{lem. rotational} yields the conclusion of Theorem~\ref{thm. by-product}.

    Finally, we prove \eqref{eq. thm 3.1 claim}. First, from \eqref{eq. thm 1.3: upper and lower bound on a line}, we already have
    \begin{equation*}
        \underline{u}(\vec{e_{n}})\lesssim\lambda\lesssim\overline{u}(\vec{e_{n}}).
    \end{equation*}
    Next, using the boundary data of $\underline{u}$ and the fact that $\underline{u}$ is super-harmonic, we can bound $u(x)$ from below by a linear harmonic function, i.e.
    \begin{equation*}
        \underline{u}(x)\geq\frac{x_{n}-\frac{1}{2}}{200}\min_{\mathcal{SC}_{6,\delta}(0)}u\quad\mbox{in }B_{1}'\times[\frac{1}{2},\frac{3}{2}].
    \end{equation*}
    Then we have
    \begin{equation*}
        \underline{u}(\vec{e_{n}})\geq\frac{1}{400}\min_{\mathcal{SC}_{6,\delta}(0)}u\sim\lambda.
    \end{equation*}
    Third, observe that the boundary data of $\overline{u}$ is less than $\displaystyle \max_{\mathcal{GC}_{6}(0)}u$, and
    \begin{equation*}
        \max_{\mathcal{GC}_{6}(0)}u\sim\lambda\gtrsim1.
    \end{equation*}
    We then compare $\overline{u}$ with a paraboloid-shaped upper solution (with $M$ being a sufficiently large uniform constant):
    \begin{equation*}
        P(x)=M\lambda+(100n-|x|^{2}),\quad x\in B_{10\sqrt{n}}.
    \end{equation*}
    It then implies that
    \begin{equation*}
        \overline{u}(\vec{e_{n}})\leq P(\vec{e_{n}})\sim\lambda.
    \end{equation*}
    Combining these estimates  proves  \eqref{eq. thm 3.1 claim}. This completes  the proof of Theorem~\ref{thm. by-product}.
\end{proof}

\section{Derivation of a discrete integral equation}\label{sec. integral equation}
In this section, we analyze the integral equation
\begin{equation*}
    U(x)=\int_{\Omega}G(x,y)U^{-\gamma}(y) dy.
\end{equation*}
In particular, we focus on choosing $x=p_{k}$ as mentioned in \eqref{eq. reference points pk}. This leads us to derive  the following discrete integral equation for the sequence $a_{k}$ defined in \eqref{eq. a_k definition}, which will be crucial  for proving Theorem~\ref{thm. asymptotic estimate}.
\begin{lemma}[A discrete integral equation]\label{lem. discrete integral equation}
    Under the assumptions of Theorem~\ref{thm. asymptotic estimate}, let $\{a_{k}\}_{k\geq1}$ be the sequence defined in \eqref{eq. a_k definition}. Then for every $k\geq 1,$ $a_{k}$ satisfies the following discrete integral equation:
    \begin{equation*}
        a_{k}\sim\left\{\begin{aligned}
            &\sum_{j=1}^{k}a_{j}^{-\gamma},&\mbox{if }&1<\gamma<2,\\
            &\sum_{j=1}^{k}a_{j}^{-2}\ln{a_{j}},&\mbox{if }&\gamma=2,\\
            &\sum_{j=1}^{k}a_{j}^{\frac{2}{1-\gamma}},&\mbox{if }&\gamma>2.
        \end{aligned}\right.
    \end{equation*}
\end{lemma}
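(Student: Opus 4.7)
The plan is to substitute $x = p_{k}$ into the Green-function identity \eqref{eq. key integral equation using Green function}, decompose $\Omega = \bigcup_{j\geq 1} \mathbb{A}_{j}$, and estimate $\int_{\mathbb{A}_{j}} G(p_{k}, y) U^{-\gamma}(y)\, dy$ annulus by annulus. On each $\mathbb{A}_{j}$ I will control $U^{-\gamma}(y)$ by Corollary~\ref{cor. estimate in a cone} (for $j \geq 2$) or by \eqref{eq. estimate in A1} (for $j=1$), and control $G(p_{k},y)$ by Lemma~\ref{lem. green function for small y} when $j \geq k$ (so $|y| \leq 2|p_{k}|$) and by Lemma~\ref{lem. green function for large y} when $j < k$. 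Multiplying the resulting identity by $16^{k\phi}$ should then produce the claimed discrete recursion.

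The first step is the self-interaction $j = k$. After the invariant rescaling $z = y/|p_{k}|$, the Green function has the standard Newtonian singularity at $z = \vec{e_{n}}$ together with a linear vanishing on the rescaled cone boundary, and $U$ factors as $16^{-k\phi}$ times a shape function that equals $a_{k}$ at $\vec{e_{n}}$ and behaves like $\eta^{\phi}$ or $a_{k}\eta$ on the two sides of the threshold $\eta_{*} := a_{k}^{(1+\gamma)/(1-\gamma)}$. The interior singularity at $z = \vec{e_{n}}$ yields a finite contribution of order $16^{-k\phi}a_{k}^{-\gamma}$, since the prefactors collapse via $\phi\gamma = 2 - \phi$. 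Near the cone boundary I split the defining-function integral into $[0, \eta_{*}]$ and $[\eta_{*}, O(1)]$; using the algebraic identities $(1+\gamma)\phi = 2$ and $2 - \phi\gamma = \phi$, the subcritical piece $\int_{0}^{\eta_{*}} \eta^{1-\phi\gamma}\, d\eta$ always contributes $\sim a_{k}^{-2/(\gamma-1)}$, while the supercritical piece equals $a_{k}^{-\gamma} \int_{\eta_{*}}^{O(1)} \eta^{1-\gamma}\, d\eta$.

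This last integral is where the three cases separate. For $1 < \gamma < 2$ it is bounded and dominated by the upper endpoint, giving $\sim a_{k}^{-\gamma}$, which beats $a_{k}^{-2/(\gamma-1)}$; for $\gamma = 2$ it equals $\ln(1/\eta_{*}) \sim \ln a_{k}$, producing the logarithmic contribution $a_{k}^{-2} \ln a_{k}$; and for $\gamma > 2$ it is dominated by the lower endpoint, giving $a_{k}^{-\gamma}\, \eta_{*}^{2-\gamma} = a_{k}^{-2/(\gamma-1)}$ after simplifying the exponent via $-\gamma + (1+\gamma)(\gamma-2)/(\gamma-1) = -2/(\gamma-1)$. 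Comparing the three pieces in each case, one reads off the expected $j = k$ summand in each of the three formulas.

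For the remaining annuli I repeat the same calculation. For $j < k$ (larger scales), Lemma~\ref{lem. green function for large y} gives $G(p_{k},y) \sim (|p_{k}|/r)^{\phi} r^{2-n}(1-r)\eta(y)$, and the contribution of $\mathbb{A}_{j}$ takes the same algebraic form as the $j=k$ piece with $a_{k}$ replaced by $a_{j}$ and an overall prefactor of $16^{-k\phi}$; after the factor $16^{k\phi}$ is distributed, the clean summands $a_{j}^{-\gamma}$, $a_{j}^{-2}\ln a_{j}$, and $a_{j}^{2/(1-\gamma)}$ appear. For $j > k$ (smaller scales) Lemma~\ref{lem. green function for small y} produces an extra decaying factor $16^{-(j-k)\phi}$ which, combined with the monotonicity $a_{j} \geq a_{k}$ of Lemma~\ref{lem. a_k is increasing}, shows that the tail $\sum_{j > k}$ is absorbed into the $j = k$ term. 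The $j = 1$ term is controlled by \eqref{eq. estimate in A1} together with $a_{1} \sim 1$, so it fits the general formula as well.

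The main obstacle is the bookkeeping of the two regimes in Corollary~\ref{cor. estimate in a cone} and the matching of exponents: one must verify that the key identities $(1+\gamma)\phi = 2$ and $2 - \phi\gamma = \phi$ collapse the various factors of $16^{k\phi}$, $16^{j\phi}$, and $\eta_{*}$ into an expression involving only the $a_{j}$'s, with no residual $k$- or $j$-dependence. The emergence of the logarithmic factor at the borderline case $\gamma = 2$ is the subtlest point: it comes exclusively from $\int \eta^{-1}\, d\eta$, and tracking the correct lower limit $\eta_{*}$ (rather than $0$) is what identifies the factor as $\ln a_{j}$ rather than something larger.
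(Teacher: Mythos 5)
Your proposal is correct and takes essentially the same approach as the paper: substitute $x = p_k$ into the Green-function identity, decompose $\Omega$ into the annuli $\mathbb{A}_j$, bound $G(p_k,\cdot)$ via Lemmas~\ref{lem. green function for small y}--\ref{lem. green function for large y} and $U^{-\gamma}$ via Corollary~\ref{cor. estimate in a cone} (and \eqref{eq. estimate in A1} on $\mathbb{A}_1$), split the self-interaction $j=k$ into a near-pole piece and two boundary-layer regimes across the threshold $\eta_* = a_k^{(1+\gamma)/(1-\gamma)}$, verify that the annuli $j<k$ each contribute $16^{-k\phi}$ times the clean summand in $a_j$ after the $16$-power bookkeeping collapses, and finally absorb the tail $j>k$ using monotonicity of $a_j$. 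The only slight imprecision is in the $j>k$ tail: you attribute the geometric decay entirely to the Green function's $(r/h)^{\phi}$ factor, i.e.\ $16^{-(j-k)\phi}$, whereas the volume form and the $U^{-\gamma}$ scaling contribute an additional net factor $16^{(j-k)(\phi\gamma - n)} = 16^{(j-k)(2-\phi-n)}$, which is also decaying (since $n\geq 2 > 2-\phi$); one should note that this extra factor does not counteract the Green-function decay before invoking the monotonicity of $a_j$ to conclude the tail is $\lesssim$ the $j=k$ term. This is a one-line verification and does not affect the validity of the argument.
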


Since the Green function exhibits different behaviors between the case $n=2$ and the case $n\geq3$, we need to prove Lemma~\ref{lem. discrete integral equation} separately in two subsections.
\subsection{Case \texorpdfstring{$n=2$}{Lg}}

We first prove Lemma~\ref{lem. discrete integral equation} in dimension $2$. By the representation of the Green function, we have 
\begin{equation*}
U(p_k)=\int_\Omega G(p_k, y) U^{-\gamma} (y)dy. 
\end{equation*}

Now we divide the domain $\Omega$ into four regions (see Figure 4 below):
$$
\Omega_1:= \mathbb{A}_{k},
$$
$$
\Omega_2:=\Omega \cap B_{r_{k+1}}, 
$$
$$
\Omega_3:= \mathbb{A}_2\cup \mathbb{A}_3\cup \dots \cup\mathbb{A}_{k-1} , 
$$
and 
$$
\Omega_4=\mathbb{A}_1,
$$
with 
$$
 \mathbb{A}_{k}=\Omega\cap\Big(B_{r_{k}}\setminus B_{r_{k+1}}\Big)\quad \mbox{and}\quad r_{k}=16^{1-k}.
$$

\begin{center}
\begin{tikzpicture}[node distance = 0.1cm]
\draw[black,line width=1.0] (4.242,-4.242) arc (-45:225:6)
  node at (-3.6, 3.8) [red, font=\fontsize{17}{17}\selectfont] {$\Omega_4$};
 \draw [thick] (-4.242,-4.242) -- (0,0);
  \draw [thick] (4.242,-4.242) -- (0,0);
  \draw [thick]  [dashed] [->,thick](-8,0)--(8,0) node [anchor=north west] {$x'$};
    \draw [thick]  [dashed] [->,thick](0,-5.5)--(0,8) node [anchor=north west] {$x_n$};
    \draw[blue,line width=1.0] (3.182,-3.182) arc (-45:225:4.5)
     node at (-2.4, 2.8) [red, font=\fontsize{17}{17}\selectfont] {$\Omega_3$};
    \draw[green,line width=1.0] (2.121,-2.121) arc (-45:225:3)
     node at (-1.2, 2.2) [red, font=\fontsize{17}{17}\selectfont] {$\Omega_1$};
      \draw[blue,line width=1.0] (1.414,-1.414) arc (-45:225:2)    
      node at (0.4, 1) [red, font=\fontsize{17}{17}\selectfont] {$\Omega_2$};
       \path (0,2.5) [thick,fill=black]  circle(1.3 pt) node at (0.2, 2.5) [ font=\fontsize{9}{9}\selectfont] {$p_k$};
\draw [thick] [dashed] [red, line width=1.2](0, 2.5) circle (0.4)
node at (1, 2.35) [red, font=\fontsize{9}{9}\selectfont] {$B_{\varepsilon_k}(p_k)$};
        \draw[red,line width=1.2] [dashed](-4,-2) arc (280:315:7.5);
        \draw[red,line width=1.2] [dashed](4,-2) arc (260:225:7.5);
         \node [below=1cm, align=flush center,text width=8cm] at (0,-5)
        {$Figure$ 4 \fontsize{15}{15}\selectfont };
        \def\mypath{(0,0) -- (-3.182, -3.182) arc(225:210:4.5) -- (-4,-2) arc (280:315:7.5)}
       \pattern[pattern color=gray!80, pattern=north west lines]  \mypath;
          \def\mypath{(4,-2) arc (260:225:7.5) -- (0, 0)-- (3.182,-3.182)arc(315:330:4.5) }
       \pattern[pattern color=gray!80, pattern=north west lines]  \mypath;
\end{tikzpicture}
\end{center}

Then 
\begin{equation}\label{eq. J1-J4 decompose}
\begin{aligned}
U(p_k)=&\int_{\Omega_1} G(p_k, y) U^{-\gamma} (y)dy+\int_{\Omega_2} G(p_k, y) U^{-\gamma} (y)dy\\
&+\int_{\Omega_3} G(p_k, y) U^{-\gamma} (y)dy+\int_{\Omega_4} G(p_k, y) U^{-\gamma} (y)dy\\
=:& J_1+J_2+J_3+J_4.
\end{aligned}
\end{equation}

Let's also denote 
$$
I_j:=\int_{\mathbb{A}_j} G(p_k, y) U^{-\gamma} (y)dy,\quad j=1, 2, \cdots, k, \cdots.
$$

 \begin{itemize}
        \item Step 1: Estimate the term $J_1$ in \eqref{eq. J1-J4 decompose}.
 
        Noting that $p_k \in \Omega_1$ is a singular point,  we compute the integral in $\mathbb{A}_k \cup B_{\varepsilon_k}(p_k)$ and 
     $B_{\varepsilon_k}(p_k)$ separately, where $\varepsilon>0$ is a uniform and sufficiently small  constant, and  $\varepsilon_k= \varepsilon \cdot 16^{-k}$. 

     Let $y=(r\sin \theta, r\cos \theta)$, as described  in Example~\ref{ex. 2d how large is the critical angle}, and set
     $$
     D_{1, k}= (\mathbb{A}_k \backslash B_{\varepsilon_k}(p_k)) \cap \left\{\theta \in [-\frac{1+\gamma}{4}\pi, \frac{1+\gamma}{4}\pi] \mid cos(\phi \theta)\leq a_k^{\frac{1+\gamma}{1-\gamma}}\right\}
     $$
     and 
     $$
     D_{2, k}= (\mathbb{A}_k \backslash B_{\varepsilon_k}(p_k)) \cap \left\{\theta \in [-\frac{1+\gamma}{4}\pi, \frac{1+\gamma}{4}\pi] \mid cos(\phi \theta)> a_k^{\frac{1+\gamma}{1-\gamma}}\right\}.
     $$

In Figure 4,  $D_{1,k}$ denotes the gray shaded region within $\Omega_1$,  while  $D_{2,k}$  represents the portion of  $\Omega_1$ that excludes both the shadowed region and the ball $B_{\varepsilon_k}(p_k)$.

\begin{eqnarray}\label{eq. J1 decompose n=2}
\begin{aligned}
    J_1=& \int_{\mathbb{A}_k} G(p_k, y) U^{-\gamma} (y)dy \\
    =& \int_{D_{1, k}} G(p_k, y) U^{-\gamma} (y)dy +\int_{D_{2, k}} G(p_k, y) U^{-\gamma} (y)dy +\int_{B_{\varepsilon_k}(p_k)} G(p_k, y) U^{-\gamma} (y)dy \\
   =:& J_{11}+J_{12}+J_{13}.
\end{aligned}
\end{eqnarray}

Since
\begin{equation*}
    2|p_k|=16^{1-k}=r_k>r_{k+1}\mbox{ and }\ln \frac{8|p_k|}{|p_k-y|}\sim 1\mbox{ in }D_{1, k}\cup D_{2, k},
\end{equation*}
and that $\frac{r}{|p_k|}\sim 1$ in $\mathbb{A}_k$, then by Lemma~\ref{lem. green function for small y} and Example~\ref{ex. growth rate in annuli n=2}, we obtain  
\begin{eqnarray}\label{eq. J11 n=2}
\begin{aligned}
    J_{11}=& \int_{D_{1, k}} G(p_k, y) U^{-\gamma} (y)dy \\
    \sim &  \int_{D_{1, k}} \ln \frac{8|p_k|}{|p_k-y|} \left(\frac{r}{|p_k|}\right)^\phi \cos(\phi \theta) \left(16^{-k\phi} \left(\cos(\phi\theta)\right)^\phi \right)^{-\gamma} dy\\
    \sim & 16^{k\phi \gamma} \int_{D_{1, k}} \left(\cos(\phi \theta)\right)^{1-\phi\gamma} dy\\
    \leq & 16^{k\phi \gamma}\cdot 16^{-2k} \int_{0}^{a_k^{\frac{1+\gamma}{1-\gamma}}} t^{1-\phi\gamma} dt \quad(\mbox{here }t:=\cos(\phi \theta))\\
    \sim & 16^{-k\phi} a_k^{\frac{2}{1-\gamma}},
\end{aligned}
\end{eqnarray}
where we have use the fact that $\phi=\frac{2}{1+\gamma}$ and $1-\phi \gamma=\frac{1-\gamma}{1+\gamma}\in (-1, 0)$. 

For the term $J_{12}$,  by Lemma~\ref{lem. green function for small y} and Example~\ref{ex. growth rate in annuli n=2}, we have 
\begin{eqnarray}\label{eq. J12 n=2}
\begin{aligned}
    J_{12}=& \int_{D_{2, k}} G(p_k, y) U^{-\gamma} (y)dy \\
    \sim &  \int_{D_{2, k}} \ln \frac{8|p_k|}{|p_k-y|} \left(\frac{r}{|p_k|}\right)^\phi \cos(\phi \theta) \left(16^{-k\phi} a_k \cos(\phi\theta) \right)^{-\gamma} dy\\
    \sim & 16^{k\phi \gamma} a_k^{-\gamma}\int_{D_{2, k}} \left(\cos(\phi \theta)\right)^{1-\gamma} dy\\
    \leq & 16^{k\phi \gamma} a_k^{-\gamma}16^{-2k} \int_{a_k^{\frac{1+\gamma}{1-\gamma}}}^{1} t^{1-\gamma} dt \\
    \sim & 
    \begin{cases}
     16^{-k\phi} a_k^{-\gamma} , \quad &\mbox{if} \,\, 1<\gamma<2,\\
    16^{-k\phi} a_k^{-\gamma} \ln a_k, \quad &\mbox{if} \,\, \gamma =2,\\
     16^{-k\phi} a_k^{\frac{2}{1-\gamma}}, \quad &\mbox{if} \,\, \gamma >2.
    \end{cases}
\end{aligned}
\end{eqnarray}
Notice that in the last step, we have used the fact that
\begin{equation*}
    \int_{\lambda}^{1}t^{1-\gamma}dt\sim\begin{cases}
        1, \quad &\mbox{if} \,\, 1<\gamma<2,\\
    -\ln\lambda, \quad &\mbox{if} \,\, \gamma =2,\\
     \lambda^{2-\gamma}, \quad &\mbox{if} \,\, \gamma >2.
    \end{cases}
\end{equation*}

For the last term $J_{13}$  in \eqref{eq. J1 decompose n=2}, since  $cos(\phi \theta)\sim 1$ in $B_{\varepsilon_k}(p_k)$, one can  derive from Lemma~\ref{lem. green function for small y} and Example~\ref{ex. growth rate in annuli n=2} that 
\begin{eqnarray}\label{eq. J13 n=2}
\begin{aligned}
    J_{13}=& \int_{B_{\varepsilon_k}(p_k)} G(p_k, y) U^{-\gamma} (y)dy \\
    =& \int_{B_{\varepsilon_k}(p_k)} \ln \frac{8|p_k|}{|p_k-y|} \left(\frac{r}{|p_k|}\right)^\phi \cos(\phi \theta) \left(16^{-k\phi} a_k \cos(\phi\theta) \right)^{-\gamma} dy\\
    \sim &  16^{k\phi \gamma} a_k^{-\gamma}\int_{B_{\varepsilon_k}(p_k)} \ln \frac{8|p_k|}{|p_k-y|} dy\\
     \sim &  16^{k\phi \gamma} a_k^{-\gamma} \int_0^{2\pi}\int_0^{\varepsilon_k} \ln( \frac{8|p_k|}{\rho}) \rho d\rho d\theta\\
     \sim & 16^{k\phi \gamma} a_k^{-\gamma} |p_k|^2 \int_0^{\frac{\varepsilon}{64}} (-\ln \rho_1) \rho_1 d\rho_1\quad(\mbox{here }\rho_{1}:=\frac{\rho}{8|p_{k}|})\\
     \sim &16^{k\phi \gamma-2k} a_k^{-\gamma} \\
     \sim & 16^{-k\phi}   a_k^{-\gamma}, 
      \end{aligned}
\end{eqnarray}
where we have used the fact that 
$\frac{\varepsilon_k}{8|p_k|}=\frac{\varepsilon}{64}$.

Combining \eqref{eq. J1 decompose n=2}, \eqref{eq. J11 n=2}, \eqref{eq. J12 n=2} and \eqref{eq. J13 n=2} yields 
\begin{equation}\label{eq. J1 n=2}
    J_1\sim 
    \begin{cases}
     16^{-k\phi} a_k^{-\gamma}, \quad &\mbox{if}\,\, 1<\gamma <2,\\
    16^{-k\phi} a_k^{-2} \ln a_k, \quad &\mbox{if} \,\, \gamma =2,\\
     16^{-k\phi} a_k^{\frac{2}{1-\gamma}}, \quad &\mbox{if}\,\, \gamma >2.
    \end{cases}
\end{equation}

   \item Step 2: Estimate the term $J_2$ in \eqref{eq. J1-J4 decompose}.

Denote a shadowed region
 $$
     H_{1, j}= \mathbb{A}_j \cap \left\{\theta \in [-\frac{1+\gamma}{4}\pi, \frac{1+\gamma}{4}\pi] \mid cos(\phi \theta)\leq a_k^{\frac{1+\gamma}{1-\gamma}}\right\}
     $$
     and an unshadowed region
     $$
     H_{2, j}= \mathbb{A}_j  \cap \left\{\theta \in [-\frac{1+\gamma}{4}\pi, \frac{1+\gamma}{4}\pi] \mid cos(\phi \theta)> a_k^{\frac{1+\gamma}{1-\gamma}}\right\}.
     $$
Since for $j\geq k+1$:
\begin{equation*}
    \ln \frac{8|p_k|}{|p_k-y|}\sim 1\mbox{ and }\frac{r}{|p_k|}\sim 16^{k-j}\mbox{ in }H_{1, j}\cup H_{2, j},
\end{equation*}
then by Lemma~\ref{lem. green function for small y} and Example~\ref{ex. growth rate in annuli n=2} we have 
\begin{eqnarray*}
\begin{aligned}
I_j=&\int_{\mathbb{A}_j} G(p_k, y) U^{-\gamma} (y) dy\\
=& \int_{H_{1, j}} G(p_k, y) U^{-\gamma} (y) dy + \int_{H_{2, j}} G(p_k, y) U^{-\gamma} (y) dy\\
\sim & \int_{H_{1, j}}  \ln \frac{8|p_k|}{|p_k-y|} \left(\frac{r}{|p_k|}\right)^\phi \cos(\phi \theta) \left(16^{-k\phi} \left(\cos(\phi\theta)\right)^\phi \right)^{-\gamma} dy\\\\
& +  \int_{H_{2, j}}\ln \frac{8|p_k|}{|p_k-y|} \left(\frac{r}{|p_k|}\right)^\phi \cos(\phi \theta) \left(16^{-k\phi} a_j \cos(\phi\theta) \right)^{-\gamma} dy\\
\sim & 16^{k\phi \gamma+(k-j)\phi} \int_{H_{1, j}} \left(\cos(\phi\theta)\right)^{1-\phi \gamma} dy 
+16^{k\phi \gamma+(k-j)\phi}  a_j^{-\gamma} \int_{H_{2, j}} \left(\cos(\phi\theta)\right)^{1- \gamma} dy \\
\sim & 16^{k\phi \gamma+(k-j)\phi} \cdot16^{-2j}\int_{0}^{a_{j}^{\frac{1+\gamma}{1-\gamma}}} t^{1-\phi \gamma} dt 
+16^{k\phi \gamma+(k-j)\phi}  a_j^{-\gamma} 16^{-2j}\int_{a_{j}^{\frac{1+\gamma}{1-\gamma}}}^{1} t^{1- \gamma} dt \\
\sim  & 
    \begin{cases}
    16^{2k-(\phi+2)j} a_j^{\frac{2}{1-\gamma}}+ 16^{2k-(\phi+2)j} a_j^{-\gamma}, \quad &\mbox{if}\,\, 1<\gamma <2,\\
    16^{2k-(\phi+2)j} a_j^{-2}+
    16^{2k-(\phi+2)j} a_j^{-2} \ln a_j , \quad &\mbox{if} \,\,\gamma =2,\\
    16^{2k-(\phi+2)j} a_j^{\frac{2}{1-\gamma}}+ 16^{2k-(\phi+2)j} a_j^{\frac{2}{1-\gamma}}, \quad &\mbox{if}\,\, \gamma >2.
    \end{cases}\\
    \sim  & 
    \begin{cases}
    16^{2k-(\phi+2)j} a_j^{-\gamma}, \quad &\mbox{if}\,\, 1<\gamma <2,\\
    16^{2k-(\phi+2)j} a_j^{-2} \ln a_j , \quad &\mbox{if} \,\,\gamma =2,\\
    16^{2k-(\phi+2)j} a_j^{\frac{2}{1-\gamma}}, \quad &\mbox{if}\,\, \gamma >2.
    \end{cases}
\end{aligned}
\end{eqnarray*}
It follows from  the definition of $I_j$ and $J_2$ that 
\begin{equation}\label{eq. J2 n=2}
\begin{aligned}
J_2=&\sum_{j=k+1}^{\infty} I_j \\
\lesssim &  \begin{cases}
     16^{2k} \sum_{j=k+1}^{\infty} 16^{-(\phi+2)j} 
    a_j^{-\gamma}, \quad &\mbox{if}\,\, 1<\gamma <2,\\
16^{2k}  \sum_{j=k+1}^{\infty} 16^{-(\phi+2)j}  a_j^{-\gamma} \ln a_j , \quad &\mbox{if} \,\,\gamma =2,\\
    16^{2k} \sum_{j=k+1}^{\infty} 16^{-(\phi+2)j} 
    a_j^{\frac{2}{1-\gamma}}, \quad &\mbox{if}\,\, \gamma >2
    \end{cases}\\
    \sim &
    \begin{cases}
    16^{-k\phi}a_k^{-\gamma}, \quad &\mbox{if}\,\, 1<\gamma <2,\\
    16^{-k\phi} a_k^{-2} \ln a_k , \quad &\mbox{if} \,\,\gamma =2,\\
16^{-k\phi}a_k^{\frac{2}{1-\gamma}}, \quad &\mbox{if}\,\, \gamma >2.
    \end{cases}
\end{aligned}
\end{equation}
Here, we have used the monotonicity of $a_{k}$ (recall Lemma~\ref{lem. a_k is increasing}) in the last step.

   \item Step 3: Estimate the term $J_3$ in \eqref{eq. J1-J4 decompose}.

Note  that for $2\leq j \leq k-1$:
\begin{equation*}
    (1-r) \sim 1\mbox{ and }\frac{|p_k|}{r}\sim 16 ^{k-j}\mbox{ in }H_{1, j}\cup H_{2, j},
\end{equation*}
then by Lemma~\ref{lem. green function for large y} and Example~\ref{ex. growth rate in annuli n=2} we have 
\begin{eqnarray*}
\begin{aligned}
I_j=&\int_{\mathbb{A}_j} G(p_k, y) U^{-\gamma} (y) dy\\
=& \int_{H_{1, j}} G(p_k, y) U^{-\gamma} (y) dy + \int_{H_{2, j}} G(p_k, y) U^{-\gamma} (y) dy\\
\sim & \int_{H_{1, j}}   \left(\frac{|p_k|}{r}\right)^\phi \cos(\phi \theta) (1-r)\left(16^{-j\phi} \left(\cos(\phi\theta)\right)^\phi \right)^{-\gamma} dy\\
& +  \int_{H_{2, j}} \left(\frac{|p_k|}{r}\right)^\phi \cos(\phi \theta) (1-r)\left(16^{-j\phi} a_j  \cos(\phi\theta) \right)^{-\gamma} dy\\
\sim & 16^{j\phi \gamma+(j-k)\phi} \int_{H_{1, j}} \cos(\phi\theta)^{1-\phi \gamma} dy 
+16^{j\phi \gamma+(j-k)\phi}  a_j^{-\gamma} \int_{H_{2, j}} \left(\cos(\phi\theta)\right)^{1- \gamma} dy \\
\sim & 16^{j\phi \gamma+(j-k)\phi}
\cdot16^{-2j}\int_{0}^{a_j^{\frac{1+\gamma}{1-\gamma}}} t^{1- \phi \gamma} dt+ 16^{j\phi \gamma+(j-k)\phi}a_j^{-\gamma}16^{-2j}\int_{a_j^{\frac{1+\gamma}{1-\gamma}}}^{1} t^{1- \gamma} dt
\\
\sim  & 
    \begin{cases}
    16^{-k\phi} a_j^{\frac{2}{1-\gamma}}+16^{-k\phi} a_j^{-\gamma}, \quad &\mbox{if}\,\, 1<\gamma <2,\\
    16^{-k\phi} a_j^{-2}+16^{-k\phi}
     a_j^{-2} \ln a_j , \quad &\mbox{if} \,\,\gamma =2,\\
    16^{-k\phi} a_j^{\frac{2}{1-\gamma}}+16^{-k\phi} a_j^{\frac{2}{1-\gamma}}, \quad &\mbox{if}\,\, \gamma >2
    \end{cases}\\
    \sim  & 
    \begin{cases}
    16^{-k\phi} a_j^{-\gamma}, \quad &\mbox{if}\,\, 1<\gamma <2,\\
    16^{-k\phi}
     a_j^{-2} \ln a_j, \quad &\mbox{if} \,\,\gamma =2,\\
    16^{-k\phi} a_j^{\frac{2}{1-\gamma}}, \quad &\mbox{if}\,\, \gamma >2,
    \end{cases}
\end{aligned}
\end{eqnarray*}
where we have used $\phi(\gamma+1)-2=0$ in the computation. Therefore, from the definition of $J_3$ and $I_j$, one has 
\begin{eqnarray}\label{eq. J3 n=2}
    \begin{aligned}
     J_3=& \sum_{j=2}^{k-1} I_j 
     \sim  \begin{cases}
    16^{-k\phi} \sum_{j=2}^{k-1}  a_j^{-\gamma}, \quad &\mbox{if}\,\, 1<\gamma <2,\\
    16^{-k\phi} \sum_{j=2}^{k-1}
    
     a_j^{-2} \ln a_j , \quad &\mbox{if} \,\,\gamma =2, \\
   16^{-k\phi} \sum_{j=2}^{k-1}  a_j^{\frac{2}{1-\gamma}}, \quad &\mbox{if}\,\, \gamma >2.
     \end{cases}
    \end{aligned}
\end{eqnarray}

  \item Step 4: Estimate the term $J_4$ in \eqref{eq. J1-J4 decompose}.

In $\mathbb{A}_1$, since $\frac{|p_k|}{r} \sim 16^{-k}$, and $U(y) \sim \left(dist(y, \partial \Omega)\right)^{\frac{2}{1+\gamma}}  $ by \eqref{eq. estimate in A1}, then by Lemma~\ref{lem. green function for large y} we have

  \begin{eqnarray}\label{eq. J4 n=2}
    \begin{aligned}
    J_4=&\int_{\mathbb{A}_1} G(p_k, y) U^{-\gamma} (y) dy\\
    \sim & \int_{\mathbb{A}_1} \left(\frac{|p_k|}{r}\right)^\phi \cos(\phi \theta) (1-r) \left(dist  (y, \partial \Omega)\right) ^\frac{-2\gamma}{1+\gamma}dy\\
    \sim & 16^{-k\phi} \int_{\mathbb{A}_1}  \left(dist (y, \partial \Omega)\right)^{1-\frac{2\gamma}{1+\gamma}}dy\\
    \sim & \begin{cases}
   16^{-k\phi} a_1^{-\gamma}, \quad &\mbox{if}\,\, 1<\gamma <2,\\
      16^{-k\phi}
     a_1^{-2} \ln a_1, \quad &\mbox{if} \,\,\gamma =2, \\
   16^{-k\phi}a_1^{\frac{2}{1-\gamma}}, \quad &\mbox{if}\,\, \gamma >2,
     \end{cases}
    \end{aligned}
    \end{eqnarray}
where we have used the fact $1-\frac{2\gamma}{1+\gamma}>-1$. 
\item Step 5: We collect the estimates \eqref{eq. J1 n=2},\eqref{eq. J2 n=2},\eqref{eq. J3 n=2},\eqref{eq. J4 n=2}, and then obtain
\begin{equation*}
    U(p_{k})\sim \left\{\begin{aligned}
   &16^{-k\phi} \sum_{j=1}^{k} a_j^{-\gamma}, &\mbox{if }&1<\gamma <2,\\
      &16^{-k\phi} \sum_{j=1}^{k}a_j^{-2} \ln a_j,&\mbox{if }&\gamma =2, \\
   &16^{-k\phi} \sum_{j=1}^{k} a_j^{\frac{2}{1-\gamma}},&\mbox{if }&\gamma >2.
     \end{aligned}\right.
\end{equation*}
Recalling that $U(p_{k})=16^{-k\phi}a_{k}$ from \eqref{eq. a_k definition}, we have  thus complete the proof of  Lemma~\ref{lem. discrete integral equation} in the case $n=2$.

\end{itemize}

\subsection{Case \texorpdfstring{$n=3$}{Lg}}
Now we prove Lemma~\ref{lem. discrete integral equation} in the case $n\geq3$. We adopt the same  notations in $n=2$, including:
\begin{equation*}
    \Omega_{1}-\Omega_{4},J_{1}-J_{4},J_{11}-J_{13},\mbox{ and }I_{j}\mbox{'s}
\end{equation*}
here. The computation here differs from the one for $n=2$, mainly because the Green function takes a different form in $n\geq3$. In addition, the scaling in higher  dimensions produces different scaling factors in the volume form.

 \begin{itemize}
        \item Step 1: Estimate the term $J_1$ in \eqref{eq. J1-J4 decompose}.
 
        Noting that $p_k \in \Omega_1$ is a singular point,  then we compute the integral in $\mathbb{A}_k \cup B_{\varepsilon_k}(p_k)$ and 
     $B_{\varepsilon_k}(p_k)$ respectively, where $\varepsilon>0$ is a uniform and sufficiently small  constant, and  $\varepsilon_k= \varepsilon \cdot 16^{-k}$. 

     Let $y=r\vec{\theta}$, and let $\eta(y)=\eta_{\Sigma}(\vec{\theta})$ be the defining function constructed in Definition~\ref{def. defining function eta}. We denote 
     $$
     D_{1, k}= \Big(\mathbb{A}_k \backslash B_{\varepsilon_k}(p_k)\Big) \cap \Big\{\eta(y)\leq a_k^{\frac{1+\gamma}{1-\gamma}}\Big\}
     $$
     and 
     $$
     D_{2, k}= \Big(\mathbb{A}_k \backslash B_{\varepsilon_k}(p_k)\Big) \cap \Big\{\eta(y)> a_k^{\frac{1+\gamma}{1-\gamma}}\Big\}.
     $$
As in the case $n=2$, here, in Figure 4,  $D_{1,k}$ still denotes the gray shaded region within $\Omega_1$,  while  $D_{2,k}$ still represents the portion of  $\Omega_1$ that excludes both the shadowed region and the ball $B_{\varepsilon_k}(p_k)$.
\begin{eqnarray}\label{eq. J1 decomposition n>2}
\begin{aligned}
    J_1=& \int_{\mathbb{A}_k} G(p_k, y) U^{-\gamma} (y)dy \\
    =& \int_{D_{1, k}} G(p_k, y) U^{-\gamma} (y)dy +\int_{D_{2, k}} G(p_k, y) U^{-\gamma} (y)dy +\int_{B_{\varepsilon_k}(p_k)} G(p_k, y) U^{-\gamma} (y)dy \\
   =:& J_{11}+J_{12}+J_{13}.
\end{aligned}
\end{eqnarray}

Since
\begin{equation*}
    2|p_k|=16^{1-k}=r_k>r_{k+1}\mbox{ and }|p_{k}-y|^{2-n}\sim 16^{k(n-2)}\mbox{ in }D_{1, k}\cup D_{2, k},
\end{equation*}
and that $\frac{r}{|p_k|}\sim 1$ in $\mathbb{A}_k$, then by Lemma~\ref{lem. green function for small y} and Corollary~\ref{cor. estimate in a cone}, we obtain  
\begin{eqnarray}\label{eq. J11 n>2}
\begin{aligned}
    J_{11}=& \int_{D_{1, k}} G(p_k, y) U^{-\gamma} (y)dy \\
    \sim &  \int_{D_{1, k}} |p_{k}-y|^{2-n} \left(\frac{r}{|p_k|}\right)^\phi \eta(y) \left(16^{-k\phi} \eta^\phi (y)\right)^{-\gamma} dy\\
    \sim & 16^{k(n-2+\phi\gamma)} \int_{D_{1, k}} \eta^{1-\phi\gamma} (y)dy\\
    \sim & 16^{k(n-2+\phi\gamma)} \cdot16^{-kn}\int_{0}^{a_k^{\frac{1+\gamma}{1-\gamma}}} t^{1-\phi\gamma} dt \\
    \sim & 16^{-k\phi} a_k^{\frac{2}{1-\gamma}},
\end{aligned}
\end{eqnarray}
where we have use the fact that $\phi=\frac{2}{1+\gamma}$ and $1-\phi \gamma=\frac{1-\gamma}{1+\gamma}\in (-1, 0)$. Besides, Lemma~\ref{lem. definiting function in a patch} (e)(f) is also used in the following estimate, which was used in the computation above:
\begin{equation*}
    \int_{D_{1, k}} \eta^{1-\phi\gamma} (y)dy\sim16^{-kn}\int_{0}^{a_k^{\frac{1+\gamma}{1-\gamma}}} t^{1-\phi\gamma} dt.
\end{equation*}

For the term $J_{12}$,  by Lemma~\ref{lem. green function for small y} and Corollary~\ref{cor. estimate in a cone}, we have 
\begin{eqnarray}\label{eq. J12 n>2}
\begin{aligned}
    J_{12}=& \int_{D_{2, k}} G(p_k, y) U^{-\gamma} (y)dy \\
    \sim &  \int_{D_{2, k}} |p_{k}-y|^{2-n} \left(\frac{r}{|p_k|}\right)^\phi \eta(y) \left(16^{-k\phi} a_k \eta(y) \right)^{-\gamma} dy\\
    \sim & 16^{k(n-2+\phi \gamma)} a_k^{-\gamma}\int_{D_{2, k}} \eta^{1-\gamma}(y) dy\\
    \leq & 16^{k(n-2+\phi \gamma)} a_k^{-\gamma} 16^{-kn}\int_{a_k^{\frac{1+\gamma}{1-\gamma}}}^{1} t^{1-\gamma} dt \\
    \sim & 
    \begin{cases}
    16^{-k\phi} a_k^{-\gamma}, \quad &\mbox{if} \,\, 1<\gamma<2,\\
    16^{-k\phi} a_k^{-2} \ln a_k, \quad &\mbox{if} \,\, \gamma =2,\\
     16^{-k\phi} a_k^{\frac{2}{1-\gamma}}, \quad &\mbox{if} \,\, \gamma >2.
    \end{cases}
\end{aligned}
\end{eqnarray}
In the computation above, the estimate
\begin{equation*}
    \int_{D_{2, k}} \eta^{1-\gamma} (y) dy\sim16^{-kn}\int_{a_k^{\frac{1+\gamma}{1-\gamma}}}^{1} t^{1-\gamma} dt
\end{equation*}
also follows from Lemma~\ref{lem. definiting function in a patch} (e)(f).

For the last term $J_{13}$  in \eqref{eq. J1-J4 decompose}, since  $\eta(y)\sim 1$ in $B_{\varepsilon_k}(p_k)$, one can derive from Lemma~\ref{lem. green function for small y} and Corollary~\ref{cor. estimate in a cone} that 
\begin{eqnarray}\label{eq. J13 n>2}
\begin{aligned}
    J_{13}=& \int_{B_{\varepsilon_k}(p_k)} G(p_k, y) U^{-\gamma} (y)dy \\
    =& \int_{B_{\varepsilon_k}(p_k)} |p_{k}-y|^{2-n} \left(\frac{r}{|p_k|}\right)^\phi \eta(y) \left(16^{-k\phi} a_k \eta(y) \right)^{-\gamma} dy\\
    \sim &  16^{k\phi \gamma} a_k^{-\gamma}\int_{B_{\varepsilon_k}(p_k)} |p_{k}-y|^{2-n} dy\\
     \sim &  16^{k\phi \gamma} a_k^{-\gamma} \int_{\mathbb{S}^{n-1}}\int_0^{\varepsilon_k} \rho^{2-n} \rho^{n-1}d\rho d\theta\\
     \sim & 16^{k\phi \gamma} a_k^{-\gamma} 16^{-2k} \int_0^{\varepsilon} \rho_1 d\rho_1\quad(\mbox{here }\rho_{1}:=16^{k}\rho)\\
     \sim & 16^{-k\phi}   a_k^{-\gamma}.
      \end{aligned}
\end{eqnarray}

Combining \eqref{eq. J1 decomposition n>2},\eqref{eq. J11 n>2},\eqref{eq. J12 n>2},\eqref{eq. J13 n>2} yields 
\begin{equation}\label{eq. J1 n>2}
    J_1\sim 
    \begin{cases}
    16^{-k\phi} a_k^{-\gamma}, \quad &\mbox{if} \,\, 1<\gamma<2,\\
    16^{-k\phi} a_k^{-2} \ln a_k, \quad &\mbox{if} \,\, \gamma =2,\\
     16^{-k\phi} a_k^{\frac{2}{1-\gamma}}, \quad &\mbox{if}\,\, \gamma >2.
    \end{cases}
\end{equation}

   \item Step 2: Estimate the term $J_2$ in \eqref{eq. J1-J4 decompose}.

Denote a shadowed region
 $$
     H_{1, j}= \mathbb{A}_j \cap \{\eta(y)\leq a_k^{\frac{1+\gamma}{1-\gamma}}\}
     $$
     and an unshadowed region
     $$
     H_{2, j}= \mathbb{A}_j  \cap \{\eta(y)> a_k^{\frac{1+\gamma}{1-\gamma}}\}.
     $$
Let $y=r\vec{\theta}\in \mathbb{A}_j$ for $j \geq k+1$. Since
\begin{equation*}
    |p_k-y|^{2-n}\sim 16^{k(n-2)}\mbox{ and }\frac{r}{|p_k|}\sim 16^{k-j}\quad\mbox{in }H_{1, j}\cup H_{2, j},
\end{equation*}
then by Lemma~\ref{lem. green function for small y} and Corollary~\ref{cor. estimate in a cone}, we have 
\begin{eqnarray*}
\begin{aligned}
I_j=&\int_{\mathbb{A}_j} G(p_k, y) U^{-\gamma} (y) dy\\
=& \int_{H_{1, j}} G(p_k, y) U^{-\gamma} (y) dy + \int_{H_{2, j}} G(p_k, y) U^{-\gamma} (y) dy\\
\sim & \int_{H_{1, j}}  |p_k-y|^{2-n} \left(\frac{r}{|p_k|}\right)^\phi \eta(y) \left(16^{-k\phi} \eta^\phi (y) \right)^{-\gamma} dy\\
& +  \int_{H_{2, j}}|p_k-y|^{2-n}\left(\frac{r}{|p_k|}\right)^\phi \eta(y) \left(16^{-k\phi} a_j \eta(y) \right)^{-\gamma} dy\\
\sim & 16^{k(n-2+\phi \gamma)+(k-j)\phi} \int_{H_{1, j}} \eta^{1-\phi \gamma} (y)dy \\
&+16^{k(n-2+\phi \gamma)+(k-j)\phi}  a_j^{-\gamma} \int_{H_{2, j}} \eta^{1- \gamma}(y) dy \\
\sim & 16^{k(n-2+\phi \gamma)+(k-j)\phi} \cdot16^{-jn}\int_{0}^{a_{k}^{\frac{1+\gamma}{1-\gamma}}} t^{1-\phi \gamma} dt \\
&+16^{k(n-2+\phi \gamma)+(k-j)\phi}  a_j^{-\gamma} 16^{-jn}\int_{a_{k}^{\frac{1+\gamma}{1-\gamma}}}^{1} t^{1- \gamma} dt \\
\sim  & 
    \begin{cases}16^{(k-j)(n+\phi)-k\phi} a_j^{\frac{2}{1-\gamma}}+
    16^{(k-j)(n+\phi)-k\phi} a_j^{-\gamma} , \quad &\mbox{if} \,\,1<\gamma<2,\\
    16^{(k-j)(n+\phi)-k\phi} a_j^{-2}+
    16^{(k-j)(n+\phi)-k\phi} a_j^{-2} \ln a_j , \quad &\mbox{if} \,\,\gamma =2,\\
    16^{(k-j)(n+\phi)-k\phi} a_j^{\frac{2}{1-\gamma}}+ 16^{(k-j)(n+\phi)-k\phi} a_j^{\frac{2}{1-\gamma}}, \quad &\mbox{if}\,\, \gamma >2
    \end{cases}\\
    \sim  & 
    \begin{cases}16^{kn-(n+\phi)j} a_j^{-\gamma} , \quad &\mbox{if} \,\,1<\gamma<2,\\
    16^{kn-(n+\phi)j} a_j^{-2} \ln a_j , \quad &\mbox{if} \,\,\gamma =2,\\
    16^{kn-(n+\phi)j} a_j^{\frac{2}{1-\gamma}}, \quad &\mbox{if}\,\, \gamma >2.
    \end{cases}
\end{aligned}
\end{eqnarray*}
It follows from the monotonicity of $a_{j}$ (Lemma~\ref{lem. a_k is increasing}) and from the definition of $I_j$ and $J_2$ that 
\begin{equation}\label{eq. J2 n>2}
\begin{aligned}
J_2=&\sum_{j=k+1}^{\infty} I_j \\
\sim &  \begin{cases}16^{kn}  \sum_{j=k+1}^{\infty} 16^{-(n+\phi)j}  a_j^{-\gamma} , \quad &\mbox{if} \,\,1<\gamma<2,\\
16^{kn}  \sum_{j=k+1}^{\infty} 16^{-(n+\phi)j}  a_j^{-2} \ln a_j , \quad &\mbox{if} \,\,\gamma =2,\\
    16^{kn} \sum_{j=k+1}^{\infty} 16^{-(n+\phi)j} 
    a_j^{\frac{2}{1-\gamma}}, \quad &\mbox{if}\,\, \gamma >2
    \end{cases}\\
    \lesssim &
    \begin{cases}16^{-k\phi} a_k^{-\gamma} , \quad &\mbox{if} \,\,1<\gamma<2,\\
    16^{-k\phi} a_k^{-2} \ln a_k , \quad &\mbox{if} \,\,\gamma =2,\\
16^{-k\phi}a_k^{\frac{2}{1-\gamma}}, \quad &\mbox{if}\,\, \gamma >2.
    \end{cases}
\end{aligned}
\end{equation}

   \item Step 3: Estimate the term $J_3$ in \eqref{eq. J1-J4 decompose}.

Note  that for $2\leq j \leq k-1$:
\begin{equation*}
    (1-r) \sim 1\mbox{ and }\frac{|p_k|^{\phi}}{r^{\phi+n-2}}\sim 16 ^{(j-k)\phi+(n-2)j}\quad\mbox{in }H_{1, j}\cup H_{2, j},
\end{equation*}
then by Lemma~\ref{lem. green function for large y} and Corollary~\ref{cor. estimate in a cone}, we have 
\begin{eqnarray*}
\begin{aligned}
I_j=&\int_{\mathbb{A}_j} G(p_k, y) U^{-\gamma} (y) dy\\
=& \int_{H_{1, j}} G(p_k, y) U^{-\gamma} (y) dy + \int_{H_{2, j}} G(p_k, y) U^{-\gamma} (y) dy\\
\sim & \int_{H_{1, j}}   \frac{|p_k|^{\phi}}{r^{\phi+n-2}}(1-r)\eta(y)\cdot\left(16^{-j\phi} \eta^\phi(y) \right)^{-\gamma} dy\\
& +  \int_{H_{2, j}} \frac{|p_k|^{\phi}}{r^{\phi+n-2}}(1-r)\eta(y)\cdot\left(16^{-j\phi} a_j  \eta(y) \right)^{-\gamma} dy\\
\sim & 16^{nj-k\phi} \int_{H_{1, j}} \eta^{1-\phi \gamma} (y) dy+16^{nj-k\phi}  a_j^{-\gamma} \int_{H_{2, j}} \eta^{1- \gamma} (y) dy \\
\sim & 16^{nj-k\phi}\cdot 16^{-nj}\int_{0}^{a_{k}^{\frac{1+\gamma}{1-\gamma}}} t^{1-\phi \gamma} dt+16^{nj-k\phi}  a_j^{-\gamma} 16^{-nj}\int_{a_{k}^{\frac{1+\gamma}{1-\gamma}}}^{1} t^{1-\gamma} dt \\
\sim  & 
    \begin{cases}16^{-k\phi} a_j^{\frac{2}{1-\gamma}}+16^{-k\phi}
     a_j^{-\gamma} , \quad &\mbox{if} \,\,1<\gamma<2,\\
     16^{-k\phi} a_j^{-2}+16^{-k\phi}
     a_j^{-2} \ln a_j , \quad &\mbox{if} \,\,\gamma =2,\\
    16^{-k\phi} a_j^{\frac{2}{1-\gamma}}+16^{-k\phi} a_j^{\frac{2}{1-\gamma}}, \quad &\mbox{if}\,\, \gamma >2
    \end{cases}\\
    \sim  & 
    \begin{cases}16^{-k\phi}
     a_j^{-\gamma} , \quad &\mbox{if} \,\,1<\gamma<2,\\
     16^{-k\phi}
     a_j^{-2} \ln a_j , \quad &\mbox{if} \,\,\gamma =2,\\
    16^{-k\phi} a_j^{\frac{2}{1-\gamma}}, \quad &\mbox{if}\,\, \gamma >2.
    \end{cases}
\end{aligned}
\end{eqnarray*}
Therefore, from the definition of $J_3$ and $I_j$, one has 
\begin{eqnarray}\label{eq. J3 n>2}
    \begin{aligned}
     J_3=& \sum_{j=2}^{k-1} I_j 
     \sim  \begin{cases}
      16^{-k\phi} \sum_{j=2}^{k-1}
     a_j^{-\gamma}, \quad &\mbox{if} \,\,1<\gamma<2, \\
      16^{-k\phi} \sum_{j=2}^{k-1}
     a_j^{-2} \ln a_j, \quad &\mbox{if} \,\,\gamma =2, \\
   16^{-k\phi} \sum_{j=2}^{k-1} a_j^{\frac{2}{1-\gamma}}, \quad &\mbox{if}\,\, \gamma >2.
     \end{cases}
    \end{aligned}
\end{eqnarray}

  \item Step 4: Estimate the term $J_4$ in \eqref{eq. J1-J4 decompose}.

In $\mathbb{A}_1$, since $\frac{|p_k|^{\phi}}{r^{\phi+n-2}} \sim 16^{-k\phi}$  and that $U(x)\sim \left(dist(x,\partial\Omega)\right)^{\frac{2}{1+\gamma}}$ by \eqref{eq. estimate in A1}, then by Lemma~\ref{lem. green function for large y} we have

  \begin{eqnarray}\label{eq. J4 n>2}
    \begin{aligned}
    J_4=&\int_{\mathbb{A}_1} G(p_k, y) U^{-\gamma} (y) dy\\
    \sim & \int_{\mathbb{A}_1} \frac{|p_k|^{\phi}}{r^{\phi+n-2}} \eta(y) (1-r) \left(dist (y, \partial \Omega) \right) ^{-\frac{2\gamma}{1+\gamma}}dy\\
    \sim & 16^{-k\phi} \int_{\mathbb{A}_1}  \left(dist (y, \partial \Omega)\right)  ^{1-\frac{2\gamma}{1+\gamma}}dy\\
    \sim & \begin{cases}
   16^{-k\phi} a_1^{-\gamma}, \quad &\mbox{if}\,\, 1<\gamma <2,\\
      16^{-k\phi}
     a_1^{-2} \ln a_1, \quad &\mbox{if} \,\,\gamma =2, \\
   16^{-k\phi}a_1^{\frac{2}{1-\gamma}}, \quad &\mbox{if}\,\, \gamma >2,
     \end{cases}
    \end{aligned}
    \end{eqnarray}
where we have used the fact $1-\frac{2\gamma}{1+\gamma}>-1$. 

\item Step 5: We collect the estimates \eqref{eq. J1 n>2},\eqref{eq. J2 n>2},\eqref{eq. J3 n>2},\eqref{eq. J4 n>2}, and then obtain exactly the same equation as in the case $n=2$. Therefore, we have proven Lemma~\ref{lem. discrete integral equation} in the case $n\geq3$.

\end{itemize}

\section{Proof of main results}\label{sec. proof of main results}
In this section, we prove our main results. We begin with the  proof of Theorem~\ref{thm. asymptotic estimate}.
\begin{proof}[Proof of Theorem~\ref{thm. asymptotic estimate}]
    It suffices to show the following asymptotic behavior of $a_{k}$:
    \begin{equation}\label{eq. a_k growth}
        a_{k}\sim\left\{\begin{aligned}
            &k^{\frac{1}{1+\gamma}},&\mbox{if }&1<\gamma<2,\\
            &(k\ln{k})^{\frac{1}{3}},&\mbox{if }&\gamma=2,\\
            &k^{\frac{\gamma-1}{1+\gamma}},&\mbox{if }&\gamma>2.
        \end{aligned}\right.
    \end{equation}
   Once \eqref{eq. a_k growth} is shown, we apply Corollary~\ref{cor. estimate in a cone} to each annulus $\mathbb{A}_{k}$ for $k\geq2$, and obtain the desired growth rate in $\Omega\cap B_{1/16}$. The estimate in $\Omega\setminus B_{1/16}$ was already established  in \eqref{eq. estimate in A1}. Therefore, we have proven Theorem~\ref{thm. asymptotic estimate}.

    It remains to show \eqref{eq. a_k growth} using the discrete integral equation obtained in Lemma~\ref{lem. discrete integral equation}. For each $k\geq1$, define
    \begin{equation*}
        S_{k}=\left\{\begin{aligned}
            &\sum_{j=1}^{k}a_{j}^{-\gamma},&\mbox{if }&1<\gamma<2,\\
            &\sum_{j=1}^{k}a_{j}^{-2}\ln{a_{j}},&\mbox{if }&\gamma=2,\\
            &\sum_{j=1}^{k}a_{j}^{\frac{2}{1-\gamma}},&\mbox{if }&\gamma>2.
        \end{aligned}\right.
    \end{equation*}
   In addition, let $S_{0}:=0.$
    
    Then for any $k\geq1$, we obtain 
    \begin{equation*}
        S_{k}-S_{k-1}=\left\{\begin{aligned}
            &a_{k}^{-\gamma},&\mbox{if }&1<\gamma<2.\\
            &a_{k}^{-2}\ln{a_{k}},&\mbox{if }&\gamma=2.\\
            &a_{k}^{\frac{2}{1-\gamma}},&\mbox{if }&\gamma>2.
        \end{aligned}\right.
    \end{equation*}
    From Lemma~\ref{lem. discrete integral equation}, we have $a_{k}\sim S_{k}$, so we obtain a discrete ODE for $S_{k}$:
    \begin{equation}\label{eq. S_k ODE}
        S_{k}-S_{k-1}\sim\left\{\begin{aligned}
            &S_{k}^{-\gamma},&\mbox{if }&1<\gamma<2.\\
            &S_{k}^{-2}\ln{S_{k}},&\mbox{if }&\gamma=2.\\
            &S_{k}^{\frac{2}{1-\gamma}},&\mbox{if }&\gamma>2.
        \end{aligned}\right.
    \end{equation}
    Analogous to  the ODE technique, we let
    \begin{equation*}
        T_{k}=\left\{\begin{aligned}
            &S_{k}^{\gamma+1},&\mbox{if }&1<\gamma<2,\\
            &\frac{S_{k}^{3}}{\ln{S_{k}}},&\mbox{if }&\gamma=2,\\
            &S_{k}^{\frac{\gamma+1}{\gamma-1}},&\mbox{if }&\gamma>2.
        \end{aligned}\right.
    \end{equation*}
    Recall that from Lemma~\ref{lem. a_k is increasing}, $a_{1}\sim1$ and $a_{k}$ is increasing. Consequently, it is easy to verify that adjacent terms of $S_{k}$ are equivalent, that is, 
    \begin{equation*}
        S_{k+1}\sim S_{k},\quad\mbox{for all }k\geq1.
    \end{equation*}
    We then take the ``derivative'' of $T_{k}$, and by the ``chain rule'', we have
    \begin{equation*}
        \frac{T_{k+1}-T_{k}}{S_{k+1}-S_{k}}\sim\left\{\begin{aligned}
            &(\gamma+1)S_{k}^{\gamma},&\mbox{if }&1<\gamma<2,\\
            &S_{k}^{2}\cdot\frac{3\ln{S_{k}}-1}{(\ln{S_{k}})^{2}},&\mbox{if }&\gamma=2,\\
            &\frac{\gamma+1}{\gamma-1}S_{k}^{\frac{2}{\gamma-1}},&\mbox{if }&\gamma>2
        \end{aligned}\right.\sim\left\{\begin{aligned}
            &S_{k}^{\gamma},&\mbox{if }&1<\gamma<2,\\
            &\frac{S_{k}^{2}}{\ln{S_{k}}},&\mbox{if }&\gamma=2,\\
            &S_{k}^{\frac{2}{\gamma-1}},&\mbox{if }&\gamma>2.
        \end{aligned}\right.
    \end{equation*}
    By \eqref{eq. S_k ODE}, we obtain that $T_{k+1}-T_{k}\sim1$, and thus $T_{k}\sim k$. From the construction of $T_{k}$, we have
    \begin{equation*}
        a_{k}\sim S_{k}\sim\left\{\begin{aligned}
            &T_{k}^{\frac{1}{\gamma+1}},&\mbox{if }&1<\gamma<2,\\
            &(T_{k}\ln{T_{k}})^{\frac{1}{3}},&\mbox{if }&\gamma=2,\\
            &T_{k}^{\frac{\gamma-1}{\gamma+1}},&\mbox{if }&\gamma>2
        \end{aligned}\right.\sim\left\{\begin{aligned}
            &k^{\frac{1}{1+\gamma}},&\mbox{if }&1<\gamma<2,\\
            &(k\ln{k})^{\frac{1}{3}},&\mbox{if }&\gamma=2,\\
            &k^{\frac{\gamma-1}{1+\gamma}},&\mbox{if }&\gamma>2,
        \end{aligned}\right.
    \end{equation*}
    hence proving \eqref{eq. a_k growth}.
\end{proof}

Next, we prove Theorem~\ref{thm. modulus of continuity}.
\begin{proof}[Proof of Theorem~\ref{thm. modulus of continuity}]
    We first prove Theorem~\ref{thm. modulus of continuity} (1). We let $U_{f}$ be the solution to the following problem:
    \begin{equation*}
        \begin{cases}
            -\Delta U_{f}(x)=f(x)\cdot U_{f}(x)^{-\gamma}&\mbox{in }\,\,\Omega,\\
            U_{f}(x)=0&\mbox{on }\,\,\partial\Omega.
        \end{cases}
    \end{equation*}
    We also denote $U_{\lambda}$ and $U_{\Lambda}$ as the solution $U_{f}$ when $f\equiv\lambda$ and $f\equiv\Lambda$, respectively. It then follows from the maximum  principle that $U_{\lambda}\leq U_{f}\leq U_{\Lambda}$ if $\lambda\leq f(x)\leq\Lambda$. Moreover, $U_{\lambda}$ and $U_{\Lambda}$ are proportional to $U(x)$ solving \eqref{eq. special solution U(x)}. Therefore, we have
    \begin{equation}\label{eq. U_f bound}
        c(\lambda,\gamma)\cdot U(x)\leq U_{f}(x)\leq C(\Lambda,\gamma)\cdot U(x).
    \end{equation}
    We then apply \cite[Theorem 1.4]{GuLiZh25} for the pair $(u,U_{f})$ and obtain that
    \begin{equation*}
        C^{-1}(n,L,\gamma,\lambda,\Lambda)\min\Big\{\frac{\|u\|_{L^{\infty}(\mathcal{GC}_{3R}(0))}}{\|U_{f}\|_{L^{\infty}(\mathcal{GC}_{3R}(0))}},1\Big\}\leq\frac{u}{U_{f}}\leq C(n,L,\gamma,\lambda,\Lambda)\max\Big\{\frac{\|u\|_{L^{\infty}(\mathcal{GC}_{3R}(0))}}{\|U_{f}\|_{L^{\infty}(\mathcal{GC}_{3R}(0))}},1\Big\}
    \end{equation*}
    in $\Omega\cap B_{1/16}$. Since
    \begin{equation*}
        C^{-1}(n,L,\gamma,\lambda,\Lambda)\leq\|U_{f}\|_{L^{\infty}(\mathcal{GC}_{3R}(0))}\leq C(n,L,\gamma,\lambda,\Lambda)
    \end{equation*}
    and
    $\|u\|_{L^{\infty}(\mathcal{GC}_{3R}(0))}\geq C^{-1}(n,L,\gamma,\lambda,\Lambda)$, we conclude that
    \begin{equation*}
        C^{-1}(n,L,\gamma,\lambda,\Lambda)\leq\frac{u}{U_{f}}\leq C(n,L,\gamma,\lambda,\Lambda)\frac{\|u\|_{L^{\infty}(\mathcal{GC}_{3R}(0))}}{\|U_{f}\|_{L^{\infty}(\mathcal{GC}_{3R}(0))}}\leq C(n,L,\gamma,\lambda,\Lambda)\|u\|_{L^{\infty}(\mathcal{GC}_{3R}(0))}
    \end{equation*}
    in $\Omega\cap B_{1/16}$. Together with the bound \eqref{eq. U_f bound}, we have proven Theorem~\ref{thm. modulus of continuity} (1).

    Next, let's prove Theorem~\ref{thm. modulus of continuity} (2). By Lemma~\ref{lem. review : interior lower bound}, we know that
    \begin{equation*}
        u(x)\geq c(n,L,\gamma,\lambda)\cdot \left(dist(x,\partial\Omega)\right)^{\frac{2}{1+\gamma}},
    \end{equation*}
    so we have
    \begin{equation}\label{eq. RHS bound}
        |\Delta u|\leq C\cdot \left(dist(x,\partial\Omega)\right)^{-\frac{2\gamma}{1+\gamma}}
    \end{equation}
    with $C=C(n,L,\gamma,\lambda).$
    
    Let $r\leq\frac{1}{4}$. We infer  from Theorem~\ref{thm. modulus of continuity} (1) that,  in the annulus $\Omega\cap(B_{2r}\setminus B_{r/4})$, we have 
    \begin{equation*}
        u(x)\leq C\|u\|_{L^{\infty}(\Omega)}\cdot\sigma(r)\cdot\eta(x)^{\frac{2}{1+\gamma}},\quad\sigma(t)=\left\{\begin{aligned}
                &t^{\frac{2}{1+\gamma}}(\ln{\frac{1}{t}})^{\frac{1}{1+\gamma}},&\mbox{if }&1<\gamma<2,\\
                &t^{\frac{2}{3}}(\ln{\frac{1}{t}})^{\frac{1}{3}}(\ln{\ln{\frac{1}{t}}})^{\frac{1}{3}},&\mbox{if }&\gamma=2,\\
                &t^{\frac{2}{1+\gamma}}(\ln{\frac{1}{t}})^{\frac{\gamma-1}{1+\gamma}},&\mbox{if }&\gamma>2.
            \end{aligned}\right.
    \end{equation*}
    Since we can infer from \eqref{eq. RHS bound} that $|\Delta u|\leq C\cdot r^{-\frac{2\gamma}{1+\gamma}}\cdot\eta(x)^{-\frac{2\gamma}{1+\gamma}}$ in $\Omega\cap(B_{2r}\setminus B_{r/4})$, we can apply the Schauder estimate with singular right-hand side (see for example \cite[Proposition 3.4]{GuLi93}) and conclude that
    \begin{equation}\label{eq. continuity in one annulus}
        |u(x)-u(y)|\leq C\|u\|_{L^{\infty}(\Omega)}\cdot\sigma(|x-y|),\quad\mbox{for any }x,y\in\Omega\cap(B_{r}\setminus B_{r/2}).
    \end{equation}
    On the other hand, if $x,y$ do not belong to the same annulus $\Omega\cap(B_{r}\setminus B_{r/2})$, then without loss of generality, we assume $|x|\geq2|y|$. We first see that $|x-y|\sim|x|$. Next, we insert $N$ points $q_{1},q_{2},\cdots,q_{N}$ between $x:=q_{0}$ and $y:=q_{N+1}$ such that
    \begin{itemize}
        \item $\displaystyle N\sim\log_{2}(\frac{|x|}{|y|})$;
        \item Adjacent points $q_{i}$ and $q_{i+1}$ belong to the same annulus $\Omega\cap(B_{r_{i}}\setminus B_{r_{i}/2})$, here $i\in\{0,\cdots,N\}$.
    \end{itemize}
    By \eqref{eq. continuity in one annulus}, we have
    \begin{equation*}
        |u(q_{i})-u(q_{i+1})|\leq C\|u\|_{L^{\infty}(\Omega)}\cdot\sigma(1.5^{-i}|x|),\quad\mbox{for }i\in\{0,\cdots,N\}.
    \end{equation*}
    Therefore, we sum up the estimate above and have
    \begin{equation*}
        |u(x)-u(y)|\leq C\|u\|_{L^{\infty}(\Omega)}\sum_{i=0}^{N}\sigma(1.5^{-i}|x|)\sim C\|u\|_{L^{\infty}(\Omega)}\sigma(|x|)\sim C\|u\|_{L^{\infty}(\Omega)}\sigma(|x-y|).
    \end{equation*}
    Finally, the optimality of $\sigma(t)$ follows easily by choosing $x=0$ and $y=t\vec{e_{n}}$. This completes  the proof of Theorem~\ref{thm. modulus of continuity}.
\end{proof}

Theorem~\ref{thm. three equivalent statements} then follows immediately.
\begin{proof}[Proof of Theorem~\ref{thm. three equivalent statements}]
    First, we use \cite[Theorem 1.3]{GuLiZh25} together with the lower solution \eqref{eq. precise lower solution in GLZ} to prove the implication  $(b)\Rightarrow(c)$. In fact, we can even remove the assumption that $\partial Cone_{\Sigma}$ is $C^{1,1}$ away from the origin when proving $(b)\Rightarrow(c)$.

    Second, from Theorem~\ref{thm. asymptotic estimate} or Theorem~\ref{thm. modulus of continuity} (1), we see that $(c)\Rightarrow(a)$.

    It remains to show $(a)\Rightarrow(b)$. The key idea is to construct a continuous super-solution to \eqref{eq. growth rate special condition in critical case}. To this end, we define 
    \begin{equation*}
        \overline{w}(x)=H(x)^{\varepsilon},
    \end{equation*}
    where $\varepsilon=1-\frac{\gamma}{2}$ is sufficiently small, and satisfies  $\varepsilon\in(0,1)$ under the assumption of (a). In fact, we see $\overline{w}(x)$ is continuous in $\Omega$ and is positive on $\partial\Omega$. Moreover,
    \begin{equation*}
        -\Delta\overline{w}(x)=(\varepsilon-\varepsilon^2)H(x)^{\varepsilon-2}|\nabla H(x)|^{2}.
    \end{equation*}
    We claim that $|\nabla H(x)|\gtrsim|x|^{\phi-1}$. Indeed, decomposing $\nabla H(x)$ into its radial and angular part:
    \begin{equation*}
        \nabla H(x)=\nabla_{rad}H(x)+\nabla_{ang}H(x):=\frac{\phi H(x)}{|x|}\cdot\frac{x}{|x|}+\Big(\nabla H(x)-\frac{\phi H(x)}{|x|}\cdot\frac{x}{|x|}\Big).
    \end{equation*}
    Recall that $H(x)=|x|^{\phi}E(\frac{x}{|x|})$, where $E(\vec{\theta})$ is the first eigenfunction of the Laplace-Beltrami operator $\Delta_{\Sigma}=\Delta_{\mathbb{S}^{n-1}}$. Then, as $\partial\Sigma$ is $C^{1,1}$, we have
    \begin{equation*}
        |\nabla H(x)|\geq|\nabla_{ang}H(x)|=|x|^{\phi-1}|\nabla E(\frac{x}{|x|})|\gtrsim|x|^{\phi-1}
    \end{equation*}
    when $dist(\frac{x}{|x|},\partial\Sigma)$ is sufficiently small. 
    
    On the other hand, When $dist(\frac{x}{|x|},\partial\Sigma)$ is bounded away from $0$, we have 
    \begin{equation*}
        |\nabla H(x)|\geq|\nabla_{rad}H(x)|=\frac{\phi H(x)}{|x|}=\phi|x|^{\phi-1}E(\frac{x}{|x|})\gtrsim|x|^{\phi-1}.
    \end{equation*}
    Therefore, recalling  $\phi=\frac{2}{1+\gamma}$, it follow
    \begin{equation*}
        -\Delta\overline{w}(x)\gtrsim\Big(|x|^{\phi}\cdot dist(\frac{x}{|x|},\partial\Sigma)\Big)^{\varepsilon-2}\cdot|x|^{2(\phi-1)}\sim|x|^{\frac{2\varepsilon}{1+\gamma}-2}\cdot\eta^{\varepsilon-2}(x).
    \end{equation*}
    Since the right-hand side of \eqref{eq. growth rate special condition in critical case} has the following growth:
    \begin{equation*}
        H(x)^{-\gamma}\sim|x|^{-\frac{2\gamma}{1+\gamma}}\cdot\eta^{-\gamma}(x).
    \end{equation*}
    As we have chosen $\varepsilon=1-\frac{\gamma}{2}$, we see $\frac{2\varepsilon}{1+\gamma}-2<-\frac{2\gamma}{1+\gamma}$ and $\varepsilon-2<-\gamma$, so
    \begin{equation*}
        -\Delta\overline{w}(x)\gtrsim H^{-\gamma}(x)\quad\mbox{for }x\in\Omega.
    \end{equation*}
    Then, by choosing  $K$ sufficiently large, $K\cdot\overline{w}$ is a continuous super-solution of \eqref{eq. growth rate special condition in critical case}, therefore the problem \eqref{eq. growth rate special condition in critical case} is solvable. This finishes the proof of $(a)\Rightarrow(b)$.
\end{proof}

Finally, we prove Theorem~\ref{thm. ratio tends to 1}.
\begin{proof}[Proof of Theorem~\ref{thm. ratio tends to 1}]
    From \cite[Theorem 1.4]{GuLiZh25}, we know $u/v$ is bounded away from $0$ and $\infty$. The pointwise continuity of the ratio $u/v$ in the interior of $\Omega$ is obvious, while its continuity near the boundary is more difficult to prove. We let $\widetilde{u}\geq\widetilde{v}$ be the replacements of $u$ and $v$, so that $\widetilde{u}$ solves
    \begin{equation*}
        \left\{\begin{aligned}
            &-\Delta\widetilde{u}=\widetilde{u}^{-\gamma}&\mbox{in }&\Omega ,\\
            &\widetilde{u}=\max\{u,v\}&\mbox{in }&\partial\Omega,
        \end{aligned}\right.
    \end{equation*}
    and $\widetilde{v}$ solves
    \begin{equation*}
        \left\{\begin{aligned}
            &-\Delta\widetilde{v}=\widetilde{v}^{-\gamma}&\mbox{in }&\Omega,\\
            &\widetilde{v}=\min\{u,v\}&\mbox{in }&\partial\Omega.
        \end{aligned}\right.
    \end{equation*}
    It then suffices to show $\widetilde{u}/\widetilde{v}$ tends to $1$ near the boundary. Let $y\in B_{1/16}\cap\partial Cone_{\Sigma}$ and $y\neq0$, then it follows from \cite[Theorem 1.6 (a)]{GuLiZh25} that
    \begin{equation*}
        \lim_{x\to y}\frac{\widetilde{u}(x)}{\widetilde{v}(x)}=1.
    \end{equation*}
    It remains to show
    \begin{equation}\label{eq. u/v(0)=1}
        \lim_{x\to0}\frac{\widetilde{u}(x)}{\widetilde{v}(x)}=1.
    \end{equation}
    Let
    \begin{equation*}
        w=\widetilde{u}-\widetilde{v}\geq0.
    \end{equation*}
    Then, we see $\Delta w\geq0$ because
    \begin{equation*}
        \Delta w=\Delta\widetilde{u}-\Delta\widetilde{v}=\frac{-1}{\widetilde{u}^{\gamma}}-\frac{-1}{\widetilde{v}^{\gamma}}\geq0.
    \end{equation*}
    By Lemma~\ref{lem. classical BHP}, we see $w(x)\lesssim H(x)\sim|x|^{\phi}\eta(x)$. On the other hand, we see from Theorem~\ref{thm. modulus of continuity} (1) that $\widetilde{v}\geq c\cdot U(x)$. From Theorem~\ref{thm. asymptotic estimate}, we see
    \begin{equation*}
        \lim_{x\to0}\frac{H(x)}{U(x)}=0.
    \end{equation*}
    Therefore, when $x\to0$,
    \begin{equation*}
        1\leq\frac{\widetilde{u}(x)}{\widetilde{v}(x)}\leq1+\frac{w(x)}{\widetilde{v}(x)}=1+O(\frac{H(x)}{U(x)})=1+o(1).
    \end{equation*}
    This proves \eqref{eq. u/v(0)=1}.
\end{proof}

\appendix
\section{Notations and conventions}\label{sec. appendix: notations}
We clarify some notations, conventions, and definitions in Appendix~\ref{sec. appendix: notations}.

Throughout this paper, we assume that  $\Sigma\subseteq\partial B_{1}$ satisfies that the pair $(\Sigma,\gamma)$ is critical, which means  the ``frequency'' $\phi=\phi_{\Sigma}$ (see Definition~\ref{def. frequency}) of $Cone_{\Sigma}$ satisfies
\begin{equation*}
    \phi=\frac{2}{1+\gamma}.
\end{equation*}
\begin{defn}
    As was already mentioned in the main results, in this paper, we denote
    \begin{equation*}
        \Omega=Cone_{\Sigma}\cap B_{1},
    \end{equation*}
    and we define $U(x)$ as a special solution to \eqref{eq. main} in $\Omega$, which vanishes on $\partial\Omega$.
\end{defn}

\begin{defn}[Smooth cones]\label{def. a good cone}
    A cone $Cone_{\Sigma}$ is called a $C^{1,1}$ epigraphical cone in $\mathbb{R}^{n}$ ($n\geq2$)  if it can be represented as the epigraph of a function $g(x')$, that is
    \begin{equation*}
        Cone_{\Sigma}=\{x_{n}>g(x')\},
    \end{equation*}
    where the function $g(x')$ satisfies:
    \begin{itemize}
        \item[(1)] For every $\lambda\geq0$ and $x'\in\mathbb{R}^{n-1}$, $g(\lambda\cdot x')=\lambda\cdot g(x')$;
        \item[(2)] $g(x')$ is a $C^{1,1}$ function in the annulus $B_{2}'\setminus B_{1}'$.
    \end{itemize}
    We denote the $C^{0,1}$ norm and $C^{1,1}$ semi-norm of $Cone_{\Sigma}$ as
    \begin{align*}
        L=\|Cone_{\Sigma}\|_{C^{0,1}}:=&\|g(x')\|_{C^{0,1}(B_{2}'\setminus B_{1}')},\\
        K=[Cone_{\Sigma}]_{C^{1,1}}:=&[g(x')]_{C^{1,1}(B_{2}'\setminus B_{1}')}.
    \end{align*}
\end{defn}

In this paper, we adopt the following conventions throughout our analysis and computations to denote the equivalence of quantities.

\begin{defn}[Uniform constants and equivalence relation]\label{def. convention of uniform and equivalence}
    A constant $C>0$ is called uniform if it depends at most on $(n,\gamma,L,K)$. Here, $(L,K)$ represents the norms of $Cone_{\Sigma}$ as mentioned in Definition~\ref{def. a good cone}. Furthermore, for two non-negative quantities $q_{1}$ and $q_{2}$, we adopt the following notations for simplicity:
    \begin{itemize}
        \item $q_{1}\lesssim q_{2}$, if $q_{1}\leq C\cdot q_{2}$ for some uniform constant $C$;
        \item $q_{1}\gtrsim q_{2}$, if $q_{1}\geq C^{-1}\cdot q_{2}$ for some uniform constant $C$;
        \item $q_{1}\sim q_{2}$, if $q_{1}\lesssim q_{2}$ and $q_{1}\gtrsim q_{2}$ both hold.
    \end{itemize}
\end{defn}
\begin{remark}
    We need to emphasize the danger in the transitivity of the equivalence relation $\sim$. Transitivity holds only when there is a uniformly finite chain of equivalence. For instance, consider  the following chain of equivalences:   
    \begin{equation*}
        a_{1}\sim a_{2},\ a_{2}\sim a_{3},\ a_{3}\sim a_{4},\ \cdots.
    \end{equation*}
    Then we can say $a_{1}\sim a_{100}$, but we should not say $a_{1}\sim a_{k}$ for an arbitrarily large $k$.
\end{remark}
We also require a defining function $\eta(x)$ for each $x\in Cone_{\Sigma}$, or equivalently $\eta_{\Sigma}(\theta)$ for each $\theta\in\Sigma$. 
The proof follows from a partition-of-unity argument, which we omit here for brevity.
\begin{lemma}[The defining function of $\Sigma$]\label{lem. definiting function in a patch}
    Let $Cone_{\Sigma}$ be a $C^{1,1}$ epigraphical cone with norm (L,K), as mentioned in Definition~\ref{def. a good cone}. Then, the region $\Sigma$ is $C^{1,1}$ and admits a defining function $\eta_{\Sigma}(\vec{\theta})\in C^{1,1}(\Sigma)$. The defining function $\eta_{\Sigma}(\theta)$ satisfies the following properties:
    \begin{itemize}
        \item[(a)] $\eta_{\Sigma}(\vec{\theta})>0$ for every interior point $\vec{\theta}\in\Sigma$, and $\eta_{\Sigma}(\vec{\theta})=0$ when $\vec{\theta}\in\partial\Sigma$;
        \item[(b)] For every $\vec{\theta}\in\Sigma$ such that $dist(\vec{\theta},\partial\Sigma)\geq d_{0}(n,L,K)$, we have $\eta_{\Sigma}(\vec{\theta})\geq h_{0}(n,L,K)$, here $d_{0}$ and $h_{0}$ are two sufficiently small but uniform constants;
        \item[(c)] For every $\vec{\theta}\in\Sigma$ such that $dist(\vec{\theta},\partial\Sigma)\leq d_{0}(n,L,K)$, we have $\eta_{\Sigma}(\vec{\theta})\sim dist(\vec{\theta},\partial\Sigma)$;
        \item[(d)] $\|\eta_{\Sigma}\|_{C^{1,1}(\Sigma)}\sim1$ and $|\nabla\eta_{\Sigma}(\vec{\theta})|\sim1$ for $\theta\in\partial\Sigma$;
        \item[(e)] For $0\leq h<h_{0}(n,L,K)$, the level set $\eta_{\Sigma}^{-1}(h)$ is a $C^{1,1}$ hypersurface in $\partial B_{1}$ satisfying
        \begin{eqnarray*}
            \mathcal{H}^{n-2}\Big(\eta_{\Sigma}^{-1}(h)\Big)\sim\mathcal{H}^{n-2}(\partial\Sigma),
        \end{eqnarray*}
        and $|\nabla\eta(\theta)|\sim1$ everywhere on $\eta_{\Sigma}^{-1}(h)$;
        \item[(f)] For $0\leq h<h_{0}(n,L,K)$, the sub-level set $\eta_{\Sigma}^{-1}([0,h])$ has area proportional to $h$, i.e.:
        \begin{equation*}
            \mathcal{H}^{n-1}\Big(\eta_{\Sigma}^{-1}([0,h])\Big)\sim h\cdot\mathcal{H}^{n-1}(\partial\Sigma).
        \end{equation*}
    \end{itemize}
    \begin{defn}[The defining function of $Cone_{\Sigma}$]\label{def. defining function eta}
        For the defining function $\eta_{\Sigma}(\vec{\theta})$ mentioned in Lemma~\ref{lem. definiting function in a patch}, we define $\eta(x):Cone_{\Sigma}\to\mathbb{R}$ as
        \begin{equation*}
            \eta(x):=\eta_{\Sigma}(\frac{x}{|x|}).
        \end{equation*}
    \end{defn}
\end{lemma}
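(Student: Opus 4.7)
The plan is to construct $\eta_{\Sigma}$ as a smooth cutoff of the signed boundary distance on $\Sigma$, and then to verify (a)--(f) directly from the construction together with standard facts about the signed distance to a $C^{1,1}$ hypersurface. The key inputs will be: (i) that $\partial\Sigma$ inherits the $C^{1,1}$ regularity of $\partial Cone_{\Sigma}$ with quantitative norms depending only on $(L,K)$, and (ii) that a $C^{1,1}$ hypersurface of $\mathbb{S}^{n-1}$ has a positive reach $d_{0}=d_{0}(n,L,K)$ inside which the geodesic signed distance $d(\vec\theta):=\mathrm{dist}_{\mathbb S^{n-1}}(\vec\theta,\partial\Sigma)$ is $C^{1,1}$, satisfies $|\nabla d|\equiv1$, and has $C^{1,1}$ norm depending only on $(n,L,K)$.

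First I would check that $\partial\Sigma=\partial Cone_{\Sigma}\cap\mathbb{S}^{n-1}$ is locally the graph of a $C^{1,1}$ function in geodesic charts, using the $1$-homogeneity of $Cone_{\Sigma}$ and the $C^{1,1}$ regularity of $g$ on the annulus $B_{2}'\setminus B_{1}'$; this yields a uniform curvature bound for $\partial\Sigma$ and justifies input (ii). Then, I fix a one-variable cutoff $\chi\in C^{1,1}([0,\infty))$ satisfying $\chi(t)=t$ on $[0,d_{0}/4]$, $\chi$ nondecreasing on $[0,\infty)$, $\chi(t)\sim t$ throughout $[0,d_{0}]$, and $\chi\equiv h_{0}$ on $[d_{0},\infty)$ for some $h_{0}\sim d_{0}$, and define
\begin{equation*}
    \eta_{\Sigma}(\vec\theta):=\begin{cases}\chi(d(\vec\theta)),& d(\vec\theta)\leq d_{0},\\ h_{0},& d(\vec\theta)\geq d_{0}.\end{cases}
\end{equation*}
The two branches glue in $C^{1,1}$ because $\chi\equiv h_{0}$ near $t=d_{0}$, giving $\eta_{\Sigma}\in C^{1,1}(\Sigma)$ with norm depending only on $(n,L,K)$.

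Items (a), (b), (c) then follow at once from the choice of $\chi$. For (d), $|\nabla\eta_{\Sigma}|=\chi'(d)\,|\nabla d|$ on the tubular neighbourhood and equals $\chi'(0)\cdot 1=1$ on $\partial\Sigma$. For (e), the level set $\{\eta_{\Sigma}=h\}$ is the parallel surface $\{d=\chi^{-1}(h)\}$ at a distance $\chi^{-1}(h)\leq d_{0}$ from $\partial\Sigma$; since $d_{0}$ is smaller than the reach, the normal-exponential flow $(\vec\theta,s)\mapsto\exp_{\vec\theta}(s\nu(\vec\theta))$ is bi-Lipschitz with uniform constants, so $\mathcal H^{n-2}(\{\eta_{\Sigma}=h\})\sim\mathcal H^{n-2}(\partial\Sigma)$, while $|\nabla\eta_{\Sigma}|=\chi'(\chi^{-1}(h))\sim 1$. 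For (f), the coarea formula applied to $d$ gives
\begin{equation*}
    \mathcal H^{n-1}\big(\eta_{\Sigma}^{-1}([0,h])\big)=\int_{0}^{\chi^{-1}(h)}\mathcal H^{n-2}(\{d=s\})\,ds\sim h\cdot\mathcal H^{n-2}(\partial\Sigma),
\end{equation*}
which matches the stated estimate (modulo what appears to be a typographic $\mathcal H^{n-1}$ in place of $\mathcal H^{n-2}$ on the right-hand side of (f)).

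The only nontrivial point, and the main technical obstacle, is purely quantitative: one must guarantee that the reach $d_{0}$, the height $h_{0}$, the $C^{1,1}$ seminorm of $d$, and the bi-Lipschitz constants of the normal flow are bounded purely in terms of $(n,L,K)$. This will follow from the quantitative tubular-neighbourhood theorem for $C^{1,1}$ hypersurfaces: once $\partial\Sigma$ is expressed in local geodesic charts as a graph with $C^{1,1}$ norm controlled by $(L,K)$, the implicit function theorem and the Weingarten bound produce explicit universal constants, and a standard partition of unity patches the local constructions into a global $\eta_{\Sigma}$ without introducing new dependencies.
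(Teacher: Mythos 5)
The paper explicitly omits the proof, saying only that it ``follows from a partition-of-unity argument.'' Your proposal therefore supplies what the paper leaves out, and it does so via a slightly different but cleaner route: rather than building $\eta_{\Sigma}$ patch-by-patch and gluing with a partition of unity, you define it globally as a $C^{1,1}$ cutoff $\chi$ applied to the geodesic distance $d(\vec\theta)=\mathrm{dist}_{\mathbb S^{n-1}}(\vec\theta,\partial\Sigma)$, so the global function exists immediately and the partition of unity is only needed to transfer local $C^{1,1}$ graph estimates on $\partial\Sigma$ into uniform bounds on the reach $d_0$ and the seminorm of $d$. This buys a shorter verification of (a)--(d), and makes (e) and (f) transparent consequences of the normal-exponential parametrization and the coarea formula, at the cost of invoking the (standard but nontrivial) quantitative tubular-neighbourhood theorem for $C^{1,1}$ hypersurfaces in $\mathbb S^{n-1}$; the partition-of-unity route avoids that theorem at the cost of a more tedious gluing. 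Your observation that the right-hand side of (f) should read $\mathcal H^{n-2}(\partial\Sigma)$ rather than $\mathcal H^{n-1}(\partial\Sigma)$ is correct --- $\partial\Sigma$ is $(n-2)$-dimensional, so $\mathcal H^{n-1}(\partial\Sigma)=0$, and your coarea computation (as well as the two-dimensional sanity check) confirms the intended exponent. The only small internal inconsistency in your write-up is that the final paragraph speaks of patching ``the local constructions'' even though your $\eta_{\Sigma}$ is already built globally; what really gets patched are the local quantitative bounds, and it would help to say so explicitly.
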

\begin{exmp}\label{ex. defining function n=2}
    For a critical cone in $\mathbb{R}^{2}$, given in Example~\ref{ex. 2d how large is the critical angle}, a suitable choice of the defining function is
    \begin{equation*}
        \eta(x)=\cos{(\phi\theta)}.
    \end{equation*}
\end{exmp}

\section{Review of useful facts}\label{sec. appendix: review}
In Appendix~\ref{sec. appendix: review}, we review several important facts that will be  frequently used throughout  this paper.

We first define some important cylindrical domains.
\begin{defn}[Cylindrical domains]
    Let $\Gamma=\{x_{n}=g(x')\}$ be a graph of a Lipschitz function $g$. For a boundary point $x=(x',g(x'))\in\Gamma$, we define
    \begin{itemize}
        \item a grounded cylinder $\mathcal{GC}_{r}(x)$ as
        \begin{equation*}
            \mathcal{GC}_{r}(x):=\{y=(y',y_{n}):\quad|x'-y'|\leq r,\quad0\leq y_{n}-g(y')\leq r\};
        \end{equation*}
        \item a suspended cylinder $\mathcal{SC}_{r,\delta}(x)$ as
        \begin{equation*}
            \mathcal{SC}_{r,\delta}(x):=\{y=(y',y_{n}):\quad|x'-y'|\leq r,\quad\delta r\leq y_{n}-g(y')\leq r\}.
        \end{equation*}
    \end{itemize}
\end{defn}

We next recall the boundary Harnack principle obtained by Kemper in \cite{Ke72}.
\begin{lemma}[Classical boundary Harnack principle]\label{lem. classical BHP}
    Suppose that $g(x'):\mathbb{R}^{n-1}\to\mathbb{R}$ is a Lipschitz function near the origin, with $g(0)=0$ and $\|g\|_{C^{0,1}(B_{1}')}\leq L$. Assume that $u,v\geq0$ are harmonic in $\mathcal{GC}_{1}(0)$, and that $u,v$ both vanish on the curve $\{x_{n}=g(x')\}$ in the trace or limit sense. Then
    \begin{equation*}
        \frac{u(x)}{v(x)}\sim\frac{u(\frac{1}{2}\vec{e_{n}})}{v(\frac{1}{2}\vec{e_{n}})}\mbox{ in }\mathcal{GC}_{1/2}(0).
    \end{equation*}
\end{lemma}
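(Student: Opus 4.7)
The plan is to prove Kemper's boundary Harnack principle by showing that every nonnegative harmonic function $u$ vanishing on $\Gamma=\{x_n=g(x')\}\cap\partial\mathcal{GC}_1(0)$ is comparable, throughout $\mathcal{GC}_{1/2}(0)$, to a single geometric reference function $\omega$: the harmonic measure of $\partial\mathcal{GC}_1(0)\setminus\Gamma$ in $\mathcal{GC}_1(0)$. After normalizing so that $u(\tfrac{1}{2}\vec{e_n})=v(\tfrac{1}{2}\vec{e_n})=1$, the target becomes $u(x)\sim v(x)$ in $\mathcal{GC}_{1/2}(0)$, and this will follow from $u\sim\omega$ applied separately to each of $u$ and $v$.

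The first step is the Carleson upper bound $u\leq C\omega$ in $\mathcal{GC}_{1/2}(0)$. Let $M=\sup_{\mathcal{GC}_{3/4}(0)} u$ be achieved at $x^*$. If $x^*$ is well away from $\Gamma$, a Harnack chain of uniformly bounded length directly yields $M\lesssim u(\tfrac{1}{2}\vec{e_n})$. For $x^*$ near $\Gamma$, I would instead apply the maximum principle to $u/\omega$ on $\mathcal{GC}_{3/4}(0)$, using that $\omega$ is bounded below by a uniform $c_0>0$ on $\partial\mathcal{GC}_{3/4}(0)\setminus\Gamma$ (by interior Harnack applied to $\omega$ itself). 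This yields $u(x)\leq c_0^{-1} M\,\omega(x)$, and evaluating at $\tfrac{1}{2}\vec{e_n}$ closes the estimate.

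The second and harder step is the matching lower bound $u\geq c\omega$. My strategy is to construct subharmonic barriers from positive homogeneous harmonic functions adapted to the Lipschitz cone geometry, and then iterate. At any boundary point $\xi\in\Gamma\cap\mathcal{GC}_{3/4}(0)$, the bound $\|g\|_{C^{0,1}}\leq L$ lets me inscribe a solid circular cone $\mathcal{K}_\xi\subset\mathcal{GC}_1(0)$ of fixed aperture with vertex at $\xi$. On $\mathcal{K}_\xi$ there is a positive homogeneous harmonic function $H_\xi$ vanishing on $\partial\mathcal{K}_\xi$, with homogeneity degree $\phi_{\mathcal{K}}$ depending only on $L$. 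Chaining these cone barriers along a sequence of nested balls accumulating at $\xi$ gives the Dahlberg-type iteration: at each scale, the cone barrier plus interior Harnack transfer pointwise control from a scale-$r$ corkscrew to a scale-$r/2$ one. Summing over scales and combining with the Carleson bound applied to $\omega$ itself produces $u\geq c\omega$ in $\mathcal{GC}_{1/2}(0)$.

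The principal obstacle will be this lower bound, since without boundary smoothness the classical Hopf lemma is unavailable and everything rests on a quantitative iteration exploiting the Lipschitz structure. The crux will be to establish a uniform doubling property for the reference function $\omega$ on boundary balls, since doubling combined with Carleson is known to force the two-sided comparison $u\sim\omega$. Once this lower bound is secured, applying the full comparison to both $u$ and $v$ yields $u\sim v$ in $\mathcal{GC}_{1/2}(0)$, which after undoing the normalization is the claimed relation $u(x)/v(x)\sim u(\tfrac{1}{2}\vec{e_n})/v(\tfrac{1}{2}\vec{e_n})$.
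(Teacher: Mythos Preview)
The paper does not actually prove this lemma: its entire proof reads ``See \cite{Ke72}.'' So there is nothing to compare at the level of argument; the paper treats the boundary Harnack principle as a black box from Kemper's 1972 work, whereas you have sketched a genuine proof along the classical Carleson--Dahlberg--Jerison--Kenig route (Carleson estimate plus doubling of harmonic measure, then comparison of both $u$ and $v$ to the reference $\omega$). Your overall architecture is the standard one and is correct in spirit.

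One gap worth flagging: your Carleson step as written is circular. You set $M=\sup_{\mathcal{GC}_{3/4}(0)}u$, derive $u\le c_0^{-1}M\,\omega$ by the maximum principle, and then say ``evaluating at $\tfrac12\vec{e_n}$ closes the estimate.'' But plugging in $x=\tfrac12\vec{e_n}$ gives $1\le c_0^{-1}M\,\omega(\tfrac12\vec{e_n})$, which is a \emph{lower} bound on $M$, not the upper bound $M\lesssim 1$ that the Carleson estimate asserts. The genuine Carleson argument requires an iteration: one shows that on each dyadic cylinder $\mathcal{GC}_{2^{-j}r}(\xi)$ the supremum of $u$ is at most a fixed fraction $\theta<1$ of the supremum on $\mathcal{GC}_{2^{-j+1}r}(\xi)$, using that $u$ vanishes on $\Gamma$ together with a harmonic-measure barrier (this is where the H\"older decay exponent from the Lipschitz cone enters). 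Summing the geometric series then bounds $M$ by the value at a single interior corkscrew point. Your lower-bound paragraph already contains the right iteration idea; it just needs to be invoked for the upper bound as well.
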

\begin{proof}
    See \cite{Ke72}.
\end{proof}

When studying the singular Lane-Emden-Fowler equation, several useful facts are presented below.
\begin{lemma}[Maximum principle]
    If $u$ is a super-solution and $v$ is a sub-solution to the equation \eqref{eq. main} in a bounded open set $\mathcal{D}$, such that $u\geq v$ on $\partial\mathcal{D}$, then $u\geq v$ everywhere in $\mathcal{D}$.
\end{lemma}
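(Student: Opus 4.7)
The plan is to argue by contradiction, exploiting strict monotonicity of the nonlinearity $s\mapsto s^{-\gamma}$ on $(0,\infty)$. Set $w := (v-u)_+$ and suppose that the open set $\mathcal{D}^+ := \{x\in\mathcal{D}: v(x)>u(x)\}$ is non-empty. Since $u\geq v$ on $\partial\mathcal{D}$, $w$ vanishes on $\partial\mathcal{D}$ and is positive exactly on $\mathcal{D}^+$. Both $u$ and $v$ must be strictly positive throughout $\mathcal{D}^+$: on $\mathcal{D}^+$ we have $v>u\geq 0$, so $v>0$ automatically; and if $u$ vanished at an interior point, the super-solution inequality would force $-\Delta u\geq u^{-\gamma}=+\infty$ there, which is impossible.

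Subtracting the sub-solution inequality from the super-solution inequality gives
\begin{equation*}
-\Delta(v-u) \;\leq\; v^{-\gamma}-u^{-\gamma}\quad\text{in }\mathcal{D}.
\end{equation*}
On $\mathcal{D}^+$ one has $0<u<v$, so by strict monotonicity $v^{-\gamma}-u^{-\gamma}<0$ there. I would then test the above inequality against $w$, which is a legitimate test function because it has trace zero on $\partial\mathcal{D}$, and integrate by parts:
\begin{equation*}
\int_{\mathcal{D}}|\nabla w|^{2}\,dx \;=\; \int_{\mathcal{D}^+}\nabla(v-u)\cdot\nabla w\,dx \;\leq\; \int_{\mathcal{D}^+}(v^{-\gamma}-u^{-\gamma})(v-u)\,dx \;\leq\; 0.
\end{equation*}
Since the left-hand side is non-negative and the right-hand side is non-positive, both vanish; combined with $w=0$ on $\partial\mathcal{D}$ this forces $w\equiv 0$ in $\mathcal{D}$, contradicting $\mathcal{D}^+\neq\emptyset$.

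In the purely classical setting where $u,v\in C^{2}(\mathcal{D})\cap C(\overline{\mathcal{D}})$ (which is the only setting actually invoked when the lemma is used elsewhere in the paper, e.g.\ in the barrier constructions of Lemma~\ref{lem. rotational} and Theorem~\ref{thm. by-product}), the argument simplifies further: the function $v-u$ would attain a positive interior maximum at some $x_{0}\in\mathcal{D}$, at which $-\Delta(v-u)(x_{0})\geq 0$ by the second-derivative test, whereas the displayed differential inequality at $x_{0}$ together with strict monotonicity yields $-\Delta(v-u)(x_{0})\leq v(x_{0})^{-\gamma}-u(x_{0})^{-\gamma}<0$, a contradiction. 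The only delicate point is the interpretation of the terms \emph{super-solution} and \emph{sub-solution}; once one fixes it (classical, distributional with $H^{1}_{0}$ test functions, or viscosity) the strict monotonicity of $s\mapsto s^{-\gamma}$ does all the work, and no further machinery is needed.
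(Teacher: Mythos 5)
Your argument is correct and is precisely the ``standard'' proof the paper alludes to: the paper's entire proof is the one-line observation that $s\mapsto s^{-\gamma}$ is decreasing, and you fill in the standard comparison argument (both the energy version and the classical interior-maximum version) that this monotonicity supports.
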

\begin{proof}
    Notice that $u^{-\gamma}$ is a decreasing function, so the proof is standard.
\end{proof}
\begin{lemma}[Interior lower bound and Harnack principle]\label{lem. review : interior lower bound}
    Assume that $u\geq0$ is a solution to \eqref{eq. main} in $B_{r}$, then
    \begin{itemize}
        \item[(1)] $\displaystyle \min_{x\in B_{r/2}}u(x)\gtrsim r^{\frac{2}{1+\gamma}}$;
        \item[(2)] $\displaystyle \min_{x\in B_{r/2}}u(x)\sim\max_{x\in B_{r/2}}u(x)$.
    \end{itemize}
\end{lemma}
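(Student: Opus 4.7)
My plan is to prove (1) first by a scaling-plus-comparison argument, and then deduce (2) as a direct consequence of (1) combined with the classical interior Harnack inequality for nonnegative solutions of $-\Delta u = f$ with $f \in L^{\infty}$.

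For (1), I rescale by setting $v(y) := r^{-2/(1+\gamma)} u(ry)$ for $y \in B_{1}$; this preserves the scale-invariant equation, so $v \geq 0$ solves $-\Delta v = v^{-\gamma}$ in $B_{1}$, and the claim reduces to the universal statement $\min_{B_{1/2}} v \gtrsim 1$. I compare $v$ from below with the canonical radial solution $W$ of the Dirichlet problem
\begin{equation*}
-\Delta W = W^{-\gamma} \text{ in } B_{1}, \qquad W = 0 \text{ on } \partial B_{1},
\end{equation*}
whose existence, positivity, and interior smoothness are classical (Fulks--Maybee \cite{FuMa60}, Crandall--Rabinowitz--Tartar \cite{CrRaTa77}). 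Being radial, smooth, and strictly positive in $B_{1}$, the function $W$ admits a uniform lower bound $c_{0}=c_{0}(n,\gamma)>0$ on $B_{1/2}$. Since $v \geq 0 = W$ on $\partial B_{1}$ and the nonlinearity $s \mapsto s^{-\gamma}$ is monotone decreasing, the maximum principle yields $v \geq W$ in $B_{1}$, hence $\min_{B_{1/2}} v \geq c_{0}$. Rescaling back gives (1).

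For (2), I invoke (1) on the rescaled $v$ above to conclude $v \gtrsim 1$ on $B_{1/2}$, so the right-hand side $v^{-\gamma}$ is \emph{bounded} on $B_{1/2}$ by a uniform constant $K=K(n,\gamma)$. Then $v \geq 0$ solves $-\Delta v = f$ with $\|f\|_{L^{\infty}(B_{1/2})} \leq K$, and the classical interior Harnack inequality (applied on the slightly smaller ball $B_{1/4}$ from $B_{1/2}$) gives
\begin{equation*}
\sup_{B_{1/8}} v \;\lesssim\; \inf_{B_{1/8}} v + K \;\lesssim\; 1.
\end{equation*}
A standard finite covering of $B_{1/2}$ by balls of radius $1/8$ centered in $B_{1/2}$, together with a chaining argument through overlapping balls, promotes this to $\sup_{B_{1/2}} v \sim \inf_{B_{1/2}} v$. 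Undoing the scaling yields $\max_{B_{r/2}} u \sim \min_{B_{r/2}} u$, which is (2).

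The only technical obstacle is the rigorous application of the comparison principle in the first step: since $s \mapsto s^{-\gamma}$ is singular at $s=0$, the comparison $v \geq W$ cannot be argued naively on all of $B_{1}$. I would handle this either by restricting to the open set $\{v < W\} \subset \{W > 0\}$, where both sides of the equation are finite and the standard maximum principle applies to $W - v$, or by regularizing $s^{-\gamma}$ to $(s+\varepsilon)^{-\gamma}$ and passing to the limit $\varepsilon \to 0^{+}$. Once this point is dispatched, both parts follow cleanly from well-known machinery.
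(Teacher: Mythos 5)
Your proof is correct in its essentials and follows the same overall structure as the paper: a barrier argument for part (1) and the linear interior Harnack inequality for part (2), applied after using (1) to bound the right-hand side $u^{-\gamma}$. The one genuine difference is the barrier itself. The paper uses the explicit, elementary subsolution
\begin{equation*}
\underline{v}(x)=\Big((2n)^{-1/\gamma}-|x|^{2}\Big)_{+},
\end{equation*}
which satisfies $-\Delta\underline{v}=2n\leq\underline{v}^{-\gamma}$ on its support and requires no existence theory at all; you instead compare with the canonical radial solution $W$ of the Dirichlet problem on $B_{1}$, which invokes Fulks--Maybee/Crandall--Rabinowitz--Tartar existence and positivity. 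Both routes work, and you correctly flag the singular-nonlinearity comparison issue and propose a workable regularization or restriction-to-$\{v<W\}$ fix. The paper's choice is more self-contained and avoids that subtlety entirely; your choice is conceptually cleaner but leans on heavier machinery.

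One small technical point you should shore up: you assert $v\geq 0=W$ on $\partial B_{1}$ and conclude $v\geq W$ in $B_{1}$, but $v$ is only a solution in the open ball $B_{1}$ with no control of its boundary trace, so a comparison argued literally on $\partial B_{1}$ is not available. The standard fix is to run the comparison on $B_{\rho}$ for $\rho\in(1/2,1)$ fixed (say $\rho=3/4$), using the solution $W_{\rho}$ of the Dirichlet problem on $B_{\rho}$ with zero data; since $v\geq 0=W_{\rho}$ on $\partial B_{\rho}$ (which is interior to $B_{1}$, where $v$ is continuous) and $W_{\rho}$ is bounded below on $B_{1/2}$ by a constant depending only on $(n,\gamma)$, you still get $\min_{B_{1/2}}v\gtrsim 1$. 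After this adjustment your argument is complete.
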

\begin{proof}
    Without loss of generality, we can assume $r=1$ using the invariant scaling
    \begin{equation*}
        \widetilde{u}(x)=r^{-\frac{2}{1+\gamma}}u(rx),\quad x\in B_{1}.
    \end{equation*}
    The lower bound estimate (1) can be obtained from the following lower barrier
    \begin{equation*}
        v=\Big((2n)^{-1/\gamma}-|x|^{2}\Big)_{+}.
    \end{equation*}
    The interior Harnack principle (2) follows from the standard one for linear equations, by noting that the lower bound estimate of $u$ yields an upper bound of the right-hand side term $u^{-\gamma}$.
\end{proof}
\begin{lemma}[Well-posedness]
    Let $\Omega$ be a bounded domain with a Lipschitz boundary. Let $\varphi\geq0$ be a continuous function on $\partial\Omega$. Then there exists a unique classical solution $u\in C^{\infty}_{loc}(\Omega)\cap C(\overline{\Omega})$ to \eqref{eq. main} with boundary data $\varphi$.
\end{lemma}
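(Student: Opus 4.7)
The plan has three components: uniqueness via the maximum principle, existence by regularizing the singular right-hand side and passing to a monotone limit, and continuity up to the boundary via explicit barriers.

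For uniqueness, if $u_1,u_2$ are two classical solutions with $u_1=u_2=\varphi$ on $\partial\Omega$, each is simultaneously a super- and a sub-solution to \eqref{eq. main}, so the comparison principle recorded in Appendix~\ref{sec. appendix: review} applied in both directions immediately forces $u_1\equiv u_2$.

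For existence, fix $\varepsilon\in(0,1)$ and consider the regularized problem
\begin{equation*}
    -\Delta u_\varepsilon=(u_\varepsilon+\varepsilon)^{-\gamma}\quad\text{in }\Omega,\qquad u_\varepsilon=\varphi+\varepsilon\quad\text{on }\partial\Omega.
\end{equation*}
The right-hand side is now bounded by $\varepsilon^{-\gamma}$ and globally Lipschitz in $u_\varepsilon$, so standard theory on the Lipschitz domain $\Omega$---for instance monotone iteration between the sub-solution $\underline{u}\equiv\varepsilon$ and the super-solution $\overline{u}=h_\varepsilon+\varepsilon^{-\gamma}w_0$, where $h_\varepsilon$ is the harmonic extension of $\varphi+\varepsilon$ and $w_0$ is the torsion function satisfying $-\Delta w_0=1$ in $\Omega$ with $w_0=0$ on $\partial\Omega$---produces a unique $u_\varepsilon\in C^{\infty}_{loc}(\Omega)\cap C(\overline{\Omega})$ with $u_\varepsilon\ge\varepsilon$. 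Comparing at two parameter values, $u_\varepsilon$ is monotone decreasing as $\varepsilon\downarrow 0$, so I would set $u:=\lim_{\varepsilon\downarrow 0}u_\varepsilon$. The interior lower bound of Lemma~\ref{lem. review : interior lower bound} gives $u>0$ in $\Omega$ independently of $\varepsilon$, and on each $\Omega'\Subset\Omega$ the right-hand sides $(u_\varepsilon+\varepsilon)^{-\gamma}$ are uniformly bounded, so interior Schauder estimates provide convergence in $C^{2,\alpha}_{loc}$ (then $C^{\infty}_{loc}$ by bootstrap), giving a classical interior solution of $-\Delta u=u^{-\gamma}$.

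The hard part, and the main obstacle, is continuity of $u$ up to $\partial\Omega$, especially at points where $\varphi$ vanishes. I would squeeze $u$ between two continuous global barriers agreeing with $\varphi$ on $\partial\Omega$. The harmonic extension $h_\varphi$ is a sub-solution because $-\Delta h_\varphi=0\le u^{-\gamma}$, so $u\ge h_\varphi$ by comparison. For the upper barrier, take $\bar{u}=h_\varphi+Kw_0^{\alpha}$ for some $\alpha\in(0,\tfrac{2}{1+\gamma})$ and $K$ large: a direct computation using $\Delta w_0=-1$ gives
\begin{equation*}
    -\Delta(w_0^{\alpha})=\alpha(1-\alpha)w_0^{\alpha-2}|\nabla w_0|^2+\alpha w_0^{\alpha-1},
\end{equation*}
whose boundary-singular $w_0^{\alpha-2}$ term strictly dominates $(K w_0^{\alpha})^{-\gamma}=K^{-\gamma}w_0^{-\alpha\gamma}$ precisely because $\alpha(1+\gamma)<2$ forces $\alpha-2<-\alpha\gamma$. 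Since $w_0\in C(\overline\Omega)$ with $w_0=0$ on $\partial\Omega$ and $h_\varphi\in C(\overline\Omega)$, the squeeze $h_\varphi\le u\le h_\varphi+Kw_0^{\alpha}$ forces $u\in C(\overline\Omega)$ with $u=\varphi$ on $\partial\Omega$. On a general Lipschitz $\Omega$ one may need to localize this barrier construction near the boundary (for example by a Perron-type argument on a smooth exhaustion of $\Omega$ and a monotone limit), and this is the principal technicality.
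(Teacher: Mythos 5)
The paper simply cites \cite[Theorem 1.1]{GuLiZh25} for this lemma, so you are supplying a proof the paper does not write out. Your overall architecture (regularize the nonlinearity, pass to a limit, squeeze between continuous barriers) is the right one, and the barrier computation using $h_\varphi+Kw_0^\alpha$ is essentially correct, but the passage to the limit has a concrete gap: the claimed monotonicity of $u_\varepsilon$ in $\varepsilon$ does not hold for the regularization you chose. You put $\varepsilon$ in two places at once --- the nonlinearity $(u_\varepsilon+\varepsilon)^{-\gamma}$ and the boundary data $\varphi+\varepsilon$ --- and for $\varepsilon_1>\varepsilon_2$ these push $u_{\varepsilon_1}$ in opposite directions (the larger datum pushes it up, the pointwise smaller reaction term pushes it down). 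Checking directly, $u_{\varepsilon_1}$ is \emph{not} a super-solution of the $\varepsilon_2$-problem and $u_{\varepsilon_2}$ is \emph{not} a sub-solution of the $\varepsilon_1$-problem, so the comparison principle yields nothing; the substitution $v_\varepsilon:=u_\varepsilon+\varepsilon$ (which solves the unregularized equation with datum $\varphi+2\varepsilon$) shows the most that comparison gives is $u_{\varepsilon_1}\geq u_{\varepsilon_2}-(\varepsilon_1-\varepsilon_2)$, strictly weaker than monotone decrease. The fix is simple: regularize only the right-hand side and keep the boundary datum $\varphi$. Then $u_\varepsilon$ is monotone \emph{increasing} as $\varepsilon\downarrow 0$, and the uniform upper bound needed for the monotone limit is precisely your barrier $h_\varphi+Kw_0^\alpha$, which one checks is a super-solution of every $\varepsilon$-problem simultaneously. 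Alternatively, keep your regularization but replace monotonicity by the $L^\infty$-contraction $0\leq v_{\varepsilon_1}-v_{\varepsilon_2}\leq 2(\varepsilon_1-\varepsilon_2)$ (the difference is nonnegative and subharmonic, hence attains its maximum on $\partial\Omega$), which yields a Cauchy sequence.

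A secondary caveat, which you already flag: on a genuinely Lipschitz domain the torsion function $w_0$ need not satisfy a Hopf-type lower gradient bound, so at corners $|\nabla w_0|$ may degenerate and $\alpha(1-\alpha)w_0^{\alpha-2}|\nabla w_0|^2$ can fail to dominate $K^{-\gamma}w_0^{-\alpha\gamma}$ where $w_0\to 0$; the companion term $\alpha w_0^{\alpha-1}$ alone has the wrong exponent to save you since $\alpha-1>-\alpha\gamma$ exactly when $\alpha<\tfrac{2}{1+\gamma}$. A smooth exhaustion plus a monotone limit is indeed the standard workaround, but it is the entire content hiding under your phrase ``the principal technicality'' and would need to be carried through explicitly for the argument to close on a general Lipschitz domain.
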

\begin{proof}
    See \cite[Theorem 1.1]{GuLiZh25}.
\end{proof}
\begin{lemma}[Another Harnack principle]\label{lem. u is small near the boundary}
    Assume that $u$ satisfies \eqref{eq. main} in $\mathcal{GC}_{2r}(0)$ and vanishes on $\Gamma$, where the boundary $\Gamma$ has $\|\Gamma\|_{C^{0,1}}\leq L$. Then,
    \begin{equation*}
        C^{-1}\max_{\mathcal{GC}_{r}(0)}u\leq u(r\vec{e_{n}})\leq C\min_{\mathcal{SC}_{r,\delta}(0)}u,
    \end{equation*}
    for some $C=C(n,L,\gamma,\delta)$.
\end{lemma}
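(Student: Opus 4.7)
The plan is first to rescale so that $r=1$. The invariant change of variables $\tilde u(x):=r^{-2/(1+\gamma)}u(rx)$ preserves the equation, the Lipschitz norm of $\Gamma$, and reduces the claim to the same statement at scale $1$, so WLOG $r=1$. The two inequalities are then handled separately: the right-hand one is a standard interior Harnack chain, while the left-hand one is a nonlinear boundary control that is the main obstacle.

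For the right-hand inequality $u(\vec{e_n}) \leq C\,\min_{\mathcal{SC}_{1,\delta}(0)} u$, both $\vec{e_n}$ and every $y\in\mathcal{SC}_{1,\delta}(0)$ lie at distance $\geq c(L,\delta)>0$ from $\Gamma$. I would connect them by a chain of $N=N(L,\delta)$ overlapping balls of radius $c/8$, all contained in $\{\mathrm{dist}(\cdot,\Gamma)\geq c/2\}\subset\mathcal{GC}_2(0)$, and apply the interior Harnack principle for this equation (Lemma~\ref{lem. review : interior lower bound}(2)) in each ball. Iterating yields the claimed bound with a constant depending on $(n,L,\gamma,\delta)$.

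For the left-hand inequality $M:=\max_{\mathcal{GC}_1(0)} u \leq C\,u(\vec{e_n})$, set $\lambda:=u(\vec{e_n})$ (so $\lambda\gtrsim 1$ by Lemma~\ref{lem. review : interior lower bound}(1)) and let $x_0$ attain the max with $d:=\mathrm{dist}(x_0,\Gamma)$. If $d\geq\delta_0$ for a small universal $\delta_0$, a Harnack chain from $x_0$ to $\vec{e_n}$ inside $\{\mathrm{dist}(\cdot,\Gamma)\geq\delta_0/2\}$ gives $M\lesssim\lambda$ as in the previous step. The genuine difficulty is the case $d<\delta_0$: the interior Harnack breaks down near $\Gamma$. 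My plan is to bound $u$ near $x_0$ by an upper barrier built from the one-dimensional profile $V_{B\lambda}$ of Lemma~\ref{lem. translation invariant solutions}. Concretely, the Lipschitz condition on $\Gamma$ with norm $L$ furnishes an interior cone $\mathcal{K}_{y_0}\subset\mathcal{GC}_2(0)$ rooted at the projection $y_0\in\Gamma$ of $x_0$, of aperture depending only on $L$. On the non-$\Gamma$ part of $\partial\mathcal{K}_{y_0}$, which sits at distance $\gtrsim 1$ from $\Gamma$, the Harnack-chain estimate already gives $u\lesssim\lambda$; on $\Gamma$, $u=0$. A function of the form $A\,V_{B\lambda}(\rho(\cdot))$, where $\rho$ is the signed distance to a support hyperplane of $\mathcal{K}_{y_0}$ at $y_0$ and $A,B$ are universal, is an exact supersolution in a half-space; after absorbing the (bounded) correction from the cone geometry by the same quadratic device used for $W_{\lambda,\pm R}$ in Lemma~\ref{lem. rotational}, it majorizes $u$ on $\partial\mathcal{K}_{y_0}$. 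The comparison principle for our operator (decreasing in $u$) then yields $M=u(x_0)\leq A V_{B\lambda}(d)\lesssim B\lambda\cdot d\lesssim\lambda$, since $V_{B\lambda}(d)\sim B\lambda\cdot d$ for $d$ small and $\lambda\gtrsim 1$.

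The main obstacle is the rigorous verification of the supersolution property on a merely Lipschitz graph: the distance function $\mathrm{dist}(\cdot,\Gamma)$ is only Lipschitz, so one must work with a smoothed interior support hyperplane rather than $\mathrm{dist}(\cdot,\Gamma)$ itself, and absorb the extra curvature term $\tfrac{n-1}{\rho}V'$ via a quadratic correction, following the strategy of Lemma~\ref{lem. rotational}. A cleaner alternative, which sidesteps barrier bookkeeping entirely, is to apply the nonlinear boundary Harnack principle of \cite[Theorem 1.4]{GuLiZh25} to $(u,U^\star)$, where $U^\star$ solves the same equation in $\mathcal{GC}_2(0)$ with boundary data $0$ on $\Gamma$ and $1$ on the rest of $\partial\mathcal{GC}_2(0)$; since $U^\star$ has universal $L^\infty$ bounds and $U^\star(\vec{e_n})\sim\|U^\star\|_{L^\infty(\mathcal{GC}_1)}\sim 1$ by interior Harnack, the ratio comparison $u(x)/u(\vec{e_n})\lesssim U^\star(x)/U^\star(\vec{e_n})\lesssim 1$ holds throughout $\mathcal{GC}_1(0)$, which is exactly the claim.
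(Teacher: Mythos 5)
The paper's own ``proof'' is a citation to \cite[Lemma 3.2]{GuLiZh25}, so there is no detailed argument to compare against; your attempt to produce a self-contained proof is therefore necessarily a different route. Your treatment of the right-hand inequality (the interior Harnack chain in $\{\mathrm{dist}(\cdot,\Gamma)\gtrsim c(L,\delta)\}$) is correct and standard. The problems are in the left-hand, Carleson-type inequality.

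The barrier argument has a genuine gap at the key step. You claim that ``the non-$\Gamma$ part of $\partial\mathcal{K}_{y_0}$ sits at distance $\gtrsim 1$ from $\Gamma$,'' but this is false: the lateral surface of the interior Lipschitz cone $\mathcal{K}_{y_0}=\{x_n-(y_0)_n>L|x'-y_0'|\}$ passes through the vertex $y_0\in\Gamma$, so near $y_0$ it lies arbitrarily close to $\Gamma$. The comparison principle in $\mathcal{K}_{y_0}$ then requires $A\,V_{B\lambda}(\rho)\geq u$ on the lateral surface, and near the vertex $\rho\to0$, so you need to know $u$ decays at least as fast as $\rho$ there. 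That decay estimate is precisely the Carleson bound being proved, so the argument is circular. The quadratic-correction device from Lemma~\ref{lem. rotational} only fixes the error in the operator caused by the cone geometry; it does nothing to supply the missing boundary comparison on the lateral surface.

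The ``cleaner alternative'' via \cite[Theorem 1.4]{GuLiZh25} is closer to workable but also has issues. First, that theorem assumes $u,v$ solve the equation in $\mathcal{GC}_{3R}(0)$ and only gives the ratio bound in $\mathcal{GC}_R(0)$. With $u$ given only in $\mathcal{GC}_{2r}(0)$, you can take $R=2r/3$ and conclude in $\mathcal{GC}_{2r/3}(0)$, which does \emph{not} cover $\mathcal{GC}_r(0)$; a covering/rescaling over small cylinders centered on $\Gamma\cap B_r$, plus interior Harnack chains for the interior part, is needed but not supplied. Second, \cite[Theorem 1.4]{GuLiZh25} is itself established in that reference using the Carleson estimate \cite[Lemma 3.2]{GuLiZh25}, i.e., exactly the statement you are trying to recover, so this route is logically inverted even though both are legitimate black boxes from the citing paper's perspective. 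Finally, note that your barrier uses $V_{B\lambda}$ from Lemma~\ref{lem. translation invariant solutions}, which requires $\gamma>1$, while the lemma as stated (and \cite[Lemma 3.2]{GuLiZh25}) is not restricted to $\gamma>1$; this is acceptable for the applications in this paper but is a further restriction you should flag.
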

\begin{proof}
    See \cite[Lemma 3.2]{GuLiZh25}.
\end{proof}
\begin{lemma}[A new boundary Harnack principle]\label{lem. Harnack Comparable, general Lipschitz domain}
    Assume that $[g(x')]_{C^{0,1}}\leq L$ and $g(0)=0$. If $u,v\geq0$ both satisfy \eqref{eq. main} in $\mathcal{GC}_{3R}(0)$ and vanish on $\Gamma$, then in $\mathcal{GC}_{R}(0)$, we have
    \begin{equation*}
        \min\Big\{\frac{\|u\|_{L^{\infty}(\mathcal{GC}_{3R}(0))}}{\|v\|_{L^{\infty}(\mathcal{GC}_{3R}(0))}},1\Big\}\lesssim\frac{u}{v}\lesssim\max\Big\{\frac{\|u\|_{L^{\infty}(\mathcal{GC}_{3R}(0))}}{\|v\|_{L^{\infty}(\mathcal{GC}_{3R}(0))}},1\Big\},
    \end{equation*}
    and
    \begin{equation*}
        \min\Big\{\frac{u(R\vec{e_{n}})}{v(R\vec{e_{n}})},1\Big\}\lesssim\frac{u}{v}\lesssim\max\Big\{\frac{u(R\vec{e_{n}})}{v(R\vec{e_{n}})},1\Big\}.
    \end{equation*}
\end{lemma}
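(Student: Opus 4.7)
The plan is to prove only the upper bound $u/v \lesssim K$ in $\mathcal{GC}_R(0)$ with $K := \max\{M_u/M_v,\,1\}$, where $M_u := \|u\|_{L^{\infty}(\mathcal{GC}_{3R}(0))}$ and $M_v$ is analogous; the lower bound follows by interchanging $u \leftrightarrow v$ (if $M_u \geq M_v$ the swap gives $v/u \lesssim 1$, hence $u/v \gtrsim 1 = \min\{M_u/M_v,1\}$, and symmetrically otherwise), while the pointwise reference-point version follows from Lemma~\ref{lem. u is small near the boundary}, which yields $u(R\vec{e_{n}}) \sim M_u$ and $v(R\vec{e_{n}}) \sim M_v$. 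The whole argument rests on one nonlinear observation: since $K \geq 1$, for every $C \geq 1$ the function $w := C K v$ is a super-solution of \eqref{eq. main}, because
\begin{equation*}
-\Delta w \;=\; CK\, v^{-\gamma} \;=\; (CK)^{1+\gamma}\, w^{-\gamma} \;\geq\; w^{-\gamma}.
\end{equation*}
Monotonicity of $t \mapsto t^{-\gamma}$ then gives $-\Delta(u-w) \leq u^{-\gamma} - w^{-\gamma} \leq 0$ on the set $\{u > w\}$, so $u-w$ is sub-harmonic there. Hence if $u \leq w$ on $\partial\mathcal{D}$ for any subdomain $\mathcal{D} \supset \mathcal{GC}_R(0)$, the maximum principle forces $u \leq w$ throughout $\mathcal{D}$. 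Choosing $\mathcal{D} = \mathcal{GC}_{2R}(0)$ reduces everything to verifying $u \leq CKv$ on $\partial\mathcal{GC}_{2R}(0) \setminus \Gamma$ with a uniform $C$.

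On the portion of $\partial\mathcal{GC}_{2R}(0) \setminus \Gamma$ at distance $\gtrsim R$ from $\Gamma$, the interior Harnack principle (Lemma~\ref{lem. review : interior lower bound}(2)) together with $u(2R\vec{e_{n}}) \sim M_u$ and $v(2R\vec{e_{n}}) \sim M_v$ gives $u/v \sim M_u/M_v \leq K$ immediately. The delicate locus is the lateral face $|y'| = 2R$ at heights $d(y) := y_n - g(y') \ll R$, where both $u$ and $v$ vanish and the ratio is a priori indeterminate. For such $y$, writing $z := (y', g(y'))$ and applying Lemma~\ref{lem. u is small near the boundary} at $z$ with the matching scale $r := d(y)/\delta$ produces \emph{in a single step} the two complementary bounds $u(y) \lesssim u(z + r\vec{e_{n}})$ (using $y \in \mathcal{GC}_r(z)$) and $v(y) \gtrsim v(z + r\vec{e_{n}})$ (using $y \in \mathcal{SC}_{r,\delta}(z)$). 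This reduces the boundary comparison at $y$ to the comparison at the lifted point $z + r\vec{e_{n}}$, whose distance to $\Gamma$ is $r$.

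The main obstacle is then controlling $u(z + r\vec{e_{n}})/v(z + r\vec{e_{n}})$ uniformly as $r \to 0$: naively iterating Lemma~\ref{lem. u is small near the boundary} at dyadic scales would accumulate a multiplicative factor that is polynomial in $R/r$ and blows up. The remedy is to decompose each solution into its harmonic envelope and particular part, $u = h_u + \psi_u$ and $v = h_v + \psi_v$, where $\Delta h_u = 0$ in $\mathcal{GC}_{3R}(0)$ with $h_u = u$ on $\partial\mathcal{GC}_{3R}(0)$, and $\psi_u := u - h_u$ satisfies $-\Delta \psi_u = u^{-\gamma}$ with zero boundary values (and analogously for $v$). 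The harmonic pieces obey the classical boundary Harnack principle (Lemma~\ref{lem. classical BHP}), hence $h_u(x)/h_v(x) \sim h_u(R\vec{e_{n}})/h_v(R\vec{e_{n}}) \lesssim M_u/M_v \leq K$. The particular pieces admit the Green-function representation $\psi_u(x) = \int_{\mathcal{GC}_{3R}(0)} G(x,y)\, u(y)^{-\gamma}\, dy$, and the universal lower bound $u \gtrsim d^{2/(1+\gamma)}$ from Lemma~\ref{lem. review : interior lower bound}(1) yields a scale-invariant upper estimate $u(y)^{-\gamma} \lesssim d(y)^{-2\gamma/(1+\gamma)}$ that is \emph{independent of $M_u$}; the analogous bound holds for $v$ with the same profile. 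Consequently $\psi_u$ and $\psi_v$ are pointwise comparable up to a uniform constant — this is precisely the origin of the ``$1$'' in $\max\{M_u/M_v, 1\}$. Combining the harmonic comparison with the particular comparison gives $u(z + r\vec{e_{n}})/v(z + r\vec{e_{n}}) \lesssim K$ for every $r \in (0, R]$, which closes the boundary estimate on $\partial\mathcal{GC}_{2R}(0) \setminus \Gamma$ and, via the nonlinear maximum principle of the first paragraph, delivers the desired inequality in $\mathcal{GC}_R(0)$.
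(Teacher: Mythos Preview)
The paper does not supply a proof here; it simply cites \cite[Theorem~1.4]{GuLiZh25}. Your reduction in the first two paragraphs --- using that $CKv$ is a super-solution for $C,K\geq1$, restricting to $\partial\mathcal{GC}_{2R}(0)\setminus\Gamma$, and lifting lateral boundary points via Lemma~\ref{lem. u is small near the boundary} --- is sound and correctly isolates the real difficulty.

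The gap is in the final decomposition $u=h_u+\psi_u$, $v=h_v+\psi_v$. Both of the asserted comparisons can fail. First, the bound $h_u(R\vec{e_n})/h_v(R\vec{e_n}) \lesssim M_u/M_v$ requires $h_v(R\vec{e_n})\gtrsim M_v$, which you never establish and which is false whenever $v$ has small (or zero) data on $\partial\mathcal{GC}_{3R}(0)\setminus\Gamma$: in the extreme case where $v$ vanishes on the entire boundary one has $h_v\equiv0$, and the boundary Harnack comparison of $h_u$ against $h_v$ is vacuous. Second, the claim that $\psi_u$ and $\psi_v$ are ``pointwise comparable'' does not follow from your argument. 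The universal lower bound $u,v\gtrsim d^{2/(1+\gamma)}$ yields only the \emph{same upper bound} $\psi_u,\psi_v\lesssim\int G(\cdot,y)\,d(y)^{-2\gamma/(1+\gamma)}\,dy$ on both; to obtain $\psi_u\lesssim\psi_v$ you would additionally need $v\lesssim d^{2/(1+\gamma)}$ near $\Gamma$, and no such scale-invariant upper bound holds for general Lipschitz $\Gamma$ --- its failure in the critical and supercritical cone regimes is precisely what drives the analysis in \cite{GuLiZh25} and in the present paper. Since these two pieces were meant to supply, respectively, the ``$M_u/M_v$'' and the ``$1$'' in $K=\max\{M_u/M_v,1\}$, the decomposition as written does not close; the harmonic and particular parts have to be handled together rather than estimated independently.
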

\begin{proof}
    See \cite[Theorem 1.4]{GuLiZh25}.
\end{proof}
\begin{lemma}[Boundary estimate in a semi-$C^{1,1}$ domain for $\gamma>1$]\label{lem. subcritical boundary estimate}
    Assume that every boundary point $p\in\Gamma\cap\{|x'|\leq3\}$ admits an exterior ball with radius $\frac{1}{K}$ below the Lipschitz graph $\Gamma=\{x_{n}=g(x')\}$. Let $u$ be a solution to \eqref{eq. main} in $\mathcal{GC}_{3}(0)$ such that $u=0$ on $\Gamma$ and $u(\vec{e_{n}})\sim1$, then we have in the case $\gamma>1$ that
    \begin{equation*}
        u(x)\sim|x_{n}-g(x')|^{\frac{2}{1+\gamma}}\sim \left(dist(x,\Gamma)\right)^{\frac{2}{1+\gamma}}\quad\mbox{in }\mathcal{GC}_{1}(0).
    \end{equation*}
\end{lemma}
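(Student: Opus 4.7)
The plan is to prove the two sides of the equivalence separately by barrier arguments, with the main effort going into the upper bound via the exterior-ball hypothesis. First I would reduce to the regime near $\Gamma$: from $u(\vec{e_{n}}) \sim 1$ together with Lemma~\ref{lem. u is small near the boundary} (and the interior Harnack principle from Lemma~\ref{lem. review : interior lower bound}(2) applied along a chain of reference points on the $x_{n}$-axis) one extracts $\|u\|_{L^{\infty}(\mathcal{GC}_{2}(0))} \sim 1$. Consequently, for $x \in \mathcal{GC}_{1}(0)$ with $d := \mathrm{dist}(x,\Gamma) \gtrsim 1$ both quantities in the claimed equivalence are comparable to $1$, so the entire content of the lemma lies in the regime $d \ll 1$.

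The lower half is essentially a one-liner. Because $\Omega = \{x_{n} > g(x')\}$ is an epigraph, the open ball $B_{d}(x)$ is automatically contained in $\Omega$, so $u$ satisfies \eqref{eq. main} on $B_{d}(x)$ and Lemma~\ref{lem. review : interior lower bound}(1) yields $u(x) \geq \min_{B_{d/2}(x)} u \gtrsim d^{2/(1+\gamma)}$. No regularity of $\Gamma$ beyond the epigraph property enters here.

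For the upper bound, let $p \in \Gamma$ be the nearest boundary point to $x$ and let $p^{*}$ be the center of the tangential exterior ball of radius $R := 1/K$ at $p$ provided by hypothesis; then $x$, $p$, $p^{*}$ are collinear and $|x - p^{*}| = R + d$, so it suffices to exhibit a radial supersolution on an annular neighborhood. I would set
\begin{equation*}
\overline{W}(y) := M \, V_{\lambda}\bigl(|y - p^{*}| - R\bigr),
\end{equation*}
with $V_{\lambda}$ the translation-invariant profile of Lemma~\ref{lem. translation invariant solutions} for some $\lambda \sim 1$ and $M$ a large uniform constant. Writing $t = |y - p^{*}| - R$ and using $V_{\lambda}'' = - V_{\lambda}^{-\gamma}$, one computes
\begin{equation*}
-\Delta \overline{W} - \overline{W}^{-\gamma} = (M - M^{-\gamma})\, V_{\lambda}^{-\gamma}(t) - \frac{n-1}{t + R}\, M\, V_{\lambda}'(t).
\end{equation*}
The first integral $(V_{\lambda}')^{2} = \tfrac{2}{\gamma-1} V_{\lambda}^{1-\gamma} + \lambda^{2}$ from Lemma~\ref{lem. translation invariant solutions} gives the key asymptotic $V_{\lambda}'(t)\, V_{\lambda}^{\gamma}(t) \sim t$ as $t \to 0^{+}$, which forces the right-hand side to be nonnegative throughout an annulus $\{R < |y - p^{*}| < R + cR\}$ for a small uniform $c$. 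Choosing $M$ large enough that $\overline{W} \geq \|u\|_{L^{\infty}(\mathcal{GC}_{2}(0))}$ on the outer spherical face --- and using $\overline{W} \geq 0 = u$ both on $\Gamma$ and on the inner face $\{|y - p^{*}| = R\}$, which touches $\overline{\Omega}$ only at $p$ --- the comparison principle yields $u(x) \leq \overline{W}(x) = M V_{\lambda}(d) \lesssim d^{2/(1+\gamma)}$. The complementary range $d \gtrsim cR$ is absorbed into the uniform $L^{\infty}$ bound since $d \sim 1$ there.

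The main obstacle is calibrating this radial barrier: the curvature-driven correction $(n-1) M V_{\lambda}'/(t + R)$ pushes the supersolution inequality in the wrong direction, and one must verify that it remains dominated by $(M - M^{-\gamma}) V_{\lambda}^{-\gamma}$ on a neighborhood of radial width uniform in $(n, L, \gamma, K)$. This reduces precisely to the asymptotic $V_{\lambda}'(t) V_{\lambda}^{\gamma}(t) \sim t$ at $t = 0$, the same mechanism that underlies Lemma~\ref{lem. rotational}. Once this is secured, the final equivalence $|x_{n} - g(x')| \sim \mathrm{dist}(x, \Gamma)$ is automatic from the uniform Lipschitz constant $L$ of $g$.
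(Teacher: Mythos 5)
Your argument is correct and is essentially the interior/exterior ball method the paper points to via \cite{GuLi93} and \cite[Theorem 1.2, Corollary 1.1]{GuLiZh25}: the interior ball together with Lemma~\ref{lem. review : interior lower bound}(1) gives the lower bound, and the radial supersolution $M\,V_\lambda(|y-p^*|-R)$ over the exterior-ball annulus gives the upper bound, with the curvature term $(n-1)MV_\lambda'/(t+R)$ controlled because $V_\lambda'(t)V_\lambda^\gamma(t)\sim t$ near $t=0$ (and $|x-p^*|-R=d$ is automatic from $B_R(p^*)\subset\Omega^c$ plus the triangle inequality, so the barrier evaluates correctly at $x$ without any collinearity hypothesis). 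Since the paper's proof is nothing but those two citations, your write-up is a valid self-contained rendering of the intended argument.
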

\begin{proof}
    See the proof in \cite{GuLi93}, or refer to  \cite[Theorem 1.2 (a)(d)]{GuLiZh25} and \cite[Corollary 1.1 (a)]{GuLiZh25}.
\end{proof}
Finally, we state a straightforward corollary of Lemma~\ref{lem. Harnack Comparable, general Lipschitz domain}.
\begin{cor}[One point control]\label{cor. nonlinear boundary harnack accurate}
    Assume that $[g(x')]_{C^{0,1}}\leq L$ and $g(0)=0$. If $u,v\geq0$ both satisfy \eqref{eq. main} in $\mathcal{GC}_{3R}(0)$ and vanish on $\Gamma$, and suppose that
    \begin{equation*}
        u(R\vec{e_{n}})\sim v(R\vec{e_{n}}),
    \end{equation*}
    then
    \begin{equation*}
        u\sim v\mbox{ in }\mathcal{GC}_{R}(0).
    \end{equation*}
\end{cor}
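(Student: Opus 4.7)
The plan is to deduce the corollary directly from the second (one-point) estimate in Lemma~\ref{lem. Harnack Comparable, general Lipschitz domain}, which compares the ratio $u/v$ on $\mathcal{GC}_{R}(0)$ to its value at the single reference point $R\vec{e_{n}}$. All hypotheses of that lemma hold verbatim under the hypotheses of the corollary: $u,v\geq0$ solve \eqref{eq. main} in $\mathcal{GC}_{3R}(0)$, both vanish on $\Gamma$, and $[g]_{C^{0,1}}\leq L$. Hence no approximation, truncation, or geometric preparation is needed, and the whole argument amounts to substituting the extra hypothesis into a bound that is already available.

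First I would unpack the assumption $u(R\vec{e_{n}})\sim v(R\vec{e_{n}})$ using Definition~\ref{def. convention of uniform and equivalence}: it provides a uniform constant $C_{0}=C_{0}(n,L,\gamma)$ such that
\begin{equation*}
    C_{0}^{-1}\leq\frac{u(R\vec{e_{n}})}{v(R\vec{e_{n}})}\leq C_{0}.
\end{equation*}
Next I would feed this quantitative two-sided bound into Lemma~\ref{lem. Harnack Comparable, general Lipschitz domain}. The lower bound factor collapses to
\begin{equation*}
    \min\Big\{\tfrac{u(R\vec{e_{n}})}{v(R\vec{e_{n}})},1\Big\}\geq\min\{C_{0}^{-1},1\}=C_{0}^{-1},
\end{equation*}
and the upper bound factor collapses to $\max\{C_{0},1\}=C_{0}$. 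Both are uniform constants, so the lemma yields $C_{0}^{-1}\lesssim u(x)/v(x)\lesssim C_{0}$ for every $x\in\mathcal{GC}_{R}(0)$, which is precisely the equivalence $u\sim v$ on $\mathcal{GC}_{R}(0)$ in the sense of Definition~\ref{def. convention of uniform and equivalence}.

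There is no substantive obstacle here; the corollary is a clean specialization of Lemma~\ref{lem. Harnack Comparable, general Lipschitz domain} to the ``balanced'' regime in which the two solutions are already comparable at one interior reference point. The only bookkeeping point worth stressing is that the constant implicit in the hypothesis $\sim$ propagates into the conclusion in a controlled way, since both $C_{0}^{-1}$ and $C_{0}$ depend only on $(n,L,\gamma)$ and not on $u$ or $v$ themselves; this is automatic from our conventions on uniform constants.
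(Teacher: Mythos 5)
Your proof is correct and takes essentially the same approach the paper intends: the paper supplies no written proof, describing the result only as a \emph{straightforward corollary} of Lemma~\ref{lem. Harnack Comparable, general Lipschitz domain}, and your argument is exactly that intended specialization — feeding the uniform two-sided bound on $u(R\vec{e_{n}})/v(R\vec{e_{n}})$ into the lemma's one-point estimate so that both the $\min$ and $\max$ factors collapse to uniform constants.
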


\noindent \textbf{Acknowledgments} 
L. Wu is partially supported by National Natural Science Foundation of China (Grant No. 12401133) and the Guangdong Basic and Applied Basic Research Foundation (2025B151502069).

\vspace{2mm}

\noindent \textbf{Conflict of interest.} The authors do not have any possible conflicts of interest.

\vspace{2mm}

\noindent \textbf{Data availability statement.}
 Data sharing is not applicable to this article, as no data sets were generated or analyzed during the current study.

\bibliographystyle{abbrv} 
\parskip=0pt
\small

\vspace{4mm}
\newpage

\noindent(L. Wu)\par\nopagebreak
\noindent\textsc{School of Mathematics, South China University of Technology, Guangzhou, 510640, P. R. China}

\noindent Email address: {\tt leyunwu@scut.edu.cn}
\vspace{3ex}

\noindent(C. Zhang)\par\nopagebreak
\noindent\textsc{School of Mathematical Sciences, Fudan University, Shanghai 200433, P. R. China}

\noindent Email address: {\tt zhangchilin@fudan.edu.cn}

\end{document}